\def\Xint#1{\mathchoice
	{\XXint\displaystyle\textstyle{#1}}%
	{\XXint\textstyle\scriptstyle{#1}}%
	{\XXint\scriptstyle\scriptscriptstyle{#1}}%
	{\XXint\scriptscriptstyle\scriptscriptstyle{#1}}%
	\!\int}
\def\XXint#1#2#3{{\setbox0=\hbox{$#1{#2#3}{\int}$ }
		\vcenter{\hbox{$#2#3$ }}\kern-.6\wd0}}
\def\dashint{\Xint-}
\newcommand{\cev}[1]{\reflectbox{\ensuremath{\vec{\reflectbox{\ensuremath{#1}}}}}}
\newlength{\dhatheight}
\renewcommand{\Re}{\operatorname{Re}}
\renewcommand{\Im}{\operatorname{Im}}
\newcommand{\osc}{\operatorname{osc}}
\DeclareMathSymbol{\intprod}{\mathbin}{MnSyC}{'270}
\newcommand{\LB}{\left[}
\newcommand{\RB}{\right]}
\newcommand{\LA}{\left\langle}
\newcommand{\RA}{\right\rangle}
\newcommand{\Z}{{\mathbb Z}}
\newcommand{\N}{{\mathbb N}}
\newcommand{\C}{{\mathbb C}}
\renewcommand{\P}{{\mathbb P}}
\newcommand{\R}{{\mathbb R}}
\newcommand{\SO}{{\mathrm{SO} }}
\newcommand{\eps}{{\varepsilon}}
\newcommand{\T}{{\mathcal T}}
\newcommand{\dist}{{\mathrm{dist}}}
\newcommand{\tr}{{\operatorname{tr}}}
\newcommand{\dbar}{{\bar{\partial}}}
\newcommand{\p}{{\partial}}
\newtheorem{thm}{Theorem}[section]
\newtheorem{lemma}[thm]{Lemma}
\newtheorem*{lemma*}{Lemma}
\newtheorem{prop}[thm]{Proposition}
\newtheorem{cor}[thm]{Corollary}
\newtheorem*{conj*}{Conjecture}
\newenvironment{claim}{\par\medskip\noindent\textit{Claim}\space}{\par\medskip}
\newenvironment{claimproof}{\par\noindent\textit{Proof of claim.}\space}{\hfill$\diamond$\medskip\par}
   \newtheoremstyle{others}
     {3pt}
     {2pt}
     {}
     {}
     {\bf}
     {.}
     {.5em}
     {}
\theoremstyle{others}
\newtheorem{rmk}[thm]{Remark}
\newtheorem*{rmk*}{Remark}
\newtheorem{defn}[thm]{Definition}
\numberwithin{equation}{section}
\begin{document}

\title{\L ojasiewicz inequalities for maps of the 2-sphere}
\author{Alex Waldron}
\address{University of Wisconsin, Madison}
\email{waldron@math.wisc.edu}

\begin{abstract}
We prove a \emph{\L ojasiewicz-Simon inequality}
\begin{equation*}
\left| E(u) - 4\pi n \right| \leq C \| \T(u) \|_{L^2}^\alpha
\end{equation*}
for maps $u \in W^{2,2}\left( S^2, S^2 \right).$ 
The inequality holds with $\alpha = 1$ in general and with $\alpha > 1$ 
unless $u$ is nearly constant on an open set. We obtain polynomial convergence of weak solutions of harmonic map flow $u(t) : S^2 \to S^2$ as $t \to \infty$ on compact domains away from the singular set, assuming that the body map is nonconstant. 
The proof uses Topping's repulsion estimates together with polynomial lower bounds on the energy density 
coming from a bubble-tree induction argument.
\end{abstract}

\maketitle

\thispagestyle{empty}

\tableofcontents

\section{Introduction}

\subsection{Background and main result}
Let $(M, g)$ and $(N,h)$ be compact Riemannian manifolds. Recall that a sufficiently regular map $v : M \to N$ is called \emph{harmonic} if its tension field
\begin{equation}\label{tensionfield}
\T(v) = \tr_g \nabla dv
\end{equation}
vanishes identically. Equivalently, $v$ is a critical point of the Dirichlet energy functional
\begin{equation*}
E(u) = \frac12 \int_M |du|^2 \, dV.
\end{equation*}
We are chiefly concerned with the fundamental case $M = N = S^2,$ although some of our results are stated more generally. 

A classical result due to Lemaire and Wood \cite[(11.5)]{eellslemairereport} states that any harmonic map $v : S^2 \to S^2$ must either be holomorphic or antiholomorphic; hence
\begin{equation}\label{4pideg}
E(v) = 4 \pi | \deg(v) |
\end{equation}
and $v$ attains the minimum allowable energy within its homotopy class. The proof relies on a famous trick: 
since the Hopf differential (a certain quadratic expression in the components of $du$) of a harmonic map is holomorphic, it necessarily vanishes, forcing the map $v$ to be holomorphic or antiholomorphic. 

It is natural to ask whether a quantitative version of this result holds true; specifically, whether the energy of a map with small tension must be close to a predetermined value. In his thesis \cite{toppingrigidity}, Topping obtained a remarkable first result in this direction: any map $u : S^2 \to S^2$ with
\begin{equation}\label{Toppingslowenergy}
E(u) < 4 \pi |\deg(u)| + \delta_0,
\end{equation}
must obey the estimate 
\begin{equation}\label{Toppingsfirstestimate}
E(u) \leq  4 \pi |\deg(u) | + C \| \T(u) \|^2,
\end{equation}
where $\|\cdot \| = \| \cdot \|_{L^2}.$
For initial data satisfying (\ref{Toppingslowenergy}), Topping used (\ref{Toppingsfirstestimate}) to prove that 
Struwe's weak solution of the harmonic map flow converges exponentially away from a finite set of singular points.

Without the assumption (\ref{Toppingslowenergy}), one can no longer expect 
a map with low tension to be nearly holomorphic; instead, one might expect such a map to be close to a \emph{sum} of holomorphic and anti-holomorphic maps. This intuition was borne out by the work of Ding-Tian \cite{dingtian}, Qing \cite{qinghmfsphere}, and Wang \cite{wangchangyoububblephenomena}, establishing that a sequence of maps $u_i$ 
with $\| \T(u_i) \| \to 0$ must subconverge to a \emph{bubble tree} of harmonic maps. The reader may see Theorem \ref{thm:bubbletree} of the present paper for a detailed statement of this theory, or recent work of Jendrej, Lawrie, and Schlag \cite{jendrejlawrieschlag} for more precise results in the parabolic setting.

In fact, the energies $E(u_i)$ converge to the sum of the energies of the maps in the tree, which by (\ref{4pideg}), is an integer multiple of $4 \pi:$
\begin{equation*}
E(u_i) \to 4 \pi n.
\end{equation*}
Another remarkable result of Topping \cite{toppingannals} generalizes the estimate (\ref{Toppingsfirstestimate}) to this setting: under certain assumptions on the nature of the bubble tree (described below), 
we have
\begin{equation}\label{Toppingssecondestimate}
| E(u_i) - 4 \pi n | \leq C' \| \T(u_i)\|^2.
\end{equation}
Topping was able to use (\ref{Toppingssecondestimate}), which he calls a ``quantization estimate,'' to again obtain exponential convergence of harmonic map flow on compact domains away from the singular set. 

The mechanism of Topping's second theorem is more complex than the first, incorporating a ``repulsion'' effect between the body map, assumed to be holomorphic, and any antiholomorphic bubbles---in other words, a quantification of the Hopf-differential trick.
To ensure a sufficiently strong repulsion effect, Topping makes the following assumptions: (1) holomorphic and antiholomorphic bubbles do not occur at the same points, and (2) the energy density of the body map does not vanish at any of the antiholomorphic bubble points---see \cite{toppingannals} for precise statements. As shown later \cite{toppingzeroenergy}, these hypotheses are necessary for an estimate of the form (\ref{Toppingssecondestimate}) to hold.

However, if 
one settles for a weaker estimate in place of (\ref{Toppingssecondestimate}), namely, a \emph{\L ojasiewicz(-Simon) inequality}
\begin{equation}\label{introloj}
| E(u) - 4 \pi n | \leq C \| \T(u) \|^\alpha,
\end{equation}
then it turns out that Topping's assumptions can be removed. 
We shall prove (\ref{introloj}) with $\alpha = 1$ in full generality and with $\alpha > 1$ assuming only that the body map is not identically constant. For a solution of harmonic map flow $u(t) : S^2 \to S^2,$ this allows us to conclude the following: if the weak limit along a given sequence of times tending to infinity is nonconstant, then $u(t)$ must converge polynomially 
on compact domains away from the singular set. In particular, weak subsequential limits are unique in this case. 

\subsection{Related work} Our main theorem can be compared with \L ojasiewicz inequalities and uniqueness-of-subsequential-limit results that have appeared in similar contexts.

Assuming that $u$ is $C^{2,\alpha}$-close to a fixed harmonic map, (\ref{introloj}) follows directly from fundamental work of Leon Simon \cite{leonsimon}. 
Historically, extensions of Simon's theorem to singular contexts have been exceedingly rare, Topping's theorems \cite{toppingrigidity, toppingannals} being the best and perhaps earliest examples. Another instance is the work of Daskalopoulos and Wentworth \cite{daskwentconvergence, daskwentblowup} (generalized by Sibley \cite{sibleythesiscrelle} and Sibley-Wentworth \cite{sibleywentworth}) establishing convergence modulo gauge and uniqueness of the bubbling set for Hermitian Yang-Mills flow on holomorphic bundles over compact K\"ahler manifolds. 
In his thesis \cite[Theorem 3.7-Corollary 3.8]{instantons}, the author also gave an infinite-time exponential convergence result for 4D Yang-Mills flow analogous to Topping's first convergence theorem, although in that context the bubbling set turned out to be empty.

More recently, Malchiodi, Rupflin, and Sharp \cite{malchiodirupflinsharplojasiewicz} obtained a \L ojasiewicz inequality for the $H$-functional that applies near ``simple bubble trees,'' consisting of a single bubble on a constant body map (assuming that the domain is a surface of genus at least one). Rupflin \cite{rupflinhmlojasiewicz} adapted this technique to the Dirichlet energy of harmonic maps and succeeded to work with simple bubble trees mapping into a general real-analytic target. 
Meanwhile, Rupflin \cite{rupflinlowenergylevels} has also obtained \L ojasiewicz inequalities for more general low-energy maps under realistic assumptions, including 
\cite[Corollary 1.6]{rupflinlowenergylevels}, which is a direct precedent of our estimate (\ref{introloj}).
(For recent progress on the related question of closeness of $u$ to a genuine holomorphic map under the hypothesis (\ref{Toppingslowenergy}), see Bernand-Mantel, Muratov, and Simon \cite{bernandmantelrigidity}, Topping \cite{toppingdegreeonerigidity}, and Rupflin \cite{rupflinquantitativerigidity}.)

A related trend initiated by Colding and Minicozzi \cite{coldingminicozzigenericI} aims to develop \L ojasiewicz-type estimates suitable for proving uniqueness of blowup limits at ``generic'' singularities. These estimates are designed for analyzing type-I singularities of the mean curvature flow and have been remarkably successful on that front \cite{coldingminicozziuniquenesslojasiewicz, coldingminicozzisingularset, chodoshchoischulzegenericmcf, chodoshchoischulzegenericmcfII, chodoshmantoulidisschulzegenericlowentropyII}. Lotay, Schulze, and Szekelyhidi \cite{lotayschulzeszekelyhidilagrangianmcfuniqueness} have also obtained a strong uniqueness result for certain singularities of Lagrangian mean curvature flow, which are of type II. 
As we shall demonstrate in forthcoming work, a version of the inequality (\ref{introloj}) can also be used to analyze (type-II) finite-time singularities of 2D harmonic map flow. 

Finally, we mention the recent \L ojasiewicz inequality of Deruelle-Ozuch \cite{deruelleozuchlojasiewicz} for a version of Perelman's $\lambda$-functional near a Ricci-flat ALE space, which has applications to infinite-time convergence of Ricci-DeTurck flow \cite{deruelleozuchdynamicalinstability}.

\subsection{\L ojasiewicz inequalities} 
The simplest version of our main result is as follows.

\begin{thm}\label{thm:loj} Let $k \in \N.$ The following are true of any map $u \in W^{2,2}( S^2, S^2)$ with $E(u) \leq 4 \pi k.$ 

\vspace{2mm}

\noindent (a) 
There exists an integer $n \in \{ 0, \ldots, k\}$ such that 
\begin{equation*}\label{loj:a}
\left| E(u) - 4 \pi n \right| \leq C_{\ref{thm:loj}a} \|\T(u)\|.
\end{equation*}
Here $\|\cdot \| = \|\cdot\|_{L^2(S^2)}$ and $C_{\ref{thm:loj}a}$ depends only on $k.$

\vspace{2mm}

\noindent (b) 
There exist $L \in \N$ and $\eps_0 > 0,$ depending only on $k,$ as follows; fix any $1 < \alpha < \frac{2L + 2}{2L + 1}$ and $0 < \kappa < \eps_0.$
Suppose that there exist open sets $\Gamma \Subset \hat{\Gamma} \subset S^2$ such that 
\begin{equation}\label{loj:Gammahatassumption}
E(u, \hat{\Gamma}) < \eps_0
\end{equation}
and
\begin{equation}\label{loj:Gammaassumption}
E(u, \Gamma) \geq \kappa.
\end{equation}
Then $u$ obeys the \L ojasiewicz inequality
\begin{equation*}\label{loj:bestimate}
\left| E(u) - 4 \pi n \right| \leq C_{\ref{thm:loj}b} \| \T(u) \|^{\alpha }.
\end{equation*}
Here $C_{\ref{thm:loj}b}$ depends on $k, \kappa, \alpha,$ and the geometry of $\Gamma$ and $\hat{\Gamma}.$

\end{thm}

We have also the following more quantitative versions. Given $x \in S^2$ and $0 < r < \infty,$ let $D_r(x)$ denote the image in $S^2$ of the disk of radius $r$ in the stereographic coordinate chart centered at $x,$ i.e.
\begin{equation*}
D_r(x) = B_{2 \arctan(r)}(x).
\end{equation*}
We shall also write
\begin{equation*}
U^{\rho_2}_{\rho_1}(x) = D_{\rho_2}(x) \setminus \overline{ D_{\rho_1}(x) }.
\end{equation*}
Fix $\ell \in \{0, \ldots, k\}.$
Let $x_i \in S^2$ and $0 < \lambda_i \leq \frac{1}{\sqrt{\ell} },$ for $i = 1, \ldots, \ell,$ be such that $D_{2\lambda_i}(x_i) \cap D_{2 \lambda_j} (x_j) = \emptyset$ for $i \neq j,$ and write
$$\Omega = S^2 \setminus \cup_i D_{\lambda_i}(x_i), \qquad U_i = U_{\lambda_i}^{2\lambda_i}(x_i) , \qquad \hat{U}_i = U_{\lambda_i/2}^{4\lambda_i}(x_i).$$
The next result is a refined version of Theorem \ref{thm:loj}$a.$

\begin{thm}\label{thm:quantitativelambdaloj} Fix $\beta, \kappa > 0.$ Suppose that $u \in W^{2,2}(S^2, S^2)$ satisfies $E(u) \leq 4\pi k$ as well as
\begin{equation}\label{quantitativeloj:edbarsmall}
\sum_{i = 1}^\ell E \left( u, \hat{U}_i \right) +  E_{\dbar}\left(u, \Omega \right) < \eps_0
\end{equation}
and
\begin{equation}\label{quantitativeloj:aassumption}
E_{\p}\left( u, U_i \right) \geq \kappa E_{\dbar} \left(u, U_i \right) 
\end{equation}
for $i = 1, \ldots, \ell.$ 
Then there exists $n \in \{0, \ldots, k\}$ such that
\begin{equation*}
 \left| E(u) - 4 \pi n \right| \leq C_{\ref{thm:quantitativelambdaloj}} \left( \| \T(u) \|^2 + \lambda^{1 - \beta} \| \T(u) \| \right).
\end{equation*}
Here $\lambda = \max \lambda_i$ and $C_{\ref{thm:quantitativelambdaloj}}$ depends on $k, \kappa,$ and $\beta.$

A similar result holds after reversing the roles of $E_\p$ and $E_\dbar.$
\end{thm}

The following is a local version of the last theorem.

\begin{cor}\label{cor:localonebubbleloj}
Let $B_{2} \subset \R^2$ denote the ball of radius two centered at the origin and let $0 \leq \lambda \leq 1.$ Suppose that $u \in W^{2,2}(B_2, S^2)$ satisfies $E(u) \leq 4\pi k$ as well as
\begin{equation}
E(u, B _2 \setminus B_{\lambda}) < \eps_0.
\end{equation}
Then there exists $n \in \{0, \ldots, k\}$ such that
\begin{equation*}
 \left| E(u) - 4 \pi n \right| \leq C_{\ref{cor:localonebubbleloj} } \left( \| \T(u) \|_{L^2(B_2)}^2 + E(u, B_{2} \setminus B_1) + \lambda^{1 - \beta} \left( \| \T(u) \|_{L^2(B_2)} + \sqrt{E(u, B_{2} \setminus B_1)} \right) \right).
\end{equation*}
Here $C_{\ref{cor:localonebubbleloj}}$ depends only on $k$ and $\beta.$
\end{cor}

The next result is a version of Theorem \ref{thm:loj}$b$ in which the domains $\cup U_i \Subset \cup \hat{U}_i$ play the role of $\Gamma \Subset \hat{\Gamma}.$ 

\begin{thm}\label{thm:quantitativealphaloj} 
If $u \in W^{2,2}(S^2, S^2)$ satisfies $E(u) \leq 4\pi k,$  (\ref{quantitativeloj:edbarsmall}), and
\begin{equation}\label{quantitativealphaloj:assumption}
\min_i E_{\p} \left( u, U_i \right) \geq \kappa
\end{equation}
for $i = 1, \ldots, \ell,$
then $u$ obeys the \L ojasiewicz inequality
\begin{equation*}\label{quantitativealphaloj:estimate}
\left| E(u) - 4 \pi n \right| \leq C_{\ref{thm:quantitativealphaloj}} \| \T(u) \|^{\alpha }.
\end{equation*}
Here $C_{\ref{thm:quantitativealphaloj}}$ depends on $k,$ $\kappa,$ and $\alpha$ as given in Theorem \ref{thm:loj}$b.$

A similar result holds after reversing the roles of $E_\p$ and $E_\dbar.$
\end{thm}

Theorems \ref{thm:loj}-\ref{thm:quantitativealphaloj} will all follow by contradiction from Theorem \ref{thm:sequenceloj} below, which states the same estimates in the context of sequences of maps with tension fields tending to zero in $L^2$ (so-called ``almost-harmonic sequences'').



\vspace{2mm}

\subsection{Applications to harmonic map flow} We can now state our harmonic-map-flow results.

\begin{thm}\label{thm:flowconv} 
There exists $\delta_0 > 0,$ depending only on $k, \kappa,$ $\alpha,$ and $\min \{\lambda_i\},$ as follows. 

Let $u_0 \in W^{1,2}(S^2, S^2)$ be a map 
satisfying the assumptions of Theorem \ref{thm:quantitativealphaloj}, 
and suppose further that
$$\dist \left( E(u_0) , 4\pi \Z \right) \leq \delta$$
for some $0 < \delta \leq \delta_0.$
Let $u(t)$ be the Struwe solution of harmonic map flow with $u(0) = u_0,$ and let $0 < T \leq \infty$ be the maximal time such that
\begin{equation}\label{mainthm:Tdef}
\dist \left( E(u(t) ) , 4\pi \Z \right) < \delta
\end{equation}
for $0 < t < T.$

\vspace{2mm}

\noindent (a) We have
\begin{equation}
\dist \left( E(u(t)) , 4\pi \Z \right) \leq \left( \delta^{ \frac{\alpha - 2}{\alpha} } + \frac{2 - \alpha}{\alpha}  \min \LB t, T - t \RB \right)^{\frac{\alpha}{\alpha - 2}},
\end{equation}
\begin{equation}\int_{t}^{T-t} \| \T(u(s)) \|_{L^2(S^2)} \, ds \leq \frac{\alpha}{\alpha - 1} \left( \delta^{ \frac{\alpha - 2}{\alpha} } + \frac{2 - \alpha}{\alpha}  t \right)^{\frac{\alpha - 1}{\alpha - 2}},
\end{equation}
and
\begin{equation}
\| u(t) - u_0 \|_{L^2(S^2)} \leq \frac{\alpha}{\alpha - 1} \delta^{\frac{\alpha - 1}{\alpha}}
\end{equation}
for $0 \leq t < T.$

\vspace{2mm}

\noindent (b) If $T = \infty,$ then there exists a nonconstant holomorphic map $u_\infty: S^2 \to S^2$ such that
\begin{equation}\label{L2convergence}
\| u(t) - u_\infty \|_{L^2(S^2)} \leq \frac{\alpha}{\alpha - 1} \left( \delta^{\frac{\alpha - 2}{\alpha} } + \frac{2 - \alpha}{\alpha}  t \right)^{\frac{\alpha - 1}{\alpha - 2}}. 
\end{equation}
Moreover, there exists a finite set of points $z_j \in S^2$ such that given any domain $\Gamma \Subset S^2 \setminus \{z_j\}$ and $m \in \N,$ there exists $D > 0$ such that
\begin{equation}\label{Ckconvergence}
\| u(t) - u_\infty \|_{C^m (\Gamma)} < D t^{\frac{ \alpha - 1}{\alpha - 2}}
\end{equation}
for $t$ sufficiently large.

\end{thm}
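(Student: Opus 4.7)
The harmonic map flow is the $L^2$-gradient flow of $E,$ so $\partial_t u = \T(u)$ and $\frac{d}{dt} E(u(t)) = -\|\T(u(t))\|_{L^2}^2.$ The plan is to combine this energy identity with the \L ojasiewicz-Simon inequality of Theorem \ref{thm:quantitativeloj}$b$ via a continuity argument, then translate the resulting ODE bounds into the claimed decay estimates. Let $n$ be the integer with $|E(u_0) - 4\pi n| \leq \delta$ and set $f(t) = |E(u(t)) - 4\pi n|.$ Granted that Theorem \ref{thm:quantitativeloj}$b$ applies to $u(t)$ for all $t \in [0, T),$ we have $f(t) \leq C \|\T(u(t))\|^\alpha,$ and combining with the energy identity gives
$$|f'(t)| = \|\T(u(t))\|_{L^2}^2 \geq C^{-2/\alpha} f(t)^{2/\alpha}.$$
Since $1 < 2/\alpha < 2,$ elementary ODE comparison integrated forward from $t = 0$ (where $E(u_0)$ may lie above $4\pi n$) and backward from $t = T$ (where, if $T < \infty,$ the boundary value $f(T) = \delta$ applies once $E$ has dropped below $4\pi n$) yields the $\min[t, T-t]$-type decay in part $(a).$ Differentiating $f^{(\alpha-1)/\alpha}$ and invoking \L ojasiewicz a second time produces $\frac{d}{dt}\bigl(-f^{(\alpha-1)/\alpha}\bigr) \geq c' \|\T(u(t))\|_{L^2},$ which integrates to the stated $L^1$-in-time tension bound; the $L^2$-in-space displacement bound then follows from $\|u(t) - u_0\|_{L^2(S^2)} \leq \int_0^t \|\T(u(s))\|_{L^2} \, ds.$

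The essential bootstrap is to verify that the hypotheses (\ref{quantitativeloj:edbarsmall}) and (\ref{quantitativeloj:bassumption}) of Theorem \ref{thm:quantitativeloj}$b$ persist along the flow, perhaps with mildly degraded constants such as $2 \eps_0$ and $\kappa/2.$ For this I would use local energy estimates for the harmonic map flow (in the spirit of Struwe or Topping) which control the time derivatives of $E(u, \hat{U}_i),$ $E_{\dbar}(u, \Omega),$ and $E_{\p}(u, U_i)$ by $\|\T(u)\|_{L^2}$ weighted against local gradient terms. Since the preceding paragraph supplies $\int_0^T \|\T(u(s))\|_{L^2} \, ds \leq c\,\delta^{(\alpha-1)/\alpha},$ choosing $\delta_0$ small enough (in terms of $k,\ \kappa,\ \alpha,$ and $\min \lambda_i$) ensures that these local quantities drift by less than half of the relevant thresholds over $[0, T),$ closing the continuity argument. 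This I expect to be the main technical obstacle: the monotonicity of total energy is not enough, as it offers no separate grip on $E_{\p}$ versus $E_{\dbar}$ or on the balance between the $U_i$ and $\Omega,$ so a careful local-energy analysis and choice of cutoff functions will be required.

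For part $(b),$ the case $T = \infty$ gives $\int_0^\infty \|\T(u(s))\|_{L^2}\, ds < \infty,$ so $u(t)$ is Cauchy in $L^2(S^2)$ and converges to some $u_\infty;$ the displacement estimate from part $(a)$ directly yields the rate in (\ref{L2convergence}). Standard $\eps$-regularity theory for the harmonic map flow produces a finite bubbling set $\{z_j\} \subset S^2$ and uniform $C^m$ bounds for $u(t)$ on any $\Gamma \Subset S^2 \setminus \{z_j\};$ interpolating the $L^2(S^2)$ decay rate against the $C^{m+1}(\Gamma)$ bound via routine parabolic-regularity inequalities produces the polynomial $C^m$ convergence (\ref{Ckconvergence}). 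The limit $u_\infty$ is a smooth harmonic map $S^2 \to S^2,$ hence holomorphic or antiholomorphic by the Lemaire-Wood theorem; the persistent lower bound $E_{\p}(u(t), U_i) \geq \kappa/2$ from the bootstrap passes to the limit on any $U_i$ disjoint from $\{z_j\},$ forcing $u_\infty$ to be holomorphic and nonconstant.
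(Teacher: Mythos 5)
Your proposal follows essentially the same route as the paper: a continuity argument in which the hypotheses of Theorem \ref{thm:quantitativeloj}$b$ are propagated with degraded thresholds ($2\eps_0$, $\kappa/2$) via the standard local energy identity, whose drift is controlled by $\int \|\T(u(s))\|\,ds$, which in turn is bounded by differentiating $\Delta(t)^{(\alpha-1)/\alpha}$ using the \L ojasiewicz inequality and the energy identity; the ODE comparison, the $L^2$ displacement bound, and the part $(b)$ argument (Cauchy in $L^2$, $\eps$-regularity away from the bubbling set, holomorphy and nonconstancy of $u_\infty$ from the persistent lower bound on $E_\p$) all match the paper's proof. The step you flagged as the main obstacle — separate control of $E_\p$, $E_\dbar$, and the local energies — is handled in the paper by exactly the localized energy identity you describe, cited as standard.
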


\begin{cor}\label{cor:flowconv} Let $t_i$ be any sequence of times tending to infinity. Given a Struwe solution $u(t) : S^2 \to S^2$ of harmonic map flow with $E(u(0)) \leq 4 \pi k,$ pass to any subsequence such that $u(t_i) \rightharpoonup u_\infty$ weakly in $W^{1,2}.$ 
If $u_\infty$ is nonconstant, then $u(t) \to u_\infty$ polynomially in $L^2(S^2),$ in the sense of (\ref{L2convergence}), and in $C^\infty$ on compact domains away from the bubbling set, in the sense of (\ref{Ckconvergence}).
\end{cor}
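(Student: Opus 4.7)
The strategy is to apply Theorem \ref{thm:flowconv}(b) to the flow restarted at a sufficiently large time $t_i.$ The bubble-tree picture along the convergent subsequence $u(t_i) \to u_\infty$ will supply the geometric data needed to verify the hypotheses. By the Lemaire--Wood theorem, the nonconstant harmonic limit $u_\infty$ is either holomorphic or antiholomorphic; I assume for definiteness that it is holomorphic (the antiholomorphic case is handled by the symmetric version of Theorem \ref{thm:quantitativeloj}).

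First I would invoke Theorem \ref{thm:bubbletree} to identify the finite set of bubble points $\{x_1,\ldots,x_\ell\} \subset S^2$ and the limiting integer energy level $4\pi n := E(u_\infty) + \sum_{j,k} E(\mathrm{bubble}_{j,k}) = \lim_{i\to\infty} E(u(t_i)).$ Next I would fix small radii $\lambda_j > 0$ so that the balls $B_{4\lambda_j}(x_j)$ are pairwise disjoint, $E_\p(u_\infty, U_{\lambda_j}^{2\lambda_j}(x_j)) \geq 3\kappa$ for some $\kappa > 0,$ and $E(u_\infty, U_{\lambda_j/2}^{4\lambda_j}(x_j)) < \eps_0/2.$ These choices are possible because $u_\infty$ is nonconstant holomorphic, so $du_\infty$ has only isolated zeros and every sufficiently small annulus around $x_j$ carries strictly positive $E_\p,$ while the total energy on each shell tends to zero as its outer radius shrinks. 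Since $u(t_i) \to u_\infty$ smoothly on compact subsets of $S^2 \setminus \{x_j\},$ for $i$ large the restrictions $E_\p(u(t_i),U_j),$ $E_\dbar(u(t_i),\Omega),$ and $E(u(t_i),\hat{U}_j)$ will satisfy (\ref{quantitativeloj:edbarsmall}) and (\ref{quantitativeloj:bassumption}) with the chosen $\kappa,$ so $u(t_i)$ meets the hypotheses of Theorem \ref{thm:flowconv}(b).

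The main subtlety is verifying that the maximal time $T$ for the restarted flow equals $\infty.$ For this I would exploit two facts: $E(u(t))$ is non-increasing along harmonic map flow, forcing $\lim_{t\to\infty} E(u(t)) = \lim_i E(u(t_i)) = 4\pi n,$ and hence $E(u(t)) \geq 4\pi n$ for all $t;$ and by taking $i$ large, $\delta := E(u(t_i)) - 4\pi n$ can be made smaller than any prescribed $\delta_0.$ Together these give $\dist(E(u(t)),4\pi\Z) \leq E(u(t)) - 4\pi n \leq \delta < \delta_0$ for every $t \geq t_i,$ so (\ref{mainthm:Tdef}) is never saturated and $T = \infty.$ Theorem \ref{thm:flowconv}(b) then delivers polynomial $L^2$-convergence and smooth convergence on compact domains away from a finite set $\{z_j\}$ to some nonconstant holomorphic $u_\infty'.$ The hypothesis $u(t_i) \to u_\infty$ forces $u_\infty' = u_\infty,$ and since $C^\infty_{\mathrm{loc}}$-convergence off $\{z_j\}$ is incompatible with energy concentrating elsewhere along $t_i,$ we have $\{z_j\} = \{x_j\},$ completing the proof.
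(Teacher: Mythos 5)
Your proposal is correct and follows essentially the same route as the paper: fix annuli around the bubbling set of the subsequence, use strong convergence of $u(t_i)$ to $u_\infty$ away from the bubble points to verify (\ref{quantitativeloj:edbarsmall}) and (\ref{quantitativeloj:bassumption}), and apply Theorem \ref{thm:flowconv}$b$ with initial data $u(t_i)$ for $i$ large. Your explicit verification that $T=\infty$ (via monotonicity of $E$ and the energy identity forcing $E(u(t))\downarrow 4\pi n$) and the identification of the limit are points the paper leaves implicit, but the argument is the same.
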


\vspace{2mm}

\subsection{Idea of proof} We follow a stategy of decomposing an almost-harmonic map into an alternating sum of almost-holomorphic and almost-antiholomorphic maps. The basic estimate of Topping's first theorem can be applied on each map, modulo a boundary term; the goal is to make sure that these boundary terms are taken on annuli where the holomorphic and antiholomorphic energies are comparable and bounded below by a power of the radius. Our analysis starts from the observation that in this case, the boundary term can be controlled using Topping's repulsion estimate, giving precisely a (pre-)\L ojasiewicz inequality. 

The main difficulty in exploiting this observation
is to obtain the required lower (and upper) bounds on the energy densities 
in the neck regions.
These ultimately follow from a basic three-annulus estimate and its generalization to a ``multi-annulus estimate,'' needed to pass the lower bounds across ghost bubbles; 
however, a tricky inductive argument over the bubble tree is required to put these together.

\vspace{2mm}

\subsection{Acknowledgement} The author was partially suppored by NSF DMS-2004661 during the preparation of this article.

\vspace{10mm}

\section{Preliminaries}

In \S \ref{ss:mapstoKahler}, we recall the K\"ahler formalism for harmonic maps. In \S \ref{ss:sphere}, we recall the explicit formulae for the quantities defined in \S \ref{ss:mapstoKahler} in the case $\Sigma = N = S^2,$ viewed as the unit sphere inside $\R^3.$ 
For the purposes of understanding our main results, the reader may feel free to skip \S \ref{ss:mapstoKahler} and refer only to the formulae in \S \ref{ss:sphere}.

\subsection{Harmonic maps to K\"ahler manifolds}\label{ss:mapstoKahler} Let $\Sigma$ be a Riemannian surface and $N$ a compact K\"ahler manifold of complex dimension $n.$ The complexified tangent bundles decompose according to type:
\begin{equation}\label{complexsplitting}
\begin{split}
T\Sigma^\C = T^{1,0}\Sigma \oplus T^{0,1} \Sigma, \qquad TN^\C  = T^{1,0} N \oplus T^{0,1} N.
\end{split}
\end{equation}
Given a local holomorphic coordinate $z$ on $\Sigma,$ we have the local coordinate frames
\begin{equation*}
\frac{\p}{\p z} = \frac12 \left( \frac{\p}{\p x} - i \frac{\p}{\p y} \right), \qquad \frac{\p}{\p \bar{z}} = \frac12 \left( \frac{\p}{\p x} + i \frac{\p}{\p y} \right)
\end{equation*}
for $T^{1,0} \Sigma$ and $T^{0,1} \Sigma,$ respectively, as well as the dual frames
\begin{equation*}
dz = dx + i dy, \qquad d\bar{z} = dx - idy
\end{equation*}
for $T^*{}^{1,0} \Sigma$ and $T^*{}^{0,1} \Sigma,$ respectively.
Given local holomorphic coordinates $w^\alpha,$ $\alpha = 1, \ldots, n,$ on $N,$ we have similar formulae for the frames
$\frac{\p}{\p w^\alpha},$ $\frac{\p}{\p \bar{w}^\alpha},$ $dw^\alpha,$ and $d \bar{w}^\alpha.$

The metric tensor $g$ on $\Sigma$ may be extended complex-linearly to $TN^\C.$ We obtain a hermitian metric by the formula
\begin{equation*}
\LA v,w \RA = g(v, \bar{w}),
\end{equation*}
which agrees with the real-valued metric on $TN \subset TN^\C.$ If $z$ is a local coordinate, we may let
\begin{equation*}
\sigma(z) = 
\sqrt{2 g \left( \frac{\p}{\p z}, \frac{\p}{\p \bar{z}} \right)}.
\end{equation*}
The metric tensor and K\"ahler form on $\Sigma$ are given locally by
\begin{equation*}
g = \frac{ 1 }{2} \sigma^2 \left( dz \otimes d \bar{z} + d \bar{z} \otimes dz \right), \qquad \omega_\Sigma = \frac{i}{2} \sigma^2 dz \wedge d \bar{z}.
\end{equation*}
Similarly, we may write
\begin{equation*}
h_{\alpha \bar{\beta}} = h \left( \frac{\p}{\p w^\alpha}, \frac{\p}{\p \bar{w}^\beta} \right),
\end{equation*}
which is a Hermitian matrix. The K\"ahler form on $N$ then has the local expression
\begin{equation*}
\omega_N = i h_{\alpha \bar{\beta}} dw^\alpha \wedge d\bar{w}^\beta.
\end{equation*}
See e.g. \cite[\S 2.4]{songwaldron}, for more details.

Now, given a map $u : \Sigma \to N,$ its differential $du$ is naturally a section of $\mathcal{E} = T^*M \otimes u^* TN.$ The complexification $\mathcal{E}^\C = T^*M^\C \otimes_\C \left( u^*TN \right)^\C$ decomposes into four factors corresponding to the direct sum decompositions (\ref{complexsplitting}). The components of $du$ under this splitting can be written schematically as
\begin{equation*}
du = \left( \begin{array}{cc}
\p u & \dbar u \\[2mm]
\overline{\dbar u} & \overline{\p u} \end{array} \right),
\end{equation*}
where in local coordinates, we have
\begin{equation*}
\p u = w_z^\alpha dz \otimes \frac{\p}{\p w^\alpha}, \qquad \dbar u = w^\alpha_{\bar{z} } d\bar{z} \otimes \frac{\p}{\p w^\alpha}. 
\end{equation*}
Here we have written
$w^\alpha_z = \frac{\p w^\alpha(u(z))}{\p z},$ etc.
The Dirichlet energy density of $u,$
\begin{equation*}
e(u) = \frac12 |du|^2, 
\end{equation*}
decomposes under the above splitting as
\begin{equation*}
\begin{split}
e(u) & = \frac12 \left( |\p u|^2 + | \dbar u|^2 + |\overline{\p u}|^2 + |\overline{\dbar u} |^2 \right) \\
& = |\p u|^2 + | \dbar u|^2 \\
& =: e_\p(u) + e_\dbar(u).
\end{split}
\end{equation*}
Letting
$$E_\p(u) = \int_\Sigma e_\p(u) \, dV, \qquad E_\dbar(u) = \int_\Sigma e_{\dbar}(u) \, dV,$$
the Dirichlet energy splits as
\begin{equation*}
E(u) = E_\p(u) + E_\dbar(u).
\end{equation*}
Meanwhile, a brief calculation shows that
\begin{equation*}
u^* \omega_N = \left( e_\p(u) - e_\dbar(u) \right) \omega_{\Sigma}.
\end{equation*}
Since $d \omega_N = 0,$ the pullback is again closed and its integral is invariant under homotopy of $u.$ We conclude that
\begin{equation*}
E_\p(u) - E_\dbar(u) = \int_{\Sigma} u^* \omega_N =: \kappa
\end{equation*}
is a constant depending only on the homotopy class of $u.$ Rearranging, we obtain
\begin{equation}\label{chernweilidentity}
E(u) = 4 \pi \kappa + 2 E_\dbar(u).
\end{equation}
Note that if $N$ also has complex dimension one, then $\kappa = 4 \pi \deg(u).$

Recall that the tension field $\T(u)$ (see (\ref{tensionfield})) is the negative $L^2$-gradient of the Dirichlet functional. We can decompose $\T(u)$ as follows:
\begin{equation*}
\T(u) = \tau(u) + \overline{\tau(u)} \in u^* T^{1,0}N \oplus u^* T^{0,1}N.
\end{equation*}
In view of (\ref{chernweilidentity}), $\tau(u)$ is half the $L^2$ gradient of the functional $E_\dbar(u),$ which can also be computed by integration-by-parts. This yields the formula
\begin{equation*}
\tau(u) = - \dbar^* \dbar u.
\end{equation*}
Here, $\dbar = \dbar_u : \Omega^{0,0}(u^* TN^{1,0} ) \to \Omega^{0,1}(u^*TN^{1,0} )$ is the coupled $\dbar$-operator using the pullback of the Levi-Civita connection on $N.$ Similarly, we have
\begin{equation*}
\tau(u) = - \p^* \p u.
\end{equation*}
By the K\"ahler identities, we also have
\begin{equation*}
\tau(u) = - \Lambda \p \dbar u = \Lambda \dbar \p u.
\end{equation*}
In local coordinates, we have the formula
\begin{equation*}
\tau(u)^\alpha = \sigma^{-2}(z) \left( w^\alpha_{ z \bar{z} } + {}^{N}\Gamma^{\alpha}_{\beta \gamma}  w^\beta_{z} w^\gamma_{\bar{z}} \right)
\end{equation*}
and $\T(u) = 2 \Re(\tau(u)).$ 

We have the \emph{Weitzenbock formula}, for $\alpha \in \Omega^{0,1}(u^*T^{1,0}N):$
\begin{equation*}
\LA \alpha, \dbar \dbar^* \alpha \RA = \LA \alpha, \nabla^* \nabla \alpha \RA + K_\Sigma |\alpha|^2 + q_1(\p u, \alpha) + q_2(\dbar u, \alpha) ,
\end{equation*}
where
\begin{equation*}
\begin{split}
q_1(\p u, \alpha ) \stackrel{loc}{=} \sigma^{-4} {}^N R_{\beta \bar{\gamma} \delta \bar{\eta}} \left( \alpha_{\bar{z}}^\beta \overline {\alpha_{\bar{z}}^\gamma } w_z^\delta \overline{w_{z}^{\eta} } \right)
\end{split}
\end{equation*}
and
\begin{equation*}
\begin{split}
q_2(\dbar u, \alpha ) \stackrel{loc}{=} - \sigma^{-4} {}^N R_{\beta \bar{\gamma} \delta \bar{\eta}} \left( \alpha_{\bar{z}}^\beta \overline {\alpha_{\bar{z}}^\gamma } w_{\bar{z}} ^\delta \overline{w_{\bar{z}}^{\eta} } \right).
\end{split}
\end{equation*}
See \cite{schoenyauhmbook} or \cite[\S 2.6]{songwaldron} for a derivation.
By the Kato inequality, we have
\begin{equation}\label{weitzbeforennhbsc}
\LA \alpha, \dbar \dbar^* \alpha \RA \leq -2 |\nabla \alpha|^2 + \Delta | \alpha|^2 + K_\Sigma |\alpha|^2 + q_1(\p u, \alpha) + q_2(\dbar u, \alpha) ,
\end{equation}

\vspace{2mm}

\subsection{The 2-sphere}\label{ss:sphere} The round 2-sphere $S^2 \subset \R^3$ carries a complex structure $I : TS^2 \to TS^2$ derived from the cross-product on $\R^3:$
\begin{equation*}
I_u(v) = u \times v, \quad v \perp u.
\end{equation*}
This agrees with the complex multiplication on tangent vectors coming from the identification $S^2 \cong \C \P^1,$ given by the stereographic coordinate charts
\begin{equation*}
\begin{split}
z & \mapsto \frac{1}{1 + |z|^2} \left( 2\Re z, 2\Im z, 1 - |z|^2 \right) \\
w = z^{-1} & \mapsto \frac{1}{1 + |w|^2} \left( 2\Re w, - 2\Im w, |w|^2 - 1\right).
\end{split}
\end{equation*}
A similar chart centered at any point in $S^2$ can be obtained by post-composing with an $\SO(3)$ rotation. We have
$$\sigma(z) = \frac{2}{1 + |z|^2}$$
and
$$\mathrm{Vol}_{S^2} = \sigma^2 \mathrm{Vol}_{\R^2}.$$
Letting $z = x + i y,$ and viewing $u : \C \cong \R^2 \to S^2 \subset \R^3$ as a vector-valued function on $\R^2,$ we have
\begin{equation*}
\begin{split}
e_\p(u) & = \frac{1}{4 \sigma^2} |u_x - u \times u_y |^2 \\
e_\dbar(u) & = \frac{1}{4 \sigma^2} |u_x + u \times u_y |^2 \\
\T(u) & = \frac{1}{\sigma^2} \left( \Delta_{\R^2} u + |\nabla_{\R^2} u|^2 u \right) \\
\| \T(u) \|_{L^2(S^2)} & = \| \sigma \T(u) \|_{L^2(\R^2)}.
\end{split}
\end{equation*}
See Topping \cite{toppingannals} for these formulae.



\begin{rmk} In contrast with \cite{toppingannals}, by default, we will take norms with respect to the metric and volume form of $S^2$ rather than with respect to the Euclidean metric in the stereographic chart. For $D_\rho$ with $\rho \leq C,$ these are uniformly equivalent, so the distinction can almost always be ignored. 
\end{rmk}

\vspace{10mm}

\section{Pre-\L ojasiewicz estimates}

This section contains the first several ingredients of our proof. We will assume that the domain manifold is $S^2,$ although this is not always necessary. We take the target to be a general compact Riemannian manifold, $N,$ which we will usually assume to be K\"ahler and eventually to be $S^2.$

Let $k \in \N,$ $K \geq 1,$ and $0 < \beta < \frac12.$ 
Our estimates will involve the following small constants:

\begin{itemize}

\item $\eps_0 > 0$ - constant depending only on $N$ and (later) on $k;$ or if the target is $N = S^2,$ then only on $k$

\item $\eps_1 > 0$ - constant depending on $K$ and $N$

\item $c_1 > 0$ - constant depending only on $K$

\item $\eps_2 > 0$ - constant depending only on $\beta$

\item $\eps_3 = \min_{i = 0,1,2} \eps_i.$

\end{itemize}
These will be allowed to decrease appropriately with each subsequent appearance.

\subsection{$\eps$-regularity}\label{ss:epsreg}


In this subsection, we give a standard epsilon-regularity result followed by two elementary but crucial variations needed later. Recall 
that $D_\rho(x) $ denotes the image in $S^2$ of the disk of radius $\rho$ about the origin in the stereographic chart centered at $x.$

\begin{lemma}\label{lemma:epsreg}
There exists $\eps_0 > 0,$ depending on the geometry of $N,$ as follows. Let $\sigma > 1$ and $\rho \leq 1,$ and put $D = D_\rho(x), \hat{D} = D_{\sigma \rho}(x).$

\vspace{2mm}

\noindent (a) Given $u \in W^{2,2}(\hat{D}, N)$ satisfying $\rho^2 \| \T (u) \|_{L^2(\hat{D})} \leq 1$ and
$$E(u, \hat{D}) < \eps_0,$$
we have 
\begin{equation*}
\rho^2 \| \nabla du \|^2_{ L^2\left( D \right) } \leq C_{\sigma} \left( E(u, \hat{D} ) + \rho^2 \| \T(u) \|_{L^2(\hat{D)})}^2 \right).
\end{equation*}

\vspace{2mm}

\noindent (b) Supposing that $N$ is K\"ahler, we also have
\begin{equation*}
\rho^2 \| \nabla \p u \|^2_{ L^2 \left( D \right) } \leq C_{\sigma} \left( E_\p(u, \hat{D} ) + \rho^2 \| \T(u) \|_{L^2(\hat{D})}^2 \right)
\end{equation*}
and
\begin{equation*}
\rho^2 \| \nabla \dbar u \|^2_{ L^2\left( D \right) } \leq C_{\sigma} \left( E_\dbar (u, \hat{D} ) + \rho^2 \| \T(u) \|_{L^2(\hat{D})}^2 \right).
\end{equation*}
Similar statements hold after replacing $D \Subset \hat{D}$ by any pair of precompact open sets $\Omega \Subset \hat{\Omega}.$ 
\end{lemma}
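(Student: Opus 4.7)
The plan is to prove this standard $\eps$-regularity statement via a weighted Bochner/Weitzenbock estimate, absorbing the quartic curvature nonlinearity using the small-energy hypothesis $E(u,\hat D)<\eps_0$ together with the two-dimensional Gagliardo--Nirenberg inequality. Throughout, I would fix a cutoff $\phi$ with $\phi\equiv 1$ on $D$, compactly supported in a concentric disk $D'\Subset\hat D$, and satisfying $\rho|\nabla\phi|+\rho^2|\nabla^2\phi|\leq C_\sigma$. All estimates will be obtained by pairing a differential identity with $\phi^2$, integrating by parts, and absorbing cutoff-derivative terms into the bulk energy by Cauchy--Schwarz.

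For part (a), I would start from the standard Bochner identity for a map with tension $\T(u)$, schematically
$$\tfrac12\Delta|du|^2 = |\nabla du|^2 + \LA du,\nabla\T(u)\RA + K_\Sigma|du|^2 + R(du),\qquad |R(du)|\leq C|du|^4,$$
where $R$ incorporates the curvature of $N$. Multiplying by $\phi^2$ and integrating, the Laplacian term contributes $\int(\Delta\phi^2)|du|^2\leq C_\sigma\rho^{-2}E(u,\hat D)$, the tension term is bounded via a further integration by parts by $\tfrac14\int\phi^2|\nabla du|^2+C_\sigma\rho^{-2}E(u,\hat D)+C\rho^2\|\T(u)\|_{L^2(\hat D)}^2$, and the $K_\Sigma$-term is absorbed into the energy. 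The quartic term is handled by Gagliardo--Nirenberg applied to $\phi|du|$,
$$\int\phi^2|du|^4\leq C\|\phi du\|_{L^2}^2\,\|\nabla(\phi du)\|_{L^2}^2 \leq C\,E(u,\hat D)\left(\int\phi^2|\nabla du|^2 + C_\sigma\rho^{-2}E(u,\hat D)\right);$$
choosing $\eps_0$ small enough that $C\,E(u,\hat D)<\tfrac14$ absorbs this into $\int\phi^2|\nabla du|^2$ on the left, yielding (a).

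For part (b), I would apply the Weitzenbock inequality (\ref{weitzbeforennhbsc}) with $\alpha=\dbar u$, respectively $\p u$. After pairing with $\phi^2$ and integrating by parts on the LHS using $\dbar^*\dbar u=-\tau(u)$, the bulk term $\int\phi^2|\nabla\dbar u|^2$ is bounded in terms of the Laplacian and curvature contributions, the tension term $\int\phi^2|\tau(u)|^2$, and cutoff terms. The crucial point is that the linear cutoff and Laplacian contributions only involve $E_{\dbar}(u,\hat D)$, giving the desired split estimate. The quartic remainders $q_1,q_2$ satisfy $|q_1|+|q_2|\leq C|du|^2|\dbar u|^2\leq C|du|^4$ and are absorbed exactly as in part (a) via the full smallness $E(u,\hat D)<\eps_0$. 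The $\p u$-estimate follows by the conjugate argument. The annular case is identical: the only property of $D$ used is the existence of a cutoff $\phi$ with the stated scaled derivative bounds, so the argument applies verbatim to $D=U_R^{\sigma R}(x)\Subset\hat D=U_{\sigma^{-1}R}^{\sigma^2 R}(x)$.

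The main obstacle is the absorption of the quartic term $|du|^4$: its pointwise sign is not controlled (even in the K\"ahler setting, since we do not assume nonnegative holomorphic bisectional curvature on $N$), so it must be soaked into $\int\phi^2|\nabla du|^2$ via the small-energy hypothesis and the two-dimensional Sobolev embedding. This is precisely what fixes the dependence of $\eps_0$ on the geometry of $N$, and explains why the full energy $E(u,\hat D)$ (not merely $E_\dbar$) must be small in part (b).
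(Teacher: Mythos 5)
Your proposal is correct and follows essentially the same route as the paper: part (a) is the standard Ding--Tian small-energy Bochner/cutoff/Gagliardo--Nirenberg absorption argument (which the paper cites and omits), and part (b) is obtained exactly as you describe by applying the Weitzenbock inequality (\ref{weitzbeforennhbsc}) with $\alpha = \dbar u$ (resp.\ $\p u$) and running the same cutoff-and-absorption scheme, with the linear terms seeing only $E_\dbar$ and the quartic curvature terms absorbed using the smallness of the full energy. Your remarks on why $\eps_0$ depends on $N$ and why the full energy (not just $E_\dbar$) must be small in (b) are consistent with the paper's setup.
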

\begin{proof} The proof of ($a$) is a minor adaptation of the proof by Ding-Tian \cite[Lemma 2.1]{dingtian}, which we omit.

To prove (b), we apply (\ref{weitzbeforennhbsc}) with $\alpha = \dbar u,$ to get
\begin{equation*}
\LA \dbar u, \dbar \tau(u) \RA \leq -2 |\nabla \dbar u|^2 + \Delta | \dbar u|^2 + \left( K_\Sigma + C |d u|^2 \right) |\dbar u|^2.
\end{equation*}
The result follows by a similar argument.
\end{proof}

\begin{lemma}\label{lemma:qlemma} Suppose that $N$ is K\"ahler. Let $\frac43 \leq q \leq 2,$ $K \geq 1,$ $0 < \rho \leq 1,$ and $x_0 \in S^2.$ Let $U = U_\rho^{2\rho}(x_0)$ and $\hat{U} = U_{\rho/2}^{4\rho}(x_0) \subset S^2.$ Suppose that $\rho^2 \| \T(u) \| \leq 1,$ $E(u, \hat{U}) < \eps_0,$ and
\begin{equation}\label{qlemma:assumption}
E_\dbar (u,\hat{U}) + \rho^2 \|\T(u)\|^2_{L^2 \left( \hat{U} \right)} \leq K E_\dbar(u, U).
\end{equation}
Let $V \subset U$ be any measurable set with
\begin{equation}\label{qlemma:Vassumption}
|V| \geq \left( 1 - (2CK)^{-2 } \right) |U|,
\end{equation}
where $C$ is a sufficiently large universal constant. We then have
\begin{equation*}
E_\dbar(u, U) \leq C K \rho^{2 - \frac{4}{q} } \| e_{\dbar} \|_{L^{\frac{q}{2}}(V)}.
\end{equation*}
\end{lemma}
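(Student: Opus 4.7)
The plan is to combine the $W^{1,2}$ bound supplied by Lemma \ref{lemma:epsreg}$(b)$ with an interpolation argument in order to trade the $L^1$ integral of $e_\dbar$ over $U$ for an $L^{q/2}$ integral over the fat subset $V$. The key inputs are (i) high--$L^p$ control of $e_\dbar$ coming from $\eps$-regularity and $W^{1,2}\hookrightarrow L^{2s}$ in two dimensions, and (ii) the fact that $|U\setminus V|$ can be made as small as a power of $(CK)^{-1}$.

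First I would apply the annular version of Lemma \ref{lemma:epsreg}$(b)$ with $\sigma = 2$ to the pair $U\subset\hat U$. Combining with hypothesis (\ref{qlemma:assumption}), interpreted as the conjunction $E_\dbar(u,\hat U)\le KE_\dbar(u,U)$ and $\rho^2\|\T(u)\|_{L^2(\hat U)}^2 \le KE_\dbar(u,U)$, yields
\begin{equation*}
\rho^2\|\dbar u\|_{W^{1,2}(U)}^2 \;\leq\; C\bigl(E_\dbar(u,\hat U) + \rho^2\|\T(u)\|_{L^2(\hat U)}^2\bigr) \;\leq\; CK\,E_\dbar(u,U).
\end{equation*}
Rescaling $U$ to unit size and invoking the 2D Sobolev embedding $W^{1,2}\hookrightarrow L^{2s}$ (for any $s<\infty$) then gives
\begin{equation*}
\|e_\dbar\|_{L^s(U)} \;=\; \|\dbar u\|_{L^{2s}(U)}^2 \;\leq\; C_sK\rho^{2/s-2}\,E_\dbar(u,U).
\end{equation*}

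Next I would split $E_\dbar(u,U) = \int_V e_\dbar + \int_{U\setminus V}e_\dbar$. For the complementary term, H\"older with exponent $s=2$ combined with $|U\setminus V|\le (2CK)^{-2}|U|\lesssim(CK)^{-2}\rho^2$ and the bound above produces
\begin{equation*}
\int_{U\setminus V} e_\dbar \;\leq\; |U\setminus V|^{1/2}\|e_\dbar\|_{L^2(U)} \;\leq\; \frac{C'}{C}\,E_\dbar(u,U),
\end{equation*}
which is absorbed into the left-hand side upon choosing the constant $C$ in the hypothesis on $|V|$ sufficiently large. This reduces the task to bounding $\int_V e_\dbar$ by $\|e_\dbar\|_{L^{q/2}(V)}$. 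Since $q/2\in[\tfrac23,1]$, this is \emph{not} a reverse H\"older inequality but an honest interpolation against the high-$L^s$ bound: writing $e_\dbar = e_\dbar^\theta\cdot e_\dbar^{1-\theta}$ and applying H\"older to the two factors yields
\begin{equation*}
\int_V e_\dbar \;\leq\; \|e_\dbar\|_{L^{q/2}(V)}^\theta\cdot\|e_\dbar\|_{L^s(V)}^{1-\theta}, \qquad \theta\tfrac{2}{q} + (1-\theta)\tfrac{1}{s} = 1.
\end{equation*}
Choosing $s=2$ gives $\theta = q/(4-q)$; substituting the $L^2$ bound, rearranging, and computing $(1-\theta)/\theta = 2(2-q)/q$ shows that the $\rho$-exponent is exactly $-2(2-q)/q = 2-4/q$ and the $K$-exponent is $2(2-q)/q\in[0,1]$, both independent of $s$ and yielding precisely the desired inequality.

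The main delicate point is justifying the interpolation step when $q/2\leq 1$, where $\|\cdot\|_{L^{q/2}}$ is only a quasi-norm. The H\"older decomposition $\int f = \int f^\theta f^{1-\theta}\le\|f^\theta\|_{L^a}\|f^{1-\theta}\|_{L^b}$ requires $a,b\ge 1$, which translates here into the feasibility condition $q/2 + s \ge 1$, automatic for $s\ge 1$ and $q\ge 4/3$. Apart from this check, the proof is essentially bookkeeping of exponents, the pleasant surprise being that the exponent of $\rho$ is independent of the auxiliary parameter $s$, so the simplest choice $s=2$ yields the cleanest constants.
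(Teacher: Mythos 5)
Your proof is correct and follows essentially the same route as the paper's: $\eps$-regularity plus Sobolev embedding to control a high Lebesgue norm of $e_\dbar$ by $KE_\dbar(u,U)$ via hypothesis (\ref{qlemma:assumption}), splitting off and absorbing the small-measure set $U\setminus V$, and interpolating $\int_V e_\dbar$ between $L^{q/2}$ and the high norm before rearranging. The only (immaterial) difference is bookkeeping: the paper scales to $\rho=1$ and interpolates with exponent $\theta=\tfrac12$ against $L^{p}$ with $p=q^*/2$, whereas you keep $\rho$ explicit and interpolate with $\theta=q/(4-q)$ directly against $L^2$.
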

\begin{proof} By scaling-invariance, we may assume without loss of generality that $\rho = 1.$ Let
$$p = \frac{q^*}{2} = \frac{q}{2(q - 1)} \in \LB 1, 2 \RB.$$
By Lemma \ref{lemma:epsreg}$b$ and the Sobolev inequality, we have
\begin{equation}
\label{qlemma:L4bound} \|e_{\dbar} \|_{L^2(U)} = \| \dbar u \|^2_{L^4(U)} \leq C \left( E_\dbar(u, \hat{U}) + \|\T(u)\|^2_{L^2 \left( \hat{U} \right) } \right) \leq CK E_\dbar (u, U),
\end{equation}
where we have applied the assumption (\ref{qlemma:assumption}).
Then (\ref{qlemma:L4bound}) and H\"older's inequality give
\begin{equation}\label{qlemma:edbar}
\| e_{\dbar} \|_{L^{p}(U)} \leq C  \| e_\dbar \|_{L^2(U)} \leq C K E_\dbar (u, U).
\end{equation}
We have
\begin{equation*}
\begin{split}
E_\dbar(u, U) & = E_\dbar(u, V^c) + E_\dbar(u, V).
\end{split}
\end{equation*}
Applying H{\"o}lder's inequality on the first term and the interpolation inequality on the second, we obtain
\begin{equation*}
\begin{split}
E_\dbar(u, U) & \leq  |V^c|^{\frac12 } \| e_{\dbar} \|_{L^2(U) } + \| e_\dbar \|_{L^p(V) }^{\frac12} \| e_{\dbar} \|_{L^{\frac{q}{2} }(V) }^{\frac12} .
\end{split}
\end{equation*}
By (\ref{qlemma:Vassumption}), we have
\begin{equation}\label{qlemma:Vcarea}
|V^c|^{\frac12 } \leq \frac{1}{2CK} .
\end{equation}
Inserting (\ref{qlemma:edbar}-\ref{qlemma:Vcarea}), we obtain
 \begin{equation*}
\begin{split}
E_\dbar(u, U) 
&  \leq \frac12 E_\dbar(u, U) + \left( C K  E_\dbar(u, U) \right)^{\frac12} \| e_{\dbar} \|_{L^{ \frac{q}{2} }(V) }^{\frac12}.
\end{split}
\end{equation*}
Rearranging and cancelling exponents, we have the desired estimate.
\end{proof}



\begin{lemma}\label{lemma:supinf} Let $N, U,$ and $\hat{U}$ be as in the previous lemma. Given $K \geq 1,$ there exist $\eps_1 > 0,$ depending on $K$ and $N,$ and $c_1 > 0,$ depending on $K,$ as follows. Let $u: \hat{U} \to N$ be a map that satisfies
$$ E(u, \hat{U} ) < \eps_1,$$
\begin{equation}\label{supinf:Tassumption}
\rho^2 \| \T (u) \|^2_{L^2(\hat{U} )} \leq \eps_1 E_\p (u, U),
\end{equation}
and
\begin{equation}\label{supinf:assumption}
E_\p(u,\hat{U} ) \leq K E_\p (u, U).
\end{equation}
Then
\begin{equation}\label{supinf:supinf}
c_{1} \sup_{\rho \leq r \leq 2\rho} \int_{S^1_r} e_\p \, d\theta \leq \inf_{\rho \leq r \leq 2\rho} \int_{S^1_r} e_\p \, d\theta
\end{equation}
and
\begin{equation}\label{supinf:area}
\mathrm{Area} \left\{ x \in U \mid \rho^{2} e_\p(u)(x) \leq c_1 E_\p(u, U) \right\} \leq (2CK)^{-2} \mathrm{Area}(U).
\end{equation}
\end{lemma}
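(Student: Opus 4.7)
The plan is first to establish a pointwise $L^\infty$ bound on $e_\p$ via \textbf{$\eps$-regularity}, then to exploit the approximate holomorphicity of $\p u$ (coming from the K\"ahler identity $\tau(u) = \Lambda\, \dbar_u \p u$) to reduce both (a) and (b) to direct computations for a genuinely holomorphic approximant, whose Laurent modes are controlled by hypothesis (\ref{supinf:assumption}).

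Work in a stereographic chart centered at $x_0$ and scale to $\rho = 1$; all quantities in the statement are scale invariant. Combining Lemma \ref{lemma:epsreg}(b) on $\hat U$ with (\ref{supinf:Tassumption}) and (\ref{supinf:assumption}), one has $\|\p u\|_{W^{1,2}(U)}^2 \leq C_K E_\p(u, U)$. Moser iteration of the Weitzenb\"ock inequality (\ref{weitzbeforennhbsc}) applied with $\alpha = \p u$ upgrades this to $\sup_U e_\p \leq C_K E_\p(u, U)$, with the tension error absorbed by taking $\eps_2$ small as permitted by (\ref{supinf:Tassumption}). Thus $\sup_{r\in[1,2]} \phi(r) \leq C_K E_\p(u,U)$, writing $\phi(r) := \int_{S^1_r} e_\p\, d\theta$.

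For the lower bound on $\phi(r)$, note that the K\"ahler identity gives $\|\dbar(\p u)\|_{L^2(\hat U)}^2 \leq C(\|\T(u)\|_{L^2(\hat U)}^2 + \eps_2\, E_\p(u, U))$, the second term absorbing a curvature correction coming from the difference between $\dbar$ and the coupled $\dbar_u$. Solving $\dbar G = \dbar(\p u)$ on $U$ via a Cauchy kernel, I would decompose $\p u = F + G$ with $F$ holomorphic on $U$ and $\|G\|_{L^\infty(U)}^2 \leq \eta\, E_\p(u, U)$ where $\eta = \eta(\eps_2) \to 0$. Writing the Laurent expansion $F = \sum_{n\in\Z} a_n z^n$ and transferring (\ref{supinf:assumption}) to $F$ modulo the $G$-error yields $\sum_n |a_n|^2 C_n^{\hat U} \leq (K+o(1))\sum_n |a_n|^2 C_n^U$, with $C_n^V$ the $L^2$ mass of $z^n$ on $V$. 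Since $C_n^{\hat U}/C_n^U \geq c\cdot 4^{|n|+1}$, the weighted mass of $\{|a_n|^2\}$ concentrates on modes $|n| \leq N_0 = O(\log K)$. The direct computation $\int_{S^1_r}|F|^2\, d\theta = 2\pi\sum_n |a_n|^2 r^{2n}$ combined with the degree bound gives $\sup_r / \inf_r \leq 4^{N_0} = C(K)$ on $[1,2]$; transferring back using $\|G\|_{L^\infty(U)} = o(1)$ and the upper bound on $\sup_r\phi(r)$ yields (\ref{supinf:supinf}).

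For (b), the sublevel set $\{\rho^2 e_\p \leq c_1 E_\p(u,U)\}$ is contained in $\{|F|^2 \leq 2c_1 E_\p(u,U)\}$ up to an $\eta$-error set. Since $z^{N_0}F$ is essentially a polynomial of degree $\leq 2N_0$ with $L^2$ norm $\gtrsim E_\p(u,U)$ on $U$, $F$ has at most $2N_0$ zeros in $U$ (with multiplicity); near a zero of multiplicity $m$, the sublevel set occupies area $\lesssim c_1^{1/m}$. Summing over zeros and choosing $c_1 = c_1(K)$ small enough (say $c_1 \leq K^{-C\log K}$) drops the total area below $(2CK)^{-2}|U|$, giving (\ref{supinf:area}). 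The main obstacle is executing the annular Cauchy-kernel step so that $G$ is pointwise small, and obtaining the uniform lower bound on the leading Laurent coefficients of $F$ needed for the area estimate; both follow from the $L^2$ identity $\|F\|^2_{L^2(U)} = E_\p(u,U) - O(\eta)$ combined with the degree bound $|n| \leq N_0$.
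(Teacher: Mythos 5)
Your overall architecture --- replace the paper's compactness-and-contradiction step by a direct perturbation of $\p u$ off a holomorphic Laurent series, and extract the mode concentration $|n|\lesssim \log K$ from the doubling hypothesis (\ref{supinf:assumption}) --- is viable and genuinely different from the paper's proof, which first proves both statements for exactly holomorphic $1$-forms "by elementary complex analysis" (essentially your Laurent computation) and then transfers them to $\p u$ by normalizing $\alpha_i = \p u_i/\sqrt{E_\p(u_i,U)}$, extracting a weak $W^{1,2}$ limit, and showing the limit is holomorphic. However, two of your intermediate claims are not obtainable from the stated hypotheses. First, the tension is controlled only in $L^2$, so $\dbar(\p u)$ is only in $L^2$, and the solid Cauchy transform of an $L^2$ function in two dimensions lands in $W^{1,2}$, which does not embed in $L^\infty$; the bound $\|G\|_{L^\infty(U)}^2\leq \eta\,E_\p(u,U)$ is therefore unjustified. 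Second, and for the same reason, Moser iteration cannot produce $\sup_U e_\p\leq C_K E_\p(u,U)$: the source term in (\ref{weitzbeforennhbsc}) involves $\dbar\tau(u)$, and with $\tau(u)$ merely in $L^2$ the local maximum principle does not close. Both claims can be replaced --- the quantity you actually need for ($a$) is $\sup_r\int_{S^1_r}e_\p\,d\theta$, which is controlled by $\|\p u\|^2_{W^{1,2}(U)}\leq C_K E_\p(u,U)$ via the one-dimensional trace embedding (this is exactly the device the paper uses to pass (\ref{supinf:contradiction}) to the limit), and for ($b$) the set where $|G|^2$ exceeds $c_1E_\p$ can be shown to have small area by Chebyshev using $\|G\|_{L^p}$ for a large fixed $p<\infty$ --- but as written these steps are false.

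There is also a gap in your part ($b$). Truncating $F$ to the modes $|n|\leq N_0$ leaves a tail that is small in $L^2(U)$ but, from the inequality $\sum_{|n|>N_0}|a_n|^2C_n^{\hat U}\leq CK\|F\|^2_{L^2(U)}$, only \emph{bounded} (not small) in $L^\infty(U)$; so smallness of $|F|$ at a point does not imply smallness of the polynomial $P$ there, and the zero-counting/Cartan argument does not apply to $F$ directly. This can be repaired by enlarging the cutoff to $N_0=C\log K$ with $C$ large, so that the tail is small in $L^2$ of an intermediate annulus $U\Subset U''\Subset\hat U$ and hence pointwise small on $U$ by the mean value property, but you need to say this: the exponential weight $4^{|n|}$ you extract from $C_n^{\hat U}/C_n^U$ is precisely what makes the larger cutoff work. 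With these three repairs your quantitative argument goes through and, unlike the paper's proof, yields explicit dependence of $c_1$ and $\eps_2$ on $K$; the price is considerably more bookkeeping than the soft argument, which only needs the holomorphic model case plus weak $W^{1,2}$ compactness.
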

\begin{proof} This will follow by applying a standard contradiction argument to the $(1,0)$-form $\p u.$ We may assume $\rho = 1$ without loss of generality. We view $U$ and $\hat{U}$ as subsets of the ball $B_{4}(0) \subset \C$ by stereographic projection.

First let $\alpha = f(z) dz$ be a $\C$-valued holomorphic 1-form on $U' = U^{3}_{2 / 3},$ satisfying $$\dbar \alpha = 0, \quad \int_U |\alpha|^2 = 1,
\,\,\,\, \text{and} \,\,\,\,
\int_{U'} |\alpha|^2 \leq K.$$
By elementary complex analysis, $\alpha$ must obey estimates of the form
\begin{equation}\label{supinf:supinfcomplexanalysisbound}
2 c_1 \sup_{1 \leq r \leq 2} \int_{S^1_r} |\alpha|^2 \, d\theta \leq \inf_{1 \leq r \leq 2} \int_{S^1_r} |\alpha|^2 \, d\theta
\end{equation}
and
\begin{equation*}
\mathrm{Area} \left\{ x \in U \mid |\alpha(x)|^2 \leq 2 c_{1}  \right\} \leq (2CK)^{-2}
\end{equation*}
for a small constant $c_1 > 0$ depending on $K.$ The same statements hold for a holomorphic 1-form valued in a flat holomorphic bundle over $U'.$

We can now prove (\ref{supinf:supinf}) by contradiction. Assume that $u_i : \hat{U} \to N$ is a sequence of nonconstant maps such that
\begin{equation*}
E(u_i, \hat{U} )  \leq \frac{1}{i},
\end{equation*}
$$E_\p(u_i,\hat{U} ) \leq K E_\p (u_i, U),$$
and
\begin{equation}\label{supinf:Tutozero}
\| \T (u_i) \|^2_{L^2(\hat{U} )} \leq \frac{E_\p (u_i, U)}{i},
\end{equation}
but for which
\begin{equation}\label{supinf:contradiction}
c_{1} \sup_{\rho \leq r \leq 2\rho} \int_{S^1_r} |\dbar u_i|^2 \, d\theta \geq \inf_{\rho \leq r \leq 2\rho} \int_{S^1_r} |\dbar u_i|^2 \, d\theta.
\end{equation}
By Lemma \ref{lemma:epsreg}$a,$ we have
\begin{equation*}
\| \nabla d u_i \|_{L^2(U')}^2\leq C \left( E(u_i, \hat{U}) + \frac{1}{i}E_{\p}(u_i, \hat{U}) \right) \leq C E(u_i, \hat{U}) \to 0.
\end{equation*}
Passing to a subsequence, we may assume that the images $u_i \left( U' \right)$ are contained in a fixed holomorphic coordinate neighborhood inside $N.$ We have $d u_i \to 0$ strongly in $W^{1,2}(U').$

Now, by Lemma \ref{lemma:epsreg}$b$ and (\ref{supinf:Tutozero}), we have
\begin{equation}\label{supinf:Lpboundondu}
\| \nabla \p u_i \|^2_{L^2(U')} \leq C \left( 1 + \frac{1}{i} \right) E_{\p}(u_i, \hat{U}) \leq C E_{\p}(u_i, U).
\end{equation}
Let
$$\alpha_i = \frac{\p u_i}{\sqrt{E_\p (u, U)} }.$$
We then have
$$\int_U |\alpha_i|^2 = 1,$$
$$\int_{\hat{U} } |\alpha_i|^2 \leq K,$$
$$\int_{\hat{U}} | \dbar_{u_i} \alpha_i |^2 = \frac{\int |\T(u_i)|^2}{E_\p(u, U)} \to 0,$$
and, by (\ref{supinf:Lpboundondu}),
\begin{equation*}
\| \nabla \alpha_i \|_{L^2(U')} \leq C.
\end{equation*}
We may pass to a weak limit in $W^{1,2}(U'),$
$$\alpha_i \rightharpoonup \alpha,$$
which again satisfies
\begin{equation}\label{supinf:boundsonalpha}
\begin{split}
\int_U |\alpha|^2 & = 1, \\
\int_{U' } |\alpha|^2 & \leq K.
\end{split}
\end{equation}
In the holomorphic coordinates on $N,$ we have
$$\left( \dbar_{u_i} \alpha_i \right)^\mu = \dbar \left( \alpha_i \right)^\mu + {}^N \Gamma(du_i, du_i) \# \alpha_i.$$
Since the left-hand side tends to zero in $L^2,$ 
while $du_i$ tends to zero 
and $\alpha_i$ is bounded in $L^p$ for all $p,$
$ \dbar (\alpha_i)^\mu$
must also tend to zero in $L^2;$ hence $\dbar \alpha = 0.$ Moreover, since $\alpha_i$ converges weakly in $W^{1,2}(U'),$ $\sqrt{\int_{S^1_r} |\alpha_i|^2 \, d\theta}$ converges weakly in $W^{1,2}(\LB \rho, 2\rho \RB)$ as a function of $r,$ and in particular strongly in $C^0(\LB \rho, 2\rho \RB),$ so (\ref{supinf:contradiction}) is preserved.
But, together with (\ref{supinf:boundsonalpha}), this contradicts (\ref{supinf:supinfcomplexanalysisbound}), establishing (\ref{supinf:supinf}).

The proof of (\ref{supinf:area}) is similar.
\end{proof}


\vspace{5mm}

\subsection{Topping's estimates}


The following basic estimates are due to Topping \cite{toppingannals}. Throughout this subsection and the next, we suppose that $u$ maps from $S^2$ to $S^2.$

\begin{prop}[\cite{toppingannals}, Lemma 2.5b]\label{prop:globalrepulsion} For $1 \leq q < 2,$ assuming $E(u) \leq 4 \pi k,$ we have
$$ \| \sqrt{e_\p(u) e_\dbar(u)} \|_{L^q(S^2)} \leq C_q \sqrt{k} \| \T (u) \|_{L^2(S^2)} .$$
\end{prop}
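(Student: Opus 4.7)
The key object is the \emph{Hopf differential}
\[
\Phi(u) = 2 h_{\alpha \bar{\beta}} w^\alpha_z \bar{w}^\beta_z \, dz \otimes dz,
\]
a section of $\left( T^{*1,0} S^2 \right)^{\otimes 2} \cong K_{S^2}^{\otimes 2}$. Because the target has complex dimension one, a direct computation in local coordinates gives the pointwise identity $|\Phi|^2 = 4 \, e_\p(u) \, e_\dbar(u)$; thus the proposition is equivalent to an $L^q$-bound on $\Phi$ itself in terms of $\| \T(u) \|_{L^2}$.

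First I would record the identity $\dbar \Phi(u) = 4 \langle \T(u), du \rangle \, dz \otimes dz \otimes d\bar z$, obtained by differentiating $\Phi$ in normal coordinates and using the local formula $\tau(u)^\alpha = \sigma^{-2}\bigl( w^\alpha_{z \bar z} + {}^N\Gamma^\alpha_{\beta \gamma} w^\beta_z w^\gamma_{\bar z} \bigr)$ from \S\ref{ss:mapstoKahler}. This is the quantitative form of the Lemaire--Wood trick: the right-hand side vanishes identically precisely when $u$ is harmonic. Combined with Cauchy--Schwarz and the energy hypothesis $\| du \|_{L^2(S^2)}^2 = 2 E(u) \leq 8 \pi k$, this immediately yields
\[
\| \dbar \Phi \|_{L^1(S^2)} \leq C \| \T(u) \|_{L^2(S^2)} \| du \|_{L^2(S^2)} \leq C \sqrt{k} \, \| \T(u) \|_{L^2(S^2)}.
\]

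To convert this $L^1$-bound on $\dbar \Phi$ into an $L^q$-bound on $\Phi$, I would invoke Hodge theory for the line bundle $K^{\otimes 2}$ over $S^2 \cong \C \P^1$. Since $\deg K^{\otimes 2} = -4 < 0$, we have $H^0(S^2, K^{\otimes 2}) = 0$, so $\dbar$ has trivial kernel on smooth sections of $K^{\otimes 2}$, and $\Phi$ is uniquely recovered from $\dbar \Phi$ via a Green's operator $G$ (with $\dbar \Phi$ automatically orthogonal to the three-dimensional cokernel, since $\Phi$ already exists globally). Locally, $G$ acts as convolution with the Cauchy kernel $\frac{1}{\pi z}$, which lies in weak $L^2(\C)$, so by Young's inequality for Lorentz spaces, $G : L^1 \to L^{2, \infty}$ is bounded; on the compact surface $S^2$, this embeds into $L^q$ for every $q < 2$. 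Combining,
\[
\| \Phi \|_{L^q(S^2)} \leq C_q \| \dbar \Phi \|_{L^1(S^2)} \leq C_q \sqrt{k} \, \| \T(u) \|_{L^2(S^2)},
\]
and dividing by $2$ gives the claimed estimate on $\sqrt{e_\p e_\dbar}$.

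The main technical point is the $L^1 \to L^q$ boundedness of the Green's operator $G$: the local input is just Calder\'on--Zygmund theory for the Cauchy transform, but one must patch the local estimates globally on $S^2$ and absorb the finite-dimensional cokernel projection. The endpoint restriction $q < 2$ is sharp because $\frac{1}{\pi z}$ lies only in weak $L^2(\C)$ and not in $L^2(\C)$ near the origin.
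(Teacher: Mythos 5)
Your argument is correct and is essentially the proof the paper relies on: the proposition is quoted verbatim from Topping's Lemma 2.5b, whose proof is exactly this quantitative Hopf-differential argument (the identity $\dbar\Phi(u) = 4\langle \T(u), du\rangle\, dz\otimes dz\otimes d\bar z$, Cauchy--Schwarz, and the $L^1 \to L^{2,\infty}\hookrightarrow L^q$ bound for the $\dbar$-Green's operator on $K^{\otimes 2}$ over $\C\P^1$, using $H^0(K^{\otimes 2})=0$). Indeed, the paper's source even contains the computation of $\dbar\Phi$ in a commented-out block of \S\ref{ss:mapstoKahler}, so your reconstruction matches the intended route.
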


\begin{prop}[\cite{toppingannals}, Lemma 2.3d]\label{prop:localdominanceoftension} Let $0 < \rho \leq R < \infty,$ with $R \geq 1,$ and assume that $D_{\rho_i}(x_i) \subset D_{\rho}(x_0)$ for each $i.$ Let
$$\Omega' = D_{\rho}(x_0) \setminus \cup_i \bar{D}_{\rho_i}(x_i), \quad \hat{\Omega}' = D_{2R}(x_0) \setminus \cup_i \bar{D}_{\rho_i/2}(x_i).$$  
If $E_{\dbar}(u, D_{2R}(x_0) ) < \eps_2 = \eps_2(\beta),$
then we have
\begin{equation*}
\begin{split}
E_\dbar(u, \Omega' ) \leq C_{\ell, \beta} \left( \frac{\rho}{R} \right)^\beta \left( R^2 \| \T(u) \|_{L^2 \left( \hat{\Omega}' \right) }^2 + E_\dbar \left( u, U^{2R}_{R}(x_0) \right)  + \sum_{i = 1}^\ell \left(\frac{R}{\rho_i} \right)^{\beta} E_\dbar ( u , U^{\rho_i }_{\rho_i/2} (x_i) ) \right).
\end{split}
\end{equation*}
\end{prop}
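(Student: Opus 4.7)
The proposition is a local Hartogs-type decay for the anti-holomorphic energy on the multiply-punctured disk $\Omega'$. My plan is to reduce it to the clean decay for a genuinely (twisted-)holomorphic section, with the tension $\T(u)$ appearing as the error of this reduction.

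First, view $\dbar u$ as a section of the twisted line bundle $\mathcal{L} := T^{*0,1}\Sigma \otimes u^*T^{1,0}N,$ which by the K\"ahler identity $\tau(u) = -\dbar^*_u \dbar u$ from \S\ref{ss:mapstoKahler} is $\dbar^*_u$-coclosed up to the tension error. On $\hat\Omega'$ I would solve the auxiliary equation $\dbar^*_u w = -\tau(u)$ by standard elliptic theory, obtaining
\begin{equation*}
\|w\|_{L^2(\hat\Omega')} \le C R\,\|\T(u)\|_{L^2(\hat\Omega')};
\end{equation*}
$\eps$-regularity (Lemma \ref{lemma:epsreg}$b$) is used here to ensure that the twisting of $\mathcal{L}$ is a small perturbation of a trivial bundle on each $\eps$-regular subdomain. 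Then $\alpha := \dbar u - w$ satisfies $\dbar^*_u \alpha = 0$ and (since $\Omega^{0,2}(\Sigma) = 0$) automatically $\dbar_u \alpha = 0$, so $\alpha$ is a twisted-holomorphic $(0,1)$-form. This gives
\begin{equation*}
E_\dbar(u, \Omega') \le 2\|\alpha\|_{L^2(\Omega')}^2 + C R^2\|\T(u)\|_{L^2(\hat\Omega')}^2,
\end{equation*}
reducing the proposition to a Hartogs decay for $\|\alpha\|_{L^2(\Omega')}^2$.

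For this I would apply a Laurent-type decomposition $\alpha = \alpha_0 + \sum_{i=1}^\ell \alpha_i$ in which $\alpha_0$ extends twisted-holomorphically to $D_{2R}(x_0)$ and each $\alpha_i$ to $S^2 \setminus D_{\rho_i/2}(x_i).$ On the intermediate region $\Omega'$, the $L^2$-norm of each summand is bounded by its $L^2$-norm on the appropriate boundary annulus ($U^{2R}_R(x_0)$ for $\alpha_0$ and $U^{\rho_i}_{\rho_i/2}(x_i)$ for $\alpha_i$) times an explicit power-law factor: $(\rho/R)^\beta$ for the outer extension and $(\rho \rho_i / R^2)^\beta = (\rho/R)^\beta \cdot (R/\rho_i)^{-\beta}$ for each inner one, via the standard elementary estimate for Laurent modes vanishing to the appropriate order. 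Summing and matching the boundary $L^2$-norms to the boundary $E_\dbar$-energies via trace theorems reproduces exactly the three groups of terms on the right of the claim.

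The main obstacle I anticipate is carrying out the decomposition for a section of the twisted bundle $\mathcal{L}$ rather than a genuine $\C$-valued holomorphic function: within each subdomain where $E_\dbar < \eps_1,$ the bundle $u^*T^{1,0}N$ admits a small-perturbation holomorphic trivialization controlled by $\eps$-regularity, in which $\alpha$ becomes a $\C$-valued holomorphic function up to an absorbable error, but patching such local trivializations across the $\ell+1$ punctures without logarithmic losses is nontrivial and is presumably the origin of the $C_{\ell,\beta}$ dependence. A secondary subtlety is that the solvability estimate for $w$ must respect the same power weight as the Hartogs decay, to ensure that the tension error contributes at the same scale $(\rho/R)^\beta$ as the other boundary terms rather than a weaker one.
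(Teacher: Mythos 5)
Your route — correct $\dbar u$ to a coclosed form $\alpha$ and then run a Laurent/Hartogs decomposition — has a fatal flaw at the step where you assign the decay factor $(\rho\rho_i/R^2)^\beta$ to the inner pieces $\alpha_i$. A piece extending (twisted-)holomorphically to the complement of $D_{\rho_i/2}(x_i)$ consists of the \emph{negative} Laurent modes at $x_i$, and these do not decay from the inner collar into $\Omega'$: for the mode $(z-x_i)^{-n}$ with $n\ge 2$ one has $\|\alpha_i\|^2_{L^2(\Omega')}\sim\|\alpha_i\|^2_{L^2(U^{\rho_i}_{\rho_i/2}(x_i))}\sim\rho_i^{2-2n},$ so the ratio is of order $1$, not $(\rho\rho_i/R^2)^\beta$, and the mode $n=1$ even produces a factor $\log(\rho/\rho_i)$. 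Consequently the linear analogue of the Proposition is simply false: $\alpha=\overline{c(z-x_1)^{-2}}\,d\bar z$ on $\hat\Omega'$ is coclosed with zero ``tension,'' has $\|\alpha\|^2_{L^2(D_R\setminus D_{\rho_1/2})}$ as small as you like, and violates the asserted inequality by a factor tending to infinity as $\rho_1/R\to 0$. So no reduction of the Proposition to holomorphic function theory for sections can succeed; the smallness hypothesis $E_\dbar(u,D_R(x_0))<\eps_1$ is taken on the \emph{full} disk, including the interiors of the holes, and it is exactly this (via energy quantization for maps into $S^2$) that rules out the singular tails your linear model permits.

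The paper's proof is instead a rescaling ($R=1$) of Topping's Lemma 2.3d, which is genuinely nonlinear: one uses the equation $\Delta u=-|\nabla u|^2u+\T(u)$ for the $\R^3$-valued map together with the smallness of $E_\dbar$ on all of $D_R$ to obtain improved integrability $\|e_\dbar\|_{L^{q/(2-q)}(\Omega')}$ controlled by the tension and the two collar energies; the prefactor $(\rho/R)^{\beta}$ then comes from H\"older's inequality against the small measure of $\Omega'\subset D_\rho(x_0)$, not from any pointwise holomorphic decay. Two secondary problems with your plan: (i) Lemma \ref{lemma:epsreg} requires the \emph{full} energy $E(u,\hat D)$ to be small, whereas only $E_\dbar$ is hypothesized here, so you cannot invoke it to trivialize $u^*T^{1,0}N$; and (ii) the solvability estimate $\|w\|_{L^2}\le CR\|\T(u)\|_{L^2}$ for $\dbar_u^*w=-\tau(u)$ with a constant uniform over multiply-punctured domains whose holes have arbitrarily small radii is asserted, not proved. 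But even granting both, the argument fails at the inner-mode decay step described above.
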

\begin{proof} After homothetically rescaling so that $R = 1,$ the $E_\dbar$ terms are unchanged and the $R^2 \| \T(u) \|^2$ term decreases; hence, we may assume without loss of generality that $R = 1.$ Letting $\mu = 1$ in Topping's notation, the proof of \cite[Lemma 2.3d]{toppingannals} on p. 486 gives
\begin{equation*}
\begin{split}
E_\dbar(u, \Omega' ) & \leq C \rho^{\frac{2 (q - 1)}{q}} \| u \|_{L^{\frac{2q}{2 - q}}(\hat{\Omega}') } \\
& \leq C_{q} \rho^{\frac{2 (q - 1)}{q}} \left( \| \T(u) \|_{L^2 \left( \hat{\Omega}' \right) } + \| u \|_{L^{q}(U^{2}_{1}) }  + \sum_{i = 1}^\ell \frac{1}{\rho_i} \| u \|_{L^q(U^{\rho_i }_{\rho_i/2} (x_i) ) } \right).
\end{split}
\end{equation*}
The proof proceeds by applying H\"older's inequality on the remaining terms, as in \cite{toppingannals}, and letting $\beta = \frac{4(q - 1)}{q}.$
\end{proof}

\begin{prop}\label{prop:globaldominanceoftension} Assume $0 < \rho_i \leq \frac{1}{\sqrt{\ell}}$ for $i = 1, \ldots, \ell.$ Write 
\begin{equation*}
\Omega = S^2 \setminus \cup_i D_{\rho_i }(x_i), \quad 
\hat{\Omega} = S^2 \setminus \cup_i D_{\rho_i / 2}(x_i),
\end{equation*}
$$U_i = U^{2 \rho_i}_{\rho_i}(x_i), \quad \hat{U}_i = U^{4 \rho_i}_{\rho_i/2}(x_i).$$
Supposing that $E_\dbar(u, \hat{\Omega} ) < \eps_0,$ we have
$$E_\dbar(u, \Omega ) \leq C_{\ell, \beta} \left( \| \T(u) \|_{L^2(\hat{\Omega})}^2 + \sum \rho_i^{-\beta} E_\dbar ( u , U^{\rho_i}_{\rho_i/2} (x_i) ) \right).$$
\end{prop}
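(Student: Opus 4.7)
The plan is to derive Proposition~\ref{prop:globaldominanceoftension} as the global analog on $S^2$ of the local estimate in Proposition~\ref{prop:localdominanceoftension}. Since a single stereographic disk $D_R(x_0)$ with $R<\infty$ never exhausts $S^2,$ the proof must proceed by a finite covering argument, and the main work will be controlling the seam terms produced where the covering disks meet.

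I would begin by selecting a covering of $\Omega.$ The hypothesis $\rho_i\leq 1/\sqrt{\ell}$ bounds the total $S^2$-area of $\bigcup_i D_{\rho_i/2}(x_i),$ so a measure/pigeonhole argument produces a finite set of points $y_1,\ldots,y_m\in S^2,$ with $m$ an absolute constant, and a fixed $r$ of order $1,$ such that $\Omega\subset\bigcup_j D_r(y_j)$ with bounded multiplicity and such that the outer annuli $U^{2r}_r(y_j)$ all lie inside $\hat\Omega$ (i.e., avoid every $\bar D_{\rho_i/2}(x_i)$). With such a configuration, the hypothesis $E_\dbar(u,\hat\Omega)<\eps_0$ propagates into the assumption $E_\dbar(u,D_R(y_j))<\eps_1$ needed by Proposition~\ref{prop:localdominanceoftension}, once the bubble contributions inside $D_r(y_j)$ are accounted for through the excluded-disk bookkeeping.

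Next, for each $j,$ I would invoke Proposition~\ref{prop:localdominanceoftension} with $x_0=y_j,$ $\rho=r,$ $R=r,$ and the subfamily $\mathcal I_j=\{i:D_{\rho_i}(x_i)\subset D_r(y_j)\}.$ Summing the resulting inequalities gives
\[
E_\dbar(u,\Omega)\leq\sum_j E_\dbar(u,\Omega'_j),
\]
where $\Omega'_j=D_r(y_j)\setminus\bigcup_{i\in\mathcal I_j}\bar D_{\rho_i}(x_i).$ The right-hand side decomposes into three kinds of terms. The $\|\T\|^2$ terms produce a bounded-multiplicity sum, controlled by $C_m\|\T(u)\|^2_{L^2(\hat\Omega)}.$ The outer annulus terms $\sum_j E_\dbar(u,U^{2r}_r(y_j))$ are bounded by $C_m E_\dbar(u,\hat\Omega)<C_m\eps_0;$ choosing $\eps_0$ small enough, this contribution is absorbed into the left-hand side (possibly after iterating the covering once with slightly shrunken inner radii). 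The inner annulus terms, once reindexed and combined with the prefactor $(\rho/R)^\beta(R/\rho_i)^{-\beta}$ from Proposition~\ref{prop:localdominanceoftension} and the bounded combinatorial multiplicity, reassemble into the stated weighted sum $\sum_i\rho_i^{-\beta}E_\dbar(u,U^{\rho_i}_{\rho_i/2}(x_i)).$

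The main obstacle will be the geometric covering step: arranging simultaneously that (a) $\Omega\subset\bigcup_j D_r(y_j);$ (b) every bubble disk $D_{\rho_i}(x_i)$ is contained in some $D_r(y_j)$ so that it enters the local estimate as an excluded disk; and (c) every outer annulus $U^{2r}_r(y_j)$ avoids every $\bar D_{\rho_i/2}(x_i).$ All three requirements can in principle be met using the area bound implied by $\rho_i\leq 1/\sqrt{\ell},$ but bubbles of widely disparate scales will likely force a more refined Besicovitch-style construction, or a scale-by-scale sorting of the $\rho_i.$ A secondary concern will be checking that the reassembled weighted sum genuinely matches the stated exponent of $\rho_i,$ since careless bookkeeping of the powers coming from Proposition~\ref{prop:localdominanceoftension} could shift the exponent.
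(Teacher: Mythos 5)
Your overall shape---reduce to Proposition~\ref{prop:localdominanceoftension} on a covering of $\Omega$ and absorb the boundary terms---is right, but the absorption step as you describe it does not work, and this is the crux of the proof.

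You propose to apply Proposition~\ref{prop:localdominanceoftension} with $\rho=R=r$ on each covering disk, bound the resulting outer-annulus terms by $C_m E_\dbar(u,\hat\Omega)<C_m\eps_0$, and then ``absorb'' this into the left-hand side by taking $\eps_0$ small. That is not a legitimate absorption: $C_m\eps_0$ is an additive \emph{constant}, while the left-hand side $E_\dbar(u,\Omega)$ may be far smaller than $\eps_0$ (indeed the whole point of the proposition is that $E_\dbar(u,\Omega)$ is controlled by quantities that vanish along almost-harmonic sequences). An inequality of the form $E_\dbar(u,\Omega)\leq(\text{good terms})+C_m\eps_0$ gives nothing. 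Absorption requires the outer-annulus terms to appear with a multiplicative prefactor strictly less than $1$ in front of something dominated by $E_\dbar(u,\Omega)$, and in Proposition~\ref{prop:localdominanceoftension} that prefactor is $C_{\ell,\beta}(\rho/R)^\beta$. With your choice $\rho=R=r$ this factor is $C_{\ell,\beta}$, which is not small, so no gain is available from either route.

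The paper's proof is built precisely around making $(\rho/R)^\beta$ small, which forces $R\gg\rho$ and collapses the covering to two charts. One fixes $\rho=2$ and $R=\max\{\sqrt{\ell},\,2(4C_{\ell,\beta})^{1/\beta}\}$, so the prefactor becomes $\tfrac14$; a pigeonhole argument using $\rho_i\leq 1/\sqrt{\ell}$ produces antipodal points $x_0,\hat x_0$ whose caps $D_{1/\sqrt{\ell}}(\cdot)$ avoid all the $D_{\rho_i}(x_i)$, and $D_2(x_0)\cup D_2(\hat x_0)=S^2$. Applying Proposition~\ref{prop:localdominanceoftension} in each chart, the outer annulus $U_R^{2R}(x_0)$ (a small annulus near $\hat x_0$) lies inside $\Omega\cap D_2(\hat x_0)$ and vice versa, so the sum of the two outer-annulus terms is at most $E_\dbar(u,\Omega)$; with the coefficient $\tfrac14$ these are absorbed after adding the two inequalities. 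Your Besicovitch-style covering with order-one disks cannot reproduce this mechanism, and the secondary requirements you list (every $D_{\rho_i}(x_i)$ contained in some covering disk, every outer annulus avoiding every $\bar D_{\rho_i/2}(x_i)$) become moot once you adopt the two-antipodal-chart construction.
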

\begin{proof} 
Since $\rho_i \leq \frac{1}{\sqrt{\ell}},$ there exist two antipodal points $x_0, \hat{x}_0 \in S^2$ such that
$$D_{1/ \sqrt{\ell} }(x_0) \cap D_{\rho_i}(x_i) = \emptyset = D_{1/\sqrt{\ell} }(\hat{x}_0) \cap D_{\rho_i}(x_i)$$
for $i = 1, \ldots, \ell.$ Let
$$\rho = \sqrt{\ell}, \qquad R = (2 C_{\ell,\beta} )^{\frac{1}{\beta}} \rho,$$
where $C_{\ell,\beta} \geq 1$ is the constant of the previous Proposition.
We apply the proposition twice, 
to obtain
\begin{equation*}
\begin{split}
E_\dbar(u, \Omega \cap D_{\sqrt{\ell}} (x_0) ) \leq \frac{1}{2} \left( R^2 \| \T(u) \|_{L^2 (S^2) }^2 + E_\dbar \left( u, U^{2R}_{R}(x_0) \right) + \sum_{i = 1}^k \left(\frac{R}{\rho_i} \right)^{\beta} E_\dbar ( u , U^{\rho_i }_{\rho_i/2} (x_i) ) \right),
\end{split}
\end{equation*}
\begin{equation*}
\begin{split}
E_\dbar(u, \Omega \cap D_{\sqrt{\ell}}(\hat{x}_0) ) \leq \frac{1}{2} \left( R^2 \| \T(u) \|_{L^2 (S^2) }^2 + E_\dbar \left( u, U^{2R}_{R}(\hat{x}_0) \right)  + \sum_{i = 1}^\ell \left(\frac{R}{\rho_i} \right)^{\beta} E_\dbar ( u , U^{\rho_i }_{\rho_i/2} (x_i) ) \right).
\end{split}
\end{equation*}
Since $R^2 \geq \ell,$ we have $U^{2R}_R(x_0) \subset D_{\frac{1}{\sqrt{\ell} }}(\hat{x}_0),$ so $U^{2R}_R(x_0) \subset \Omega \cap D_{\sqrt{\ell}}(\hat{x}_0);$ also $U^{2R}_R(\hat{x}_0) \subset D_{\frac{1}{\sqrt{\ell} }}(x_0),$ so $U^{2R}_R(\hat{x}_0) \subset \Omega \cap D_{\sqrt{\ell}} (x_0).$ Therefore
$$E_\dbar \left( u, U^{2R}_{R}(\hat{x}_0) \right) +  E_\dbar \left( u, U^{2R}_{R}(x_0) \right) \leq E_\dbar(u, \Omega).$$
We may add the above two inequalities together and rearrange, to obtain
\begin{equation*}
\begin{split}
& E_\dbar(u, \Omega ) \leq 2 \left( R^2 \| \T(u) \|_{L^2 (S^2) }^2 + \sum_{i = 1}^\ell \left( \frac{R}{\rho_i} \right)^{\beta} E_\dbar ( u , U^{\rho_i }_{\rho_i/2} (x_i) ) \right) .
\end{split}
\end{equation*}
Finally, we absorb $R$ into the constant to obtain the desired estimate.
\end{proof}

\begin{lemma}[\cite{toppingannals}, Lemma 2.23]\label{lemma:cutandpaste}

Let $\Omega, \hat{\Omega},$ and $U_i$ be as above.

\vspace{2mm}

\noindent (a) Suppose that $\|\T(u)\| \leq 1$ and 
\begin{equation*}
\sum_{i = 1}^\ell E(u, U_i) + E_\dbar(u, \Omega) \leq \eps < \eps_0.
\end{equation*}
Then
\begin{equation*}
\dist \left( E(u, \hat{\Omega}), 4\pi\Z  \right) \leq C \eps.
\end{equation*}

\vspace{2mm}

\noindent (b) Suppose further that $\cup D_{2 \rho_i}(x_i) \subset D_{\rho_0} (x_0),$ and let $\Omega', \hat{\Omega}'$ be as above. 
If $\|\T(u)\| \leq \rho_0^{-1}$ and
\begin{equation*}
\sum_{i = 0}^\ell E(u, U_i) + E_\dbar(u, \Omega') \leq \eps < \eps_0,
\end{equation*}
then
\begin{equation*}
\dist \left( E(u, \hat{\Omega}'), 4\pi \Z  \right) \leq C \eps.
\end{equation*}
\end{lemma}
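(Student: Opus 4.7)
The plan is to modify $u$ near each $x_i$ by a cut-and-paste argument to produce a comparison map $\tilde u : S^2 \to S^2$ whose integer degree and small $\dbar$-energy yield the integrality estimate via the Chern-Weil identity \eqref{chernweilidentity}, which for $w \in W^{1,2}(S^2, S^2) \cap C^0$ reads $E(w, S^2) = 4\pi\deg(w) + 2 E_\dbar(w, S^2)$.

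The construction proceeds in three steps. First, by Fubini applied to the hypothesis $E(u, U_i) \leq \eps$, choose for each $i$ a radius $r_i \in [\rho_i, 2\rho_i]$ with
\[
r_i \int_{\partial D_{r_i}(x_i)} |du|^2 \, d\sigma \leq C\, E(u, U_i) \leq C\eps.
\]
The Sobolev embedding $W^{1,2}(S^1) \hookrightarrow C^0$ then places $u(\partial D_{r_i}(x_i))$ inside a geodesic ball of radius $O(\sqrt{\eps})$ in $S^2$, which (for $\eps_0$ small) is geodesically convex. Second, define $\tilde u$ to equal $u$ on $\Omega' := S^2 \setminus \bigcup_i D_{r_i}(x_i)$ and, on each disk $\bar D_{r_i}(x_i)$, to be the geodesic radial extension of $u|_{\partial D_{r_i}}$ to a point in that small ball. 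A standard harmonic-extension bound gives $E(\tilde u, D_{r_i}) \leq C\eps$; since $\tilde u \in W^{1,2} \cap C^0$, its degree is a well-defined integer. Third, applying Chern-Weil to $\tilde u$,
\[
\bigl| E(\tilde u, S^2) - 4\pi \deg(\tilde u) \bigr| = 2 E_\dbar(\tilde u, S^2) \leq 2 E_\dbar(u, \Omega) + 2\sum_i E(\tilde u, D_{r_i}) \leq C\eps.
\]

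To conclude, compare the decompositions $E(\tilde u, S^2) = E(u, \Omega') + \sum E(\tilde u, D_{r_i})$ and $E(u, \hat\Omega) = E(u, \Omega') + \sum E(u, U^{r_i}_{\rho_i/2}(x_i))$; the difference is
\[
E(u, \hat\Omega) - E(\tilde u, S^2) = \sum_i \Bigl( E(u, U^{r_i}_{\rho_i/2}(x_i)) - E(\tilde u, D_{r_i}(x_i)) \Bigr),
\]
which we must bound by $C\eps$. The outer piece $E(u, U^{r_i}_{\rho_i}) \leq E(u, U_i) \leq \eps$ is immediate; the inner piece $E(u, U^{\rho_i}_{\rho_i/2}(x_i))$ requires propagating the smallness of the boundary trace on $\partial D_{r_i}$ inward, using the $\eps$-regularity of Lemma~\ref{lemma:epsreg} together with the small $E_\dbar$ hypothesis on $\Omega$. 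Part (b) follows by the same argument with the outer disk $D_{\rho_0}(x_0)$ treated as an additional bubble disk, after rescaling by $\rho_0$ so that the hypothesis $\|\T(u)\| \leq \rho_0^{-1}$ becomes $\|\T(u)\| \leq 1$.

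The main obstacle is the transfer of smallness from the outer annulus $U_i$ across $\partial D_{r_i}$ into the inner annulus $U^{\rho_i}_{\rho_i/2}(x_i)$, which sits inside the removed disks and is not covered by the hypotheses directly. This is what distinguishes the estimate on $\hat\Omega$ from the easier analogous bound on $\Omega'$.
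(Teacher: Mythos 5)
Your cut-and-paste construction (Fubini to find a good circle $\partial D_{r_i}$ with small trace energy, capping off by a nearly-constant extension, then invoking the identity $E(\tilde u)-4\pi\deg(\tilde u)=2E_{\dbar}(\tilde u)$) is exactly the standard argument behind Topping's Lemma 2.23, which is all the paper itself offers by way of proof; through the Chern--Weil step your write-up is correct.

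The genuine gap is the final step. You cannot ``propagate smallness inward'' across $\partial D_{r_i}$: $\eps$-regularity needs an energy bound on a neighborhood, and the hypotheses give \emph{no} information inside $D_{\rho_i}(x_i)$ --- the smallness of $E_{\dbar}$ is assumed only on $\Lambda$, which excludes those disks entirely. Indeed the conclusion about $\hat\Lambda=S^2\setminus\cup_i D_{\rho_i/2}(x_i)$ is false as literally stated: take $u$ a degree-one holomorphic map concentrated at scale $\lambda\ll\rho_1$ at a point of $\partial D_{\rho_1/2}(x_1)$. Then $\T(u)=0$, $E_{\dbar}(u,\Lambda)=0$, and $E(u,U_1)=O(\lambda^2/\rho_1^2)$, so the hypotheses hold with $\eps$ arbitrarily small, yet $E(u,\hat\Lambda)\approx 2\pi$, which is not within $C\eps$ of $4\pi\Z$. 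The statement's $\hat\Lambda$ should be read as $\Lambda$ (equivalently $\Omega$); that is the version actually invoked in the proof of Theorem \ref{thm:preloj}. With that reading your argument closes without the problematic step: $E(u,\Lambda)$ and $E(\tilde u,S^2)$ share the common piece $E(u,S^2\setminus\cup_i D_{r_i})$ and differ only by $\sum_i E(u,U^{r_i}_{\rho_i})\le\sum_i E(u,U_i)\le\eps$ on one side and $\sum_i E(\tilde u,D_{r_i})\le C\eps$ on the other, so the triangle inequality with the Chern--Weil bound gives $\dist(E(u,\Lambda),4\pi\Z)\le C\eps$. Your treatment of (b) by capping the outer boundary as well, after rescaling to normalize the tension hypothesis, is fine subject to the same correction.
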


\vspace{5mm}

\subsection{Pre-\L ojasiewicz estimates}

This subsection contains the rudimentary versions of our main estimates. We continue to assume that both the domain and the target are $S^2.$

\begin{thm}[Pre-\L ojasiewicz estimate]\label{thm:preloj} 
Given $k \in \N,$ $K \geq 1,$ and $0 < \beta < \frac12,$ there exists $\eps_3 > 0$ as follows.

Let $x_i \in S^2$ and $0 < \rho_i \leq \lambda  \leq \frac{\pi}{2},$ for $i = 1, \ldots, \ell,$ with $\ell \leq k,$ be such that $D_{2\rho_i}(x_i) \cap D_{2\rho_j}(x_j) = \emptyset$ for $i \neq j.$ Write
$$U_i = U^{2 \rho_i}_{\rho_i}(x_i), \qquad \hat{U}_i = U^{4 \rho_i}_{\rho_i/2}(x_i),$$
\begin{equation*}
\Omega = S^2 \setminus \cup_i D_{\rho_i }(x_i), \qquad
\hat{\Omega} = S^2 \setminus \cup_i D_{\rho_i / 2}(x_i).
\end{equation*}

\vspace{2mm}

\noindent ($a$) Suppose that $u \in W^{2,2} (S^2, S^2)$ satisfies
$$E(u) \leq 4\pi k,$$
$$\|\T(u) \| \leq 1,$$
\begin{equation}\label{preloj:Edassumption} 
\max \{ E(u, \hat{U}_i) , E_\dbar (u, \hat{\Omega} ) \} < \eps_3,
\end{equation}
and, for some $M \geq 1$ and all $i = 1, \ldots, \ell,$
\begin{equation}
\label{preloj:edassumption} E(u, \hat{U}_i) \leq K  E(u, U_i) + M \rho_i^2 \| \T(u) \|_{L^2(\hat{U}_i)}^2 ,
\end{equation}
and
\begin{equation}\label{preloj:MEdEdbarassumption}
M^{-1} \left(  E_{\p}(u_i, U_i ) - \rho_i^2 \| \T(u) \|_{L^2(U_i)}^2 \right) \leq E_{\dbar}(u, U_i) \leq M \left(  E_\p(u, U_i) + \rho_i^2 \| \T(u) \|_{L^2(\hat{U}_i)}^2 \right).
\end{equation}
 Then
\begin{equation*}
\sum_{i=1}^\ell \rho_i^{-\beta} E\left( u, U_i \right) + \dist \left( E(u, \Omega) , \Z \right) \leq C_{  \ref{thm:preloj} a } \left( \|\T(u) \|^2 + \lambda^{1 - 2 \beta } \| \T(u) \| \right).
\end{equation*}

\vspace{2mm}

\noindent ($b$) Let $m \geq 2$ and
$$1 < \alpha = \frac{m + \beta}{m - 1 + 3\beta} <2.$$
Let $j \in \{1, \ldots, \ell\}$ be such that $\rho_{j} = \max_i \rho_i.$ Suppose further that
\begin{equation}\label{preloj:bassumption}
\rho_{j}^{m + 2\beta} \leq M \left( E_\p(u, U_j) + \rho_j^2 \| \T(u) \|^2 \right).
\end{equation}
We then have
\begin{equation}\label{preloj:rhoiestimate}
\max_i \rho_i^{m - 1 + 3\beta} \leq C_{\ref{thm:preloj} b} \| \T(u) \|
\end{equation}
and
\begin{equation}\label{preloj:lojestimate}
\sum_{i=1}^\ell \rho_i^{-\beta} E\left( u, U_i \right) + \dist \left( E(u, \Omega) , \Z \right) \leq C_{  \ref{thm:preloj} b } \|\T(u) \|^\alpha.
\end{equation}
Here $C_{ \ref{thm:preloj}a-b}$ depend on $k, K, M,$ and $\beta.$
\end{thm}
\begin{proof} 
Let $\eps_3$ be the minimum of the constants $\eps_i$ from Lemma \ref{lemma:epsreg} and Propositions \ref{prop:localdominanceoftension}-\ref{prop:globaldominanceoftension}, and let $c_1$ be the constant of Lemma \ref{lemma:supinf} corresponding to the given $K.$
Let $q = \frac{2}{1 + \beta},$ so that $\frac43 \leq q < 2$ and
$$-\beta = 1 - \frac{2}{q}, \qquad 1 - 2 \beta = 3 - \frac{4}{q}, \qquad 2 - \beta =  3 - \frac{2}{q}.$$

\vspace{2mm}

\noindent ($a$) We will first prove
\begin{equation}\label{preloj:claim}
\lambda^{3 - \frac{4}{q} } \| \T \| + \lambda^{3 - \frac{2}{q} } \| \T \|^2 \gtrsim \sum_i \rho_i^{1 - \frac{2}{q}} E_\dbar(u, U_i).
\end{equation}
Let 
$$J = \left\{ i \in \{1, \ldots, \ell\} \mid \rho_i^2 \|\T \|^2_{L^2 \left( \hat{U}_i \right)} \leq \frac{\eps_3}{M^2} E_\dbar(u, U_i) \right\}.$$
Summing over $i \in J^c,$ we have
\begin{equation}\label{preloj:Jest}
\sum_{i \in J^c} \rho_i^{1 - \frac{2}{q} } E_\dbar(u, U_i) \leq \lambda^{3 - \frac{2}{q} } \sum_{i \in J^c} \rho_i^{-2} E_\dbar(u, U_i) \leq \frac{M^2 \lambda^{3 - \frac{2}{q} } }{\eps_3}  \|\T \|^2.
\end{equation}
On the other hand, for $i \in J,$ we have
\begin{equation*}
\rho_i^2 \|\T \|^2_{L^2 \left( \hat{U}_i \right)} \leq \frac{\eps_3}{M^2} E_\dbar(u, U_i).
\end{equation*}
Combined with (\ref{preloj:edassumption}-\ref{preloj:MEdEdbarassumption}), this gives the 
conditions (\ref{qlemma:assumption}) and (\ref{supinf:Tassumption}-\ref{supinf:assumption}).

Let
$$V_i = \left\{ x \in U_i \mid \rho_i^2 e_\p(u)(x) \geq c_{1} E_\p(u, U_i)  \right\}.$$
By Lemma \ref{lemma:qlemma}, we have
\begin{equation*}
E_\dbar(u, U_i) \leq C K \rho_i^{2 - \frac{4}{q} } \| e_{\dbar} \|_{L^{\frac{q}{2}}(V_i)}.
\end{equation*}
By \ref{supinf:area}, we have
\begin{equation*}
|V_i| \geq \left( 1 - (CK)^{-2 } \right) |U_i|.
\end{equation*}
Applying Proposition \ref{prop:globalrepulsion}, we obtain
\begin{equation*}
\begin{split}
C k\| \T \|^2 & \geq \| e_\p e_\dbar \|_{L^{\frac{q}{2}}(S^2) } \\
&  \geq \frac{1}{\ell^{\frac{q}{2}}} \sum_i \| e_\p e_\dbar \|_{L^{\frac{q}{2}}(V_i) } \\
 & \geq \frac{c_{1} }{\ell^{\frac{q}{2}} } \sum_i \rho_i^{-2} E_{\p}(u, U_i) \| e_\dbar \|_{L^{\frac{q}{2}}(V_i) }  \\
& \geq \frac{c_{1} }{MK \ell^{\frac{q}{2}}} \sum_i \rho_i^{ \frac{4}{q} - 4 } E_{\dbar}(u, U_i)^2 \\
& \geq \frac{c_{1} }{MK \ell^{\frac{q}{2} + 1} } \left( \sum_i \rho_i^{\frac{2}{q} - 2 } E_{\dbar}(u, U_i) \right)^2.
\end{split}
\end{equation*}
Rearranging and canceling squares, we get
\begin{equation}\label{preloj:Jcest}
\begin{split}
\sqrt{\frac{C MKk \ell^{\frac{q}{2} + 1} }{c_{1} } } \lambda^{3 - \frac{4}{q} } \| \T \| & \geq \sum_i \rho_i^{1 - \frac{2}{q} } E_{\dbar}(u, U_i) .
\end{split}
\end{equation}
Combining (\ref{preloj:Jest}) and (\ref{preloj:Jcest}) now yields the claim (\ref{preloj:claim}).

Applying Proposition \ref{prop:globaldominanceoftension}, we obtain
\begin{equation*}
 E_\dbar(u, \Omega ) \leq C \left( \|\T \|^2 + \lambda^{1 - 2 \beta } \| \T \| \right).
\end{equation*}
The desired estimate now follows from Lemma \ref{lemma:cutandpaste}$a.$

\vspace{2mm}

\noindent ($b$) We may take $\lambda = \rho_{j}.$ By our assumption (\ref{preloj:bassumption}) and ($a$), we have
\begin{equation*}
\begin{split}
\lambda^{m + \beta} \leq M \left( \lambda^{-\beta} E(u, U_j) + \lambda^{2 - \beta} \| \T \|^2_{L^2(\hat{U}_j)} \right) & \leq C \left( \| \T \| +  \lambda^{1 - 2\beta} \right) \| \T \|
\end{split}
\end{equation*}
and
\begin{equation*}
\begin{split}
0 & \leq  \| \T \|^2 +  \lambda^{1 - 2\beta} \| \T \| - \frac{\lambda^{m + \beta}}{C}.
\end{split}
\end{equation*}
Adding $\left( \lambda^{1 - 2\beta} - \frac{\lambda^{m - 1 + 3\beta}}{2C} \right) \| \T \|  \geq 0$ to the RHS and factoring, we have
\begin{equation*}
\begin{split}
0 & \leq  \left( \| \T \| - \frac{ \lambda^{m - 1 + 3\beta} }{2C} \right) \left( \| \T \| + 2 \lambda^{1 - 2 \beta} \right),
\end{split}
\end{equation*}
which gives the first estimate of ($b$). We therefore have
\begin{equation*}
\begin{split}
\lambda^{1 - 2 \beta} \| \T \| & \leq C  \| \T \|^{\frac{1 - 2 \beta}{m + 3 \beta - 1} + 1} \\
& = C \| \T \|^{\frac{m + \beta}{m + 3 \beta - 1}}.
\end{split}
\end{equation*}
The second estimate of ($b$) now follows from ($a$).
\end{proof}

\begin{thm}[Pre-\L ojasiewicz estimate, local version]\label{thm:localloj}
Let $x_i \in S^2$ and $0 < \rho_i \leq \lambda \leq \frac{\pi}{2},$ for $i = 0, \ldots, \ell,$ be such that
$$D_{2\rho_i}(x_i) \cap D_{2\rho_j}(x_j) = \emptyset$$
for $ i \neq j,$ and
$$\bigcup_{i = 1}^\ell D_{2 \rho_i}(x_i) \subset D_{\rho_0} (x_0).$$
Write $U_i = U^{2 \rho_i}_{\rho_i}(x_i),$ $\hat{U}_i = U^{4 \rho_i}_{\rho_i/2}(x_i),$ and
\begin{equation*}
\begin{split}
\Omega' & = D_{\rho_0 }(x_0) \setminus \cup D_{\rho_i }(x_i) \\
\hat{\Omega}' & = D_{2\rho_0 }(x_0) \setminus \cup D_{\rho_i / 2}(x_i).
\end{split}
\end{equation*}

\vspace{2mm}

\noindent (a) Let $u \in W^{2,2}(S^2, S^2)$ with $\| \T(u) \|_{L^2(\hat{\Omega}' )} \leq \rho_0^{-1},$ and make the assumptions (\ref{preloj:Edassumption}-\ref{preloj:MEdEdbarassumption}) for $i = 0, \ldots, \ell.$ We then have
\begin{equation*}
\sum_{i=0}^\ell \rho_i^{-\beta} E\left( u, U_i \right) + \dist \left( E\left( u, \Omega' \right), \Z \right) \leq C_{\ref{thm:localloj}}\left( \lambda^{2 - \beta} \|\T(u) \|^2+ \lambda^{1 - 2 \beta } \| \T(u) \| \right).
\end{equation*}
Here $C_{\ref{thm:localloj}}$ depends on $k, K, M,$ and $\beta.$

\vspace{2mm}

\noindent (b) Let $\alpha$ be as in Theorem \ref{thm:preloj}$b.$ Suppose further that
$$\rho_0^{m + 2\beta} \leq M \left( E(u, U_0) + \rho_0^2 \| \T (u) \|_{L^2(\hat{\Omega}' )} ^2 \right).$$
Then 
\begin{equation*}
\sum_{i=0}^\ell \rho_i^{-\beta} E\left( u, U_i \right) + \dist \left( E\left( u, \Omega' \right), \Z \right) \leq C_{\ref{thm:localloj}b} \| \T(u) \|^\alpha.
\end{equation*}
Here $C_{\ref{thm:localloj}}$ depends on $k, K, M,\alpha,$ and $\beta.$
\end{thm}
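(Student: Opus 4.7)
The plan is to localize the proof of Theorem \ref{thm:preloj}$a$ essentially verbatim, running the same dichotomy-based argument over the extended index set $i = 0, \ldots, \ell$ (now including the outer annulus $U_0 = U^{2\rho_0}_{\rho_0}(x_0)$), and replacing Proposition \ref{prop:globaldominanceoftension} and Lemma \ref{lemma:cutandpaste}$a$ with their local counterparts \ref{prop:localdominanceoftension} and \ref{lemma:cutandpaste}$b.$ The hypothesis $\|\T(u)\|_{L^2(\hat{\Omega}')} \leq \rho_0^{-1}$ plays the role of the normalization $\|\T\| \leq 1$ in the global argument and, together with $\rho_0 \leq \lambda,$ supplies the extra $\lambda^{2-\beta}$ scaling that appears in the final estimate.

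First I would run the same index dichotomy as in Theorem \ref{thm:preloj}$a,$ splitting $\{0, 1, \ldots, \ell\}$ into the ``tension-dominated'' subset $J^c$ (where $\rho_i^2 \|\T(u)\|^2_{L^2(\hat{U}_i)} > (\eps_2/M^2) E_\dbar(u, U_i)$) and its complement $J.$ The $J^c$-contribution to $\sum_i \rho_i^{-\beta} E_\dbar(u, U_i)$ is bounded directly by $C \lambda^{2-\beta} \|\T(u)\|^2$ using $\rho_i \leq \lambda.$ For $i \in J,$ assumptions (\ref{preloj:edassumption})--(\ref{preloj:MEdEdbarassumption}) put us in the regime of Lemmas \ref{lemma:supinf} and \ref{lemma:qlemma}: Lemma \ref{lemma:supinf} produces sets $V_i \subset U_i$ on which $\rho_i^2 e_\p(u) \gtrsim E_\p(u, U_i),$ and Lemma \ref{lemma:qlemma} relates $E_\dbar(u, U_i)$ to $\|e_\dbar\|_{L^{q/2}(V_i)}$ for $q = 2/(1+\beta).$ Since Proposition \ref{prop:globalrepulsion} is a global statement on $S^2,$ it applies without modification---one simply integrates $e_\p e_\dbar$ over the disjoint union $\cup_i V_i \subset S^2$---producing, just as in Theorem \ref{thm:preloj}$a,$ the weighted estimate
$$\sum_{i=0}^\ell \rho_i^{-\beta} E_\dbar(u, U_i) \leq C \bigl( \lambda^{2-\beta}\|\T(u)\|^2 + \lambda^{1-2\beta}\|\T(u)\| \bigr).$$
The symmetric argument with $\p$ and $\dbar$ swapped (using (\ref{preloj:edassumption}) in place of (\ref{preloj:edbarassumption})) gives the same bound for $\sum \rho_i^{-\beta} E_\p(u, U_i),$ hence for the full $E.$

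Next I would apply Proposition \ref{prop:localdominanceoftension} with the choice $\rho = R = \rho_0$---forced by the matching of $\hat{\Omega}'$ in our setup---to obtain
$$E_\dbar(u, \Omega') \leq C \bigl( \rho_0^2 \|\T(u)\|^2_{L^2(\hat{\Omega}')} + E_\dbar(u, U_0) + \sum_{i=1}^\ell (\rho_i/\rho_0)^\beta E_\dbar(u, U^{\rho_i}_{\rho_i/2}(x_i)) \bigr).$$
Using $E_\dbar(u, U^{\rho_i}_{\rho_i/2}(x_i)) \leq E_\dbar(u, \hat{U}_i),$ assumption (\ref{preloj:edbarassumption}), and $\rho_i \leq \lambda,$ the inner sum is absorbed against the weighted bound from the previous paragraph, giving $E_\dbar(u, \Omega') \leq C (\lambda^{2-\beta}\|\T(u)\|^2 + \lambda^{1-2\beta}\|\T(u)\|).$ Choosing $\eps_2$ small enough that $\sum_{i=0}^\ell E(u, U_i) + E_\dbar(u, \Omega') < \eps_0,$ Lemma \ref{lemma:cutandpaste}$b$ converts this into $\dist(E(u, \hat{\Omega}'), \Z) \leq C (\lambda^{2-\beta}\|\T(u)\|^2 + \lambda^{1-2\beta}\|\T(u)\|);$ the discrepancy $|E(u, \hat{\Omega}') - E(u, \Omega')| \leq E(u, U_0) + \sum_i E(u, \hat{U}_i)$ is itself bounded by the same right-hand side, yielding the claim.

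The main obstacle I anticipate is not the structural adaptation---which is essentially mechanical---but the bookkeeping around the weights $(\rho_i/\rho_0)^\beta$ in Proposition \ref{prop:localdominanceoftension}. The choice $\rho = R = \rho_0$ kills the natural localizing factor $(\rho/R)^\beta,$ so one must instead rely on the $\rho_0^2 \|\T\|^2$ prefactor together with the absorption of the inner-annulus energies into the weighted sum to produce the desired $\lambda^{2-\beta}$ power in the final estimate.
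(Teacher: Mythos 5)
Your proposal is correct and follows essentially the same route as the paper, which proves this theorem in two lines by substituting Proposition \ref{prop:localdominanceoftension} for Proposition \ref{prop:globaldominanceoftension} in the proof of Theorem \ref{thm:preloj}, reusing the weighted bound (\ref{preloj:claim}) (extended to the index $i=0$), and concluding with Lemma \ref{lemma:cutandpaste}$b$. Your reconstruction of the dichotomy, the use of Lemmas \ref{lemma:supinf}--\ref{lemma:qlemma} on the disjoint sets $V_i$, and the bookkeeping of the $\rho_0^2\|\T\|^2$ term all match the intended argument.
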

\begin{proof}
Applying Proposition \ref{prop:localdominanceoftension} in place of Proposition \ref{prop:globaldominanceoftension} in the previous proof, together with (\ref{preloj:claim}), we get 
\begin{equation*}
E_\dbar(u, \Omega') \leq C\left( \lambda^2 \| \T \|^2 + \lambda^{2 - \beta} \|\T \|^2 + \lambda^{1 - 2 \beta } \| \T \| \right).
\end{equation*}
The desired estimate (a) follows from Lemma \ref{lemma:cutandpaste}$b$. The estimate (b) follows as in Theorem \ref{thm:preloj}$b.$
\end{proof}

\vspace{2mm}



\vspace{10mm}

\section{Estimates on energy density in neck regions}

In this section, we prove the lower and upper bounds on the the (anti)holomorphic energy densities that we will later assemble into the needed estimates on an almost-holomorphic bubble tree.

\subsection{Three-annulus estimate}

\begin{lemma}\label{lemma:3int} Suppose that $N$ is K\"ahler. Let $n \in \Z$ with $|n| \leq L,$ $1 < \tau \leq \sigma,$ and $0 < \beta \leq \frac12$ be such that
\begin{equation}\label{3int:betassn}
\frac{2 \sigma^2}{\sigma^2 + 1}  < \sigma^{2 \beta}.
\end{equation}
There exists $\xi_0 > 0,$ depending on $N, L, \tau, \sigma,$ and $\beta,$ as follows.

Given any $x_0 \in S^2$ and $0 < \rho \leq 1,$ let $\hat{U} = U^{\tau \sigma \rho}_{\sigma^{-1} \rho }(x_0).$ Suppose that $u: \hat{U}  \to N$ is a nonconstant $W^{2,2}$ map with
\begin{equation}\label{3int:Eassumption}
E(u, \hat{U}) < \xi_0
\end{equation}
and
\begin{equation}\label{3int:tensionassumption}
\rho^{2}\| \T(u) \|^2_{L^2(\hat{U})} \leq \xi_0 E_\p(u, \hat{U}) .
\end{equation}
Let
$$S(1) = \sup_{\sigma^{-1} \! \rho < r \leq \tau \sigma^{-1} \! \! \rho } r^{-n} \sqrt{ \dashint_{S^1_r(x_0)} e_\p \, d\theta }.$$
$$S(2) = \sup_{\rho \leq r \leq \tau \! \rho } r^{-n} \sqrt{ \dashint_{S^1_r(x_0)} e_\p \, d\theta }.$$
$$S(3) = \sup_{\sigma \! \rho \leq r < \tau \sigma \! \rho } r^{-n} \sqrt{ \dashint_{S^1_r(x_0)} e_\p \, d\theta }.$$
The following implications hold:
\begin{equation}\tag{$a$}\label{3int:1stimpl} S(1) \leq \sigma^{1 - \beta}  S(2) \Rightarrow S(2) < \sigma^{\beta}  S(3)
\end{equation}
\begin{equation}\tag{$b$}\label{3int:2ndimpl}  S(3) \leq\sigma^{1 - \beta}  S(2) \Rightarrow S(2) <  \sigma^\beta S(1).
\end{equation}
\end{lemma}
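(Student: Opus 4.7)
The plan is a compactness/contradiction argument that reduces the statement, in the limit, to an elementary algebraic inequality for sums of exponentials with integer exponents.

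\textbf{Compactness.} Suppose the lemma fails. Then there exist sequences $u_i : \hat U_i \to N$ and parameters $x_i, \rho_i, n_i$ (with $|n_i| \leq L$) satisfying (\ref{3int:Eassumption}) and (\ref{3int:tensionassumption}) with $\xi_0$ replaced by $\xi_i \downarrow 0,$ but for which one of the implications (\ref{3int:1stimpl})--(\ref{3int:2ndimpl}) fails. After an $\SO(3)$ rotation of $S^2,$ passage to a subsequence with $n_i \equiv n$ and $\rho_i \to \rho_\infty \in [0,1],$ and, in the case $\rho_\infty = 0,$ an Euclidean rescaling of the stereographic chart, we may assume $x_i = 0$ and the annulus $\hat U = U^{\sigma^2}_{\sigma^{-1}}(0)$ is fixed. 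Set $\alpha_i = \p u_i/\sqrt{E_\p(u_i,\hat U)},$ so $\|\alpha_i\|_{L^2(\hat U)} = 1.$ Lemma \ref{lemma:epsreg}($b$) combined with (\ref{3int:tensionassumption}) yields a uniform $W^{1,2}$-bound on every compact subannulus; pass to a weak $W^{1,2}$ (strong $L^p$) limit $\alpha \not\equiv 0.$ Since $E(u_i,\hat U) \to 0,$ the $\eps$-regularity forces $u_i$ to converge uniformly to a constant on compact subannuli, trivializing $u_i^* T^{1,0}N$ in the limit and killing the Christoffel contribution to $\dbar_{u_i}\alpha_i.$ Together with $\|\dbar_{u_i}\alpha_i\|_{L^2} = \|\tau(u_i)\|_{L^2}/\sqrt{E_\p(u_i,\hat U)} = O(\sqrt{\xi_i})\to 0,$ this gives $\dbar \alpha = 0$ classically, with $\alpha$ vector-valued in a fixed copy of $\C^{\dim_\C N}.$

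\textbf{Reduction to the algebraic setup.} Hence $\alpha = f(z)\,dz$ with $f(z) = \sum_{k \in \Z} c_k z^k,$ and $\dashint_{S^1_r} |f|^2_{\mathrm{flat}}\,d\theta = \sum_k B_k r^{2k}$ with $B_k = |c_k|^2 \geq 0.$ Using $e_\p = \sigma^{-2}(r)|f|^2_{\mathrm{flat}}$ and the fact that the limiting conformal factor $\sigma^{-2}$ is a polynomial in $r^2$ with nonnegative coefficients, one rewrites
\begin{equation*}
r^{-2n}\dashint_{S^1_r} e_\p\,d\theta = \sum_{m \in \Z} \tilde B_m\, r^{2(m-n)}, \qquad \tilde B_m \geq 0,
\end{equation*}
which is again a nonnegative sum of integer-exponent exponentials in $\log r,$ hence log-convex. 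Its supremum on $[\sigma^{i-2},\sigma^{i-1}]$ is attained at an endpoint; setting $\mu = \sigma^2$ and $a_j := \sum_{\ell \in \Z} b_\ell\, \mu^{j\ell}$ with $b_\ell := \tilde B_{\ell+n} \geq 0,$ we have $S(i)^2 = \max(a_{i-2}, a_{i-1})$ for $i = 1, 2, 3.$ The violated hypothesis/conclusion of (\ref{3int:1stimpl}) passes to the limit, so it suffices to derive a contradiction from
\begin{equation*}
\max(a_{-1}, a_0) \leq \mu^{1-\beta}\max(a_0, a_1) \quad \text{and} \quad \max(a_0, a_1) \geq \mu^\beta \max(a_1, a_2).
\end{equation*}

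\textbf{The algebraic identity.} A direct factorization gives
\begin{equation*}
a_{-1} - (1 + \mu)\,a_0 + \mu\, a_1 = \sum_{\ell \in \Z} b_\ell\, \mu^{-\ell}(1 - \mu^\ell)(1 - \mu^{\ell+1}),
\end{equation*}
and for every integer $\ell$ the product $(1 - \mu^\ell)(1 - \mu^{\ell+1})$ is nonnegative, since the two factors have the same sign unless one of them vanishes at $\ell \in \{0, -1\};$ hence $a_{-1} \geq (1 + \mu)\,a_0 - \mu\, a_1.$ In the failure scenario, the second inequality rules out $\max(a_0, a_1) = a_1$ (it would force $a_1 \geq \mu^\beta a_1$), so $\max(a_0, a_1) = a_0$ and $a_1 \leq \mu^{-\beta} a_0.$ Substituting into the identity,
\begin{equation*}
a_{-1}/a_0 \geq 1 + \mu - \mu^{1-\beta},
\end{equation*}
which by hypothesis (\ref{3int:betassn})---equivalently $\mu + 1 > 2\mu^{1-\beta}$---strictly exceeds $\mu^{1-\beta},$ contradicting $\max(a_{-1}, a_0) \leq \mu^{1-\beta} a_0.$ Implication (\ref{3int:2ndimpl}) follows from the symmetric factorization
\begin{equation*}
a_2 - (1+\mu)\, a_1 + \mu\, a_0 = \mu \sum_{\ell \in \Z} b_\ell\, (\mu^\ell - 1)(\mu^{\ell - 1} - 1) \geq 0.
\end{equation*}

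\textbf{Main obstacle.} The delicate step is the compactness argument: one must verify that the rescaled tensor $\alpha_i$ converges to a genuine, nonzero holomorphic limit $\alpha$ on $\hat U,$ with the conformal factor (and any Euclidean rescaling needed when $\rho_\infty = 0$) cleanly accounted for, so that the quantities $S(i)^2$ truly pass to $\max(a_{i-2}, a_{i-1}).$ Once this is established, the algebraic identity is elementary and the condition (\ref{3int:betassn}) appears as exactly the sharp threshold that makes the chain of inequalities strict.
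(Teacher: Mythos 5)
Your proposal is correct and follows essentially the same route as the paper: a compactness/contradiction reduction to a nontrivial holomorphic $1$-form, followed by the Laurent expansion and a three-point inequality exploiting the integrality of the exponents, with (\ref{3int:betassn}) appearing as the exact threshold. Your algebraic packaging (the factorization $a_{-1}-(1+\mu)a_0+\mu a_1=\sum_\ell b_\ell\,\mu^{-\ell}(1-\mu^\ell)(1-\mu^{\ell+1})\ge 0$ at the grid points $\sigma^{-1},1,\sigma,$ plus log-convexity to place the suprema at endpoints) is an equivalent reformulation of the paper's inequality $\sigma^{2n-1}+\sigma^{1-2n}\ge\sigma+\sigma^{-1}$ applied at the radius where $S(2)$ is attained, and your explicit handling of the conformal factor as a nonnegative-coefficient polynomial in $r^2$ is a nice refinement of what the paper leaves implicit.
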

\begin{proof} We first check the implications when $\p u = g(z) dz $ is a nonzero holomorphic 1-form on a flat annulus $V = U^{\tau \sigma}_{\sigma^{-1}}(0) \subset \C.$  More specifically, we prove:
\begin{equation}\tag{$a'$} S(1) \leq \sqrt{\sigma \left( 1 + \frac{(\sigma - 1)^2}{2\sigma} \right) } S(2) \Rightarrow S(2) \leq \sqrt{ \frac{\sigma }{1 + \frac{(\sigma - 1)^2}{2\sigma} } } S(3)
\end{equation}
\begin{equation}\tag{$b'$}  S(3) \leq \sqrt{\sigma \left( 1 + \frac{(\sigma - 1)^2}{2\sigma} \right) } S(2) \Rightarrow S(2) \leq \sqrt{ \frac{\sigma }{1 + \frac{(\sigma - 1)^2}{2\sigma} } } S(1)
\end{equation}
Under the assumption (\ref{3int:betassn}), these implications are strictly stronger than ($a$) and ($b$).

Dividing $g$ by $S(2) z^n,$ we may reduce to the case $n = 0$ and $S(2) = 1.$
To prove ($a'$) in this case, we use the Laurent expansion of $g,$ which reads
$$g(z) = \sum_{n = -\infty}^\infty a_n z^n.$$
This gives
\begin{equation}\label{defnofF}
F(r)^2 : = \frac{1}{2 \pi }  \int_{S^1_r} |g|^2 \, d\theta = \sum |a_n|^2 r^{2n}.
\end{equation}
Assume that the supremum $S(2) =1$ is attained at $r_0 \in \LB 1, \tau \RB,$ so
\begin{equation}\label{Fsupremum}
F(r_0)^2 = 1 = \sum |a_n|^2 r_0^{2n}.
\end{equation}
We have the identity
\begin{equation*}
1 = \frac{\sigma + \sigma^{-1} }{2 + \frac{(\sigma - 1)^2}{\sigma} },
\end{equation*}
and, for $n \in \Z,$ the inequality
\begin{equation}\label{sigmaidentity}
1 \leq \frac{\sigma^{-2n + 1} + \sigma^{2n-1} }{2 + \frac{(\sigma - 1)^2}{\sigma}}  .
\end{equation}
Inserting (\ref{sigmaidentity}) into (\ref{Fsupremum}) for each $n \in \Z,$ we have
\begin{equation*}
\begin{split}
1 = \sum |a_n|^2 r_0^{2n} & \leq \frac{1}{2 + \frac{(\sigma - 1)^2}{\sigma}}  \sum |a_n|^2 r_0^{2n} \left( \sigma^{-2n + 1}  + \sigma^{2n-1}\right) \\
& = \frac{1}{2 + \frac{(\sigma - 1)^2}{\sigma}} \left( \sigma^{-1} F \left( \sigma^{-1} r_0 \right) + \sigma F \left( \sigma r_0  \right) \right) \\
& \leq \frac{1}{2 + \frac{(\sigma - 1)^2}{\sigma}} \left( \sigma^{-1} S(1)^2 + \sigma S(3)^2 \right).
\end{split}
\end{equation*}
We insert the assumption $S(1)^2 \leq \sigma \left( 1 + \frac{(\sigma - 1)^2}{2\sigma} \right) ,$ and rearrange, to obtain
\begin{equation*}
1\leq \frac{2 \sigma }{2 + \frac{(\sigma - 1)^2}{\sigma}} S(3)^2,
\end{equation*}
establishing ($a'$). The proof of ($b'$) is identical.

The proof of ($a$) and ($b$) now follows from a standard compactness-and-contradiction argument, similar to the proof of Lemma \ref{lemma:supinf}. Suppose that that the implication ($a$) fails for any choice of constant $\xi_0.$ Then we can choose a sequence of maps $u_i : \hat{U} \to N$ with $E(u_i, \hat{U}) \leq \frac{1}{i}$ and
\begin{equation}\label{3int:smalltension}
\| \T(u_i) \|_{L^2(\hat{U})}^2 \leq \frac{E_{\p}(u_i, \hat{U})}{i},
\end{equation}
but for which
\begin{equation}\label{3int:contrad}
\max \{ \sigma^{\beta - 1} S(1), \sigma^\beta S(3) \} \leq S(2)
\end{equation}
for each $i,$ where $S(j)$ is defined with respect to $u_i.$ Let $U'$ be any domain with $$U := U_{\tau \sigma^{-1}}^\sigma \Subset U' \Subset U_{\sigma^{-1}}^{\tau \sigma} = \hat{U}.$$
We have
$$E_\p(u_i, \hat{U})  \leq C_\sigma \left( S(1)^2 + S(3)^2 + E_\p(u_i, U ) \right) \leq C_\sigma \left( S(2)^2 + E_\p(u_i, U ) \right).$$
But by Lemma \ref{lemma:epsreg}$b,$ we have
$$S(2)^2 \leq C' \left( \| \T(u_i) \|^2 + E_\p(u_i, U')\right),$$
where $C'$ depends on $\sigma$ and $U'.$
Together with (\ref{3int:smalltension}), this gives
$$E_\p(u_i, \hat{U})  \leq  C' \left( \frac{E_\p(u_i, \hat{U})}{i}  + E_\p(u_i, U') \right).$$
For $i$ sufficiently large, we may rearrange to obtain
$$E_\p(u_i, \hat{U})  \leq  C' E_\p(u_i, U'),$$
which by (\ref{3int:smalltension}), gives
\begin{equation*}
\| \T(u_i) \|_{L^2(\hat{U})}^2 \leq C' \frac{E_{\p}(u_i, U')}{i}.
\end{equation*}
The argument now proceeds by letting
$$\alpha_i = \frac{\p u_i}{S(2)}$$
and passing to a subsequence to obtain a holomorphic limit on $U',$ as in the proof of Lemma \ref{lemma:supinf}. One can then enlarge $U'$ and pass to a diagonal subsequence to obtain a holomorphic limit on $\hat{U},$ which contradicts the implication ($a'$). The proof of ($b$) is the same.
\end{proof}

\vspace{5mm}

\subsection{Estimates based on 3-annulus estimate}

\begin{prop}\label{prop:Edf} Given $L \in \N,$ there exists $C_L > 0$ as follows. For $0 < \beta \leq \frac12,$ $\sigma$ satisfying 
\begin{equation}\label{Edf:betassn}
\sigma^\beta \geq C_L,
\end{equation}
and $\sqrt[3]{2} \leq \tau \leq \sqrt{\sigma},$ 
there exists $\xi_0 > 0$ as follows.

Let $m,n \in \{ -L, \ldots, L \},$ $0 < \sigma \rho \leq R \leq 1,$ $0 < \xi \leq \xi_0,$ and $0 \leq \beta \leq \frac12.$ Given $u : U^{2R}_{\rho/2} (x_0) \to N,$ let $\delta = \| \T (u) \|_{L^2 \left( U^{2R}_{\rho/2} \right) }.$ 
Suppose that
\begin{equation*}
\sup_{\rho / 2 \leq r \leq R} E(u, U_r^{2r}) < \xi_0.
\end{equation*}
Let $\mu, \nu$ be such that
\begin{equation*}
\mu \geq \max \left\{ \frac{ \sqrt{ E_\p \left(u,  U_{\rho/2}^{\rho}  \right)  } }{\rho} , \xi^{-1} \delta \right\}, \qquad
\nu \geq \max \left\{ \frac{ \sqrt{ E_\p \left(u, U_R^{2R}  \right) } }{R} , \xi^{-1} \delta \right\}.
\end{equation*}
Write
\begin{equation*}
f_\p(r) = \max \left\{ \sqrt{ \dashint_{S^1_r(x_0)} e_\p(u) \, d\theta}, \xi^{-1} \delta \right\}
\end{equation*}
and, for any $t > 1,$
$$F^\p_t(r) = \sup_{r \leq s \leq t r } \left( \frac{s}{\rho} \right)^{m} f_{\p} (s),$$
$$G^\p_t(r) = \sup_{t^{-1} r \leq s \leq r } \left( \frac{R}{s} \right)^{n} f_{\p}(s).$$
The following implications hold.

\vspace{2mm}

\noindent (a) If
\begin{equation}\label{Edf:aassumption}
\sigma^{1 - \beta} F^\p_{\tau}(\sigma \rho) \geq \mu 
\end{equation}
then
\begin{equation*}
C^{-1}_{m} \sigma^{ - 1}  \left( \frac{\rho}{r} \right)^{\beta} \mu \leq F^\p_\sigma (r) \leq C_m \left( \frac{R}{\sigma r} \right)^{\beta} \left( \frac{R}{\rho} \right)^m \nu 
\end{equation*}
for each $\sigma \tau \rho \leq r \leq R/\sigma.$

\vspace{2mm}

\noindent (b) If
\begin{equation}\label{Edf:bassumption}
\sigma^{1 - \beta} G^\p_\tau(R/\sigma) \geq \nu 
\end{equation}
then
\begin{equation*}
C^{-1}_{n} \sigma^{- 1} \left( \frac{ r}{R} \right)^{\beta} \nu \leq G^\p_\sigma(r) \leq C_n \left( \frac{ r}{ \sigma \rho } \right)^{\beta} \left( \frac{R}{\rho} \right)^n \mu. 
\end{equation*}
for each $\tau \rho \leq r \leq R / \sigma \tau .$

\vspace{2mm}

\noindent (c) If both (\ref{Edf:aassumption}) and (\ref{Edf:bassumption}) hold,
then
\begin{equation*}
\begin{split}
f_\p(r) \geq C_{L}^{-1} \left( \sup_{r/\sqrt{2} \leq s \leq \sqrt{2} r} f_\p(s) + \sigma^{-2} \mu \left( \frac{\rho}{r} \right)^{m + \beta} + \sigma^{- 2} \nu \left( \frac{r}{R} \right)^{n + \beta} \right)
\end{split}
\end{equation*}
for each $\rho \leq r \leq R.$

The same statements hold after replacing $e_\p$ by $e_\dbar$ or $e.$

\end{prop}
\begin{proof}

We shall assume that $\tau = \sqrt[3]{2}$ and $\sigma = \tau^M$ for some $M \in \N$ sufficiently large. The general case follows easily from this case.

We first claim that 
for $\xi_0 > 0$ sufficiently small, we have the implication 
\begin{equation}\label{Edf:implication}
F_{\tau}(\sigma^{-1} r) \leq \sigma^{1 - \beta} F_{\tau} (r) \quad \Rightarrow \quad F_{\tau} (r) \leq \sigma^\beta F_{\tau}(\sigma r).
\end{equation}
as long as $\frac{\sigma \rho}{2} \leq r \leq \frac{2R}{\sigma \tau}.$

Assume first that
\begin{equation}\label{Edf:firstcase}
\sup_{r \leq s \leq \tau r } \sqrt{ \dashint_{S^1_r(x_0)} e_\p(u) \, d\theta} \leq \xi^{-1} \delta.
\end{equation}
Supposing that $m \geq 0,$ we have $F_{\tau} (r) = \left( \dfrac{{\tau} r}{\rho} \right)^{m} \xi^{-1} \delta$ by definition. We also have
$$F_{\tau}({\sigma} r) \geq \left( \frac{\sigma \tau r}{\rho} \right)^{m} \xi^{-1} \delta = \sigma^m F_{\tau}(r) \geq F_{\tau}(r),$$
so the RHS of (\ref{Edf:implication}) is automatically true. Supposing that $m < 0,$ we have $F_{\tau} (r) = \left( \dfrac{r}{\rho} \right)^{m} \xi^{-1} \delta$ by definition. We also have
$$F_{\tau}({\sigma^{-1}} r) \geq \left( \frac{\sigma^{-1} r}{\rho} \right)^{m} \xi^{-1} \delta = \sigma^{-m} F_{\tau}(r) > \sigma^{1 - \beta} F_{\tau}(r),$$
since $-m > 1 - \beta,$ so
the LHS of (\ref{Edf:implication}) is automatically false.
This establishes the implication (\ref{Edf:implication}) in the case (\ref{Edf:firstcase}).

On the other hand, if (\ref{Edf:firstcase}) is not true, we have
$$\sup_{r \leq s \leq {\tau} r } \sqrt{ \dashint_{S^1_r(x_0)} e_\p(u) \, d\theta} > \xi^{-1} \delta.$$
In this case, we have
$$C_{m,{\tau} } \xi^2 E_\p(U_{r}^{ {\tau} r}) \geq r^2 \delta^2,$$
which guarantees the assumption (\ref{3int:tensionassumption}). 
Lemma \ref{lemma:3int}\ref{3int:1stimpl} now completes the proof of the implication (\ref{Edf:implication}).

Similarly, for $\xi_0$ sufficiently small, we can assume that the implication
\begin{equation}\label{Edf:tauimplication}
F_{\tau}(\tau^{-1} r) \leq \tau^{\frac12 } F_{\tau} (r) \quad \Rightarrow \quad F_{\tau} (r) \leq \tau^{\frac12} F_{\tau}(\tau r).
\end{equation}
is true.

To prove ($a$), first note that Lemma \ref{lemma:epsreg}$b$ implies
$$F^\p_\tau\left( \frac{\rho}{\tau^2} \right) \leq C_m \mu.$$
Combined with the the assumption (\ref{Edf:aassumption}), we have
$$F^\p_\tau\left( \frac{\rho}{\tau^2} \right) \leq C_m \sigma^{1 - \beta} F_{\tau}^\p(\sigma \rho).$$
We can replace $\beta$ by $\beta / 2$ to absorb the constant. 
This gives 
$$F^\p_\tau\left( \frac{\rho}{\tau^2} \right) \leq \sigma^{1 - \beta} F_{\tau}^\p\left( \sigma \rho \right).$$
We now have the LHS of the implication (\ref{Edf:implication}), where $r = \sigma \rho$ and $\sigma \tau^2$ plays the role of $\sigma.$ We henceforth replace $\sigma$ by $\sigma \tau^2$ and $\rho$ by $\rho / \tau^2,$ which does not affect the conclusions.

Applying the implication repeatedly, we obtain
$$F^\p_\tau\left( \sigma^{i} \rho \right) \leq \sigma^{\beta} F^\p_\tau(\sigma^{i + 1} \rho)$$
for all $i$ in the relevant range, and
$$F^\p_\tau(\sigma^i \rho) \geq \sigma^{(i-1)\beta} F^\p_\tau(\sigma \rho) \geq C_m^{-1} \mu.$$
These are sufficient to prove the first inequality in (a) for the case that $r = \sigma^i \rho.$
Moreover, there must exist $i \in \{1, \ldots, M \}$ (where $\sigma = \tau^M$) such that
\begin{equation}\label{Edf:'tauimplication}
F^\p_\tau\left( \tau^{i} \rho \right) \leq \tau^{1/2} F^\p_\tau(\tau^{i + 1} \rho),
\end{equation}
since otherwise the LHS of (\ref{Edf:implication}) could not hold. Consequently, the inequality (\ref{Edf:'tauimplication}) holds for all $i \geq M$ in the relevant range. The same is true after putting $\sqrt{\sigma}$ in the role of $\tau.$ Drawing the same implications for $\tau$ and $\sqrt{\sigma}$ as we did for $\sigma,$ we have enough to obtain both implications of (a) for all $r$ in the stated range. The proof of (b) is similar.

To obtain the first estimate in ($c$), namely $f(r) \geq C_{L}^{-1} \sup_{r/2 \leq s \leq 2r} f(s),$ observe that (\ref{Edf:tauimplication}) and the corresponding statement for $G_\tau$ imply (\ref{supinf:assumption}). Then the desired bound can be derived from (\ref{supinf:supinf}). The last two estimates in ($c$) then follow from ($a$) and ($b$).
\end{proof}

\begin{cor}\label{cor:EdEdbarcomparable}
Let 
$u$ be as in the previous proposition. 
Let $\mu, \nu$ be such that
\begin{equation*}
\mu \geq \max \left\{ \frac{ \sqrt{ E \left(u,  U_{\rho/2}^{\rho}  \right)  } }{\rho} , \xi^{-1} \delta \right\}
\end{equation*}
and
\begin{equation*}
\nu \geq \max \left\{ \frac{ \sqrt{ E \left(u, U_R^{2R}  \right) } }{R} , \xi^{-1} \delta \right\}.
\end{equation*}
Write
\begin{equation*}
f_\p(r) = \max \left\{ \sqrt{ \dashint_{S^1_r(x_0)} e_\p(u) \, d\theta}, \xi^{-1} \delta \right\}, \qquad f_\dbar(r) = \max \left\{ \sqrt{ \dashint_{S^1_r(x_0)} e_\dbar(u) \, d\theta}, \xi^{-1} \delta \right\},
\end{equation*}
\begin{equation*}
f(r) = \max \left\{ \sqrt{ \dashint_{S^1_r(x_0)} e(u) \, d\theta}, \xi^{-1} \delta \right\},
\end{equation*}
$$F^\dbar_\tau(r) = \sup_{r \leq s \leq \tau r } \left( \frac{s}{\rho} \right)^{m} f_\dbar(s), \qquad G^\p_\tau(r) = \sup_{\tau^{-1} r \leq s \leq r } \left( \frac{R}{s} \right)^{n} f_\p(s).$$

\vspace{2mm}

\noindent If
\begin{equation}\label{Edbarcomparable:aassumption}
\mu \leq \sigma^{1 - \beta} F^\dbar_{\tau}(\sigma \rho) 
\end{equation}
and
\begin{equation}\label{Edbarcomparable:bassumption}
\nu \leq \sigma^{1 - \beta} G^\p_\tau(R/\sigma) 
\end{equation}
then 
there exists $\sigma \rho \leq \bar{\rho} \leq R/\sigma$ such that
\begin{equation*}
\begin{split}
\inf_{\bar{\rho} \leq r \leq 2 \bar{\rho}} f_\p(r) \wedge f_{\dbar}(r) \geq C_{\sigma}^{-1} \left( \sup_{\bar{\rho}/2\leq s \leq 4 \bar{\rho}} f(s) + \mu \left( \frac{\rho}{\bar{\rho}} \right)^{m + \beta} + \nu \left( \frac{\bar{\rho}}{R} \right)^{n + \beta} \right).
\end{split}
\end{equation*}
\end{cor}
\begin{proof}
The assumptions (\ref{Edbarcomparable:aassumption}-\ref{Edbarcomparable:bassumption}) imply that the full energy density $e(u)$ obeys the hypotheses of Proposition \ref{prop:Edf}($c$), so we conclude
\begin{equation*}
\begin{split}
\inf_{r/2 \leq s \leq 2 r} f(s) \geq C_{L}^{-1} \left( f(r) + \sigma^{-2} \mu \left( \frac{\rho}{\bar{\rho}} \right)^{m + \beta} + \sigma^{-2} \nu \left( \frac{\bar{\rho}}{R} \right)^{n + \beta} \right).
\end{split}
\end{equation*}
We can also apply Proposition \ref{prop:Edf}$a$ to $e_\p$ to learn that
$$f_\p(r_1) \geq C_\sigma^{-1} f(r_1)$$
for some $\sigma \rho \leq r_1 \leq 2 \sigma \rho.$ Applying Proposition \ref{prop:Edf}$b$ to $e_{\dbar},$ we learn that
$$f_\p(r_2) \geq C_\sigma^{-1} f(r_2)$$
for some $R/ 2 \sigma \leq r_2 \leq R / \sigma.$
Using the fact that $f^2(r) = f_\p^2(r) + f_{\dbar}^2(r)$ and the Intermediate Value Theorem, we obtain the desired conclusion.
\end{proof}

\vspace{2mm}



\subsection{Multi-annulus estimates}\label{ss:multiannulus}
We now give a generalization that is needed in order to pass the lower bounds across ghost bubbles.

\begin{lemma}\label{lemma:multiannulus} 

Let $0 < \beta \leq \frac12$ and
$n_i \in \Z, $ $i = 1, \ldots, \ell \leq k,$ with $|n_i| \leq L,$ and put
\begin{equation}\label{mdef}
m = -\sum_{i = 1}^\ell n_i.
\end{equation}
Let $0 < \zeta \leq \frac12.$ There exists $\sigma_0 > 1,$ depending on $k, L$ and $\zeta,$ such that for all $\sigma \geq \sigma_0,$ the following holds.

Let $z_i, z_i' \in B_{1/2}(0) \subset \C,$ $\zeta_i \in \LB \zeta, \frac12 \RB,$ and $\zeta'_i \in \left( 0, \zeta_i / \sigma \RB,$
for $i = 1, \ldots, \ell.$ Assume that 
$$B_{2 \zeta_i }(z_i) \cap B_{2 \zeta_j }(z_j) = \emptyset,$$
for $1 \leq i \neq j \leq \ell,$ and
$$B_{\zeta'_i }(z'_i) \subset B_{\zeta_i / \sigma }(z_i)$$
for $i = 1, \ldots, \ell.$
Write
\begin{equation*}
\begin{split}
\Lambda & = B_{1}(0) \setminus \cup_{i = 1}^\ell \bar{B}_{\zeta_i }(z_i), \\
\hat{\Lambda} & = B_{\sigma}(0) \setminus \cup_{i = 1}^\ell \bar{B}_{\zeta'_i}(z'_i).
\end{split}
\end{equation*}
Suppose that $f(z)$ is a nonzero holomorphic function on $\hat{\Lambda}.$
Let
$$S_i(1) = \sqrt{ \left( \zeta'_i \right)^{-2n_i} \dashint_{S^1_{ \zeta'_i }\left( z'_i \right)} |f(z)|^2  \, d\theta }$$
for $i = 1, \ldots, \ell,$
$$S(2) = \sqrt{\dashint_\Lambda |f(z)|^2\, dz},$$
and
$$S(3) = \sqrt{  \sigma^{2 m} \dashint_{S^1_\sigma (0) } |f(z)|^2  \, d\theta }.$$
The following implication holds:
\begin{equation*}
\begin{split}
\sum_{i = 1}^\ell S_i(1)  \leq \sigma^{1 - \beta} S(2) \quad \Rightarrow \quad S(2) < \sigma^\beta S(3).
\end{split}
\end{equation*}
\end{lemma}

\begin{proof}  Dividing $f(z)$ by a rational function with a pole of order $n_i$ at $z_i,$ we can reduce to the case $m = 0 = n_i$ for all $i.$ This only affects the estimates up to constants depending on $L,k,$ and $\zeta,$ which can be absorbed into $\sigma_0.$

We now use the following ``multiple'' Laurent expansion: 
\begin{equation}
\begin{split}
f(z) & = \frac{1}{2 \pi i} \int_{ |w| = \sigma } \frac{f(w)}{w - z} dw - \sum_{i = 1}^\ell \frac{1}{2 \pi i} \int_{ |w - z'_i| = \zeta'_i} \frac{f(w)}{w - z} dw \\
& = \sum_{n = 0}^\infty a_n z^n + \sum_{i = 1}^\ell \sum_{n = 1}^\infty b_{n,i} (z - z'_i)^{-n} \\
& =: f_0(z) + \sum_{i = 1}^\ell f_i(z).
\end{split}
\end{equation}
The expansion is obtained by inserting the identity $\frac{1}{w - z} = \sum_{n = 0}^\infty \frac{z^n}{w^{n + 1}}$ 
in the first term and $\frac{1}{w - z} = \sum_{i = 0}^\infty \frac{-(w - z'_i)^n}{(z - z'_i)^{n + 1}}$ 
in the remaining terms. 
By construction, $a_1, a_2, \ldots$ are the nonnegative Fourier coefficients of $f(z)$ along the circle $|z| = \sigma$ and $b_{1,i}, b_{2,i}, \ldots$ are the negative Fourier coefficients of $f(z)$ along $|z - z'_i| = \zeta'_i.$
We therefore have the bounds
\begin{equation*}
\sum_{n = 0}^\infty |a_n|^2 \sigma^{2n} \leq S(3)^2
\end{equation*}
and
\begin{equation*}
\sum_{n = 1}^\infty |b_{n,i}|^2 \left( \zeta_i' \right)^{-2n} = (\zeta_i' )^{-2} \sum_{n = 1}^\infty |b_{n,i}|^2 \left( \zeta_i' \right)^{2-2n} \leq 
S_i(1)^2
\end{equation*}
for $i = 1, \ldots, \ell,$ which gives
\begin{equation*}
\sum_{n = 1}^\infty |b_{n,i}|^2 \left( \frac{\sigma}{\zeta_i} \right)^{2n-2} \leq  \frac{\zeta_i^2}{\sigma^2} S_i(1)^2.
\end{equation*}
We have
\begin{equation*}
\dashint_{\Lambda} |f_0(z)|^2 \, dV_z \leq \frac{2}{\pi} \int_{B_{1}} |f_0(z)|^2 \, dV_z = \frac{2}{\pi} \sum |a_n|^2 \int_{B_{1}} |z|^{2n} \, dV_z = 4 \sum_{n = 0}^\infty \frac{ |a_n|^2}{2n + 2} \leq 2 S(3)^2 
\end{equation*}
and, for $i = 1, \ldots, \ell,$
\begin{equation*}
\begin{split}
\dashint_{\Lambda} |f_i(z)|^2 \, dV_z \leq \frac{2}{\pi} \int_{U^{1}_{\zeta_i} (z_i)} |f_i(z)|^2 \, dV_z
& = \frac{2}{\pi} \sum_{n = 1}^\infty |b_{n,i}|^2 \int_{ U^{1}_{\zeta_i} (z_i) } |z - z'_i |^{-2n} \, dV_z \\
& \leq 4 \left( |b_{1,i}|^2 (-\log \zeta_i) + \sum_{n = 2}^\infty \frac{|b_{n,i}|^2 }{ \zeta_i^{2n - 2}(2n - 2) } \right) \\
& \leq \frac{4 \zeta_i^2  }{\sigma^2} S_i(1)^2 \left( - \log \zeta_i + \frac{1}{\sigma^2}\right) \\
& \leq 2 \frac{S_i(1)^2}{\sigma^2}.
\end{split}
\end{equation*}
Putting these together, we have
\[
\begin{split}
S(2) = \sqrt{ \int_{\Lambda} |f(z)|^2 \, dV_z } & \leq \sqrt{ \int_{\Lambda} |f_0(z)|^2 \, dV_z } + \sum_{i = 1}^\ell \sqrt{ \int_{\Lambda} |f_i(z)|^2 \, dV_z } \\
& \leq  \sqrt{2} \left( S(3) + \frac{ \sum_{i = 1}^\ell S_i }{\sigma} \right).
\end{split}
\]
This clearly implies the claimed inequality.
\end{proof}

Recall that $D_r(x) \subset S^2$ denotes the image of the ball of radius $r$ in the stereographic chart centered at $x.$ We shall also write $S^1_r(x) = \p D_r(x).$

\begin{thm}\label{thm:puncturedsphere} Suppose again that $N$ is K\"ahler. Let $k, L \in \N$ and $0 < \beta, \zeta \leq \frac12.$
There exist $\sigma_0 >1,$ depending on $k, L, \beta,$ and $\zeta,$
such that assuming $\sigma \geq \sigma_0,$ 
there exists $\xi_1 > 0,$ depending 
also on $N$ and $\sigma,$ as follows.

Fix
$n_i \in \Z, $ $i = 0, \ldots, \ell \leq k,$ with $|n_i| \leq L$ and 
\begin{equation}\label{sumniis2}
\sum_{i = 0}^\ell n_i = -2.
\end{equation}
Let $x_i \in S^2$ for $i = 0, \ldots, \ell,$ and $\zeta_i \in \LB \zeta, \frac12 \RB$ for $i = 0, \ldots, \ell.$ Assume that 
$$D_{2\zeta_i}(x_i) \cap D_{2 \zeta_j }(x_j) = \emptyset,$$
for $0 \leq i \neq j \leq \ell.$ Write
\begin{equation*}
\begin{split}
\Lambda & = S^2 \setminus \bigcup_{i = 0}^\ell \bar{D}_{\zeta_i }(x_i), \\
\hat{\Lambda} & = S^2 \setminus \bigcup_{i = 0}^\ell \bar{D}_{\zeta_i / \sigma }(x_i).
\end{split}
\end{equation*}
Suppose that $u : \hat{\Lambda} \to N$ is a nonconstant $W^{2,2}$ map with
\begin{equation*}
E(u,\hat{\Lambda} ) < \xi_1
\end{equation*}
and
\begin{equation*}
 \| \T(u) \|^2_{L^2(\hat{\Lambda})} \leq \xi_1 E_\p(u, \hat{\Lambda}) .
\end{equation*}
Let
$$S_i = \sqrt{ \sup_{ \frac{\zeta_i }{\sigma } < r \leq \frac{ 2 \zeta_i }{ \sigma } } r ^{-2n_i} \dashint_{S^1_r(x_i)} e_\p (u)  \, d\theta }$$
for $i = 0, \ldots, \ell.$ 
The following implication holds:
\begin{equation}\label{puncturedsphere:implication}
\begin{split}
\sum_{i = 1}^\ell S_i \leq \sigma^{1 - \beta} E_\p (u, \Lambda) \Rightarrow  E_\p (u, \Lambda) < \sigma^{\beta} S_0.
\end{split}
\end{equation}
The same result holds after replacing $e_\p$ by $e_{\dbar}$ or $e.$
\end{thm}

\begin{proof}
By a compactness-and-contradiction argument as in the proofs of Lemmas \ref{lemma:supinf}-\ref{lemma:3int}, we can reduce to checking the case of a holomorphic 1-form $\alpha$ on $\hat{\Lambda}.$

Working in a stereographic chart centered at the antipodal point $\hat{x}_0$ of $x_0,$ we can write
$$\alpha = f(z) \, dz$$
for a holomorphic function $f(z).$ 
We rescale the chart so that the circle $S^1_{\zeta_0}(x_0)$ corresponds to the unit circle in $\C.$
The circle $S^1_{\zeta_0/\sigma }(x_0) \subset S^2$ goes over to the circle $\{ |z| = \sigma \} \subset \C,$ and for $i \geq 1,$ the circles $S^1_{\zeta_i / \sigma} (x_i) $ go over to circles $\{|z - z'_i| = \zeta_i' \} \subset \C$ for some points $z'_i \in D_{1/2}(0) \subset \C$ and radii $\zeta_i'.$ Since $|dz| = (1 + |z|^2)/\sqrt{2},$ we can take $m = n_0 + 2$ in the previous Lemma and obtain the desired result.
\end{proof}

\begin{cor}\label{cor:multiannulus} 

Without assuming (\ref{sumniis2}), fix
$n_i \in \Z, $ $i = 1, \ldots, \ell,$ with $|n_i| \leq L,$ and define $m$ by (\ref{mdef}).
Let $0 < \rho \leq 1,$ $x_0 \in S^2,$ and $x_i \in D_{\rho/2}(\hat{x}_0),$ for $i = 1, \ldots, \ell,$ with $\ell \leq k.$ Assume that 
$$D_{2 \rho \zeta}(x_i) \cap D_{2 \rho \zeta}(x_j) = \emptyset,$$
for $1 \leq i \neq j \leq \ell.$ 
Write
\begin{equation*}
\begin{split}
\Lambda' & = D_{\rho}(\hat{x}_0) \setminus \cup_i \bar{D}_{\rho \zeta }(x_i), \\
\hat{\Lambda}' & = D_{\rho \sigma}(\hat{x}_0) \setminus \cup_i \bar{D}_{\rho \zeta / \sigma }(x_i).
\end{split}
\end{equation*}
Suppose that $u : \hat{\Lambda}' \to N$ is a nonconstant $W^{2,2}$ map with
\begin{equation*}
E(u,\hat{\Lambda}' ) < \xi_1
\end{equation*}
and
\begin{equation}\label{multiannulus:corThypothesis}
\rho^2 \| \T(u) \|^2_{L^2(\hat{\Lambda}')} \leq \xi_1 E_\p(u, \hat{\Lambda}') .
\end{equation}
Let
$$S_i(1) = \sqrt{ \sup_{\frac{\rho\zeta}{\sigma} \leq r \leq \frac{2\rho \zeta}{ \sigma} } \left( \frac{r}{\rho} \right)^{-2n_i} \dashint_{S^1_r(x_i)} e_\p (u)  \, d\theta }$$
for $i = 1, \ldots, \ell,$ and
$$S(2) = \frac{\sqrt{E_\p (u, \Lambda')} }{\rho}.$$
Let
$$S(3) = \sqrt{ \sup_{ \frac{\rho \sigma}{ 2 } \leq r \leq \rho \sigma } \left( \frac{r}{\rho} \right)^{2 m} \left( 1 + r^2\right)^{-2} \dashint_{S^1_r(\hat{x}_0) } e_\p (u)  \, d\theta }.$$
The following implications hold:
\begin{equation}\tag{$a$}
\begin{split}
\sum_{i = 1}^\ell S_i(1)  \leq \sigma^{1 - \beta} S(2) \quad \Rightarrow \quad S(2) < \sigma^\beta S(3).
\end{split}
\end{equation}
\begin{equation}\tag{$b$}
\begin{split}
\sum_{i \neq j} S_i(1) + S(3) \leq \sigma^{1-\beta} S(2) \quad \Rightarrow \quad  S(2) < \sigma^\beta S_j(1).
\end{split}
\end{equation}
The same result holds after replacing $e_\p$ by $e_{\dbar}$ or $e.$
\end{cor}

\begin{proof} These both follow from the previous theorem (with $n_0 = m - 2$) after rescaling so that $\rho = 1.$
\end{proof}

\vspace{10mm}

\section{Bubble-tree estimates}


\begin{defn}
A sequence $u_i : \Sigma \to  N$ of $W^{2,2}$ maps with $E(u_i) \leq 4 \pi k$ and
$\|\T(u_i)\|_{L^2} \to 0$
as $i \to \infty$ will be called an \emph{almost-harmonic sequence}.
\end{defn}

\begin{defn}\label{defn:bubbletreedata} A collection
$$\mathcal{B} = \{ J,\{z_j \}, \{y_{j_1, \ldots, j_q} \}, \{ \phi_{j_1, \ldots, j_q } \}, u_\infty \}$$
will be referred to as \emph{bubble-tree data}. Here, $J = \{(j_1, \ldots, j_q ) \}$ 
is a finite indexing set; $\{z_{j} \} \subset \Sigma,$ for $(j) \in J,$ is a finite set of mutually distinct points; $y_{j_1, \ldots, j_q} \in B_{1/2}(0) \subset \R^2,$ for $(j_1, \ldots, j_q) \in J$ ($q \geq 2$), are finite sets of points such that $y_{j_1, \ldots, j_q,k} \neq y_{j_1, \ldots, j_q, \ell}$ for $k \neq \ell$;
and
\begin{equation*}
u_\infty = \phi_0 : \Sigma \to N,
\end{equation*}
\begin{equation*}
\phi_{j_1, \ldots, j_q} : \R^2 \to N,
\end{equation*}
for $(j_1, \ldots, j_q) \in J,$ are finite-energy harmonic maps, some of which may be constant. If $(j_1, \ldots, j_q) \in J$ is a terminal index ({\it i.e.} $(j_1, \ldots, j_q, j) \not\in J$ for all $j$), then $\phi_{j_1, \ldots, j_q}$ is required to be nonconstant.

Furthermore, $J$ is required to have a ``tree'' structure rooted at $(0)\in J,$ i.e., if $\vec{\jmath} = (j_1, \ldots, j_q) \in J$ then $\vec{\jmath}' = (j_1, \ldots, j_p) \in J$ for $p = 1, \ldots, q-1.$ We say that an index $\vec{\jmath}' \in J$ of this form ``precedes'' $\vec{\jmath}  \in J,$ and $\vec{\jmath}$ ``succeeds'' $\vec{\jmath}'.$ The initial (zero-length) index is denoted $(0).$

\end{defn}

\begin{defn}[\cite{songwaldron}, Definition 4.2] Given a $W^{1,2}$ map $u: B_R(x_0) \to N,$ the \emph{outer energy scale} $\lambda_{\eps, R, x_0}(u)$ is the smallest number $\lambda \geq 0$ such that
\begin{equation}\label{energyscale}
\sup_{\lambda < \rho < R} E \left( u, U^{\rho}_{\rho/2}(x_0) \right) < \eps.
\end{equation}
Note that $\lambda = R$ satisfies (\ref{energyscale}) vacuously, so $0 \leq \lambda_{\eps, R, x_0}(u) \leq R$ by definition.
\end{defn}

\begin{thm}[Bubble-tree compactness for almost-harmonic sequences]\label{thm:bubbletree}
Given an almost-harmonic sequence $u_i : \Sigma \to N,$ we may pass to a subsequence (again called $u_i$) which ``converges in the bubble-tree sense,'' i.e., for which there exists a set of bubble-tree data $\mathcal{B}$ as in Definition \ref{defn:bubbletreedata},
as follows.

Given any $0 < \eps \leq \eps_0,$ there exists $0 < \zeta \leq \frac12$ and for all $i$ sufficiently large, 
points $x^i_{j_1, \ldots, j_q} \in \Sigma$ and positive numbers $\lambda^i_{j_1, \ldots, j_q},$ such that:

\begin{itemize}

\item $|J| \leq L,$ where $L \in \N$ depends only on $k,$ 

\item $x^i_{j_1, \ldots, j_q} \to z_{j_1}$ as $i \to \infty,$ 

\item $\lambda^i_{j_1} = \lambda_{\zeta, \eps, x^i_{j_1} }(u_i) \to 0 $ as $i \to \infty$

\item $\lambda^i_{j_1, \ldots, j_q} = \lambda_{\zeta \lambda^i_{j_1, \ldots, j_{q - 1}}, \eps, x^i_{j_1, \ldots, j_q} }(u_i(t_i)),$ 


\item $x^i_{j_1, \ldots, j_q} = x^i_{j_1, \ldots, j_{q - 1}} + \lambda^i_{j_1, \ldots, j_{q-1}} y_{j_1, \ldots, j_q},$

\item $\dfrac{\lambda^i_{j_1, \ldots, j_q}}{ \lambda^i_{j_1, \ldots, j_{q - 1}} } \to 0$ as $i \to \infty,$


\item $u_i \left( x^i_{ j_1, \ldots, j_q} + \lambda^i_{j_1, \ldots, j_q} y\right) \to \phi_{j_1, \ldots, j_q}(y) $ in $W^{2,2}_{loc} \left( \R^2 \setminus \cup_{k} y_{j_1, \ldots, j_{q}, k} \right)$ as $i \to \infty,$

\item For all $i$ sufficiently large, we have $E \left(u_i, U_{\lambda^i_{j_1, \ldots, j_q}}^{\zeta \lambda^i_{j_1, \ldots, j_{q - 1}} }(x^i_{j_1, \ldots, j_q}) \right) \leq C \eps$ for a universal constant $C.$


\end{itemize}
\end{thm}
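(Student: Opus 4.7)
The plan is to construct the bubble tree inductively over its depth, with termination guaranteed by the total energy bound $4\pi L$ together with an $\eps_0$ lower bound on the energy of each new concentration. This is the classical construction of Ding--Tian \cite{dingtian}, Qing \cite{qinghmfsphere}, and Wang \cite{wangchangyoububblephenomena}, reformulated in the outer-energy-scale language of Definition above.

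For the base case, pass to a subsequence along which $u_i \rightharpoonup \phi_0 = u_\infty$ weakly in $W^{1,2}(\Sigma, N)$ and $e(u_i) \, dV \rightharpoonup \mu$ as Radon measures. The set $\{z_j\} := \{z \in \Sigma \mid \mu(\{z\}) \geq \eps_0\}$ has cardinality at most $4\pi L/\eps_0$. Away from $\{z_j\}$, Lemma \ref{lemma:epsreg} yields uniform $W^{2,2}_{loc}$ bounds on the $u_i$, hence strong convergence in $W^{2,2}_{loc}(\Sigma \setminus \{z_j\})$. Since $\|\T(u_i)\|_{L^2} \to 0$, the limit $\phi_0$ is weakly harmonic away from the $z_j$, and by Sacks--Uhlenbeck removability extends to a smooth harmonic map on $\Sigma$. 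Pick $\zeta_0 \leq \tfrac{1}{2}$ so that the disks $D_{2\zeta_0}(z_j)$ are pairwise disjoint.

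For the inductive step at an index $(j_1, \ldots, j_{q-1})$, apply the analogous analysis to the rescaled sequence $\tilde{u}_i(y) := u_i(x^i_{j_1, \ldots, j_{q-1}} + \lambda^i_{j_1, \ldots, j_{q-1}} y)$ on the expanding disk $D_{\zeta/\lambda^i_{j_1, \ldots, j_{q-1}}}(0)$. Weak limits of the energy-density measures identify the residual concentration points in $\overline{B_1(0)}$, which define the next-level offsets $y_{j_1, \ldots, j_{q-1}, k}$; the positions and outer energy scales $x^i_{j_1, \ldots, j_q}, \lambda^i_{j_1, \ldots, j_q}$ are then given directly by the formulas in the statement. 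Passing to a further subsequence, Lemma \ref{lemma:epsreg} and strong $W^{2,2}_{loc}$-compactness away from the next concentration set produce a harmonic map $\phi_{j_1, \ldots, j_q} : \R^2 \to N$ of finite energy (by Fatou), which extends smoothly to $S^2$ by removability. Scale separation $\lambda^i_{j_1, \ldots, j_q}/\lambda^i_{j_1, \ldots, j_{q-1}} \to 0$ follows because genuine concentration occurs strictly below scale $\lambda^i_{j_1, \ldots, j_{q-1}}$, and the small-energy-on-annuli conclusion is built into the definition of the outer energy scale applied successively at consecutive depths.

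The induction terminates in at most $4\pi L/\eps_0$ levels, since each new nontrivial concentration contributes at least $\eps_0$ of energy. It remains to verify that terminal-index bubbles are nonconstant: at such a node, the rescaled sequence has no further concentration, so uniform $\eps$-regularity forces strong $W^{2,2}_{loc}$ convergence to $\phi_{j_1, \ldots, j_q}$; but by the definition of $\lambda^i_{j_1, \ldots, j_q}$ as an outer energy scale, the rescaled energy on $U_{1/2}^{1}(0)$ stays $\geq \eps$, which rules out a constant limit. The main obstacle will be the careful bookkeeping that synchronizes the outer-energy-scale conditions across all depths of the tree---in particular, verifying that the prescribed choice $\zeta = \zeta_0$ is admissible precisely when $\eps = \eps_0$, and handling ghost (constant) bubbles at intermediate nodes, where the rescaled sequence loses all its mass to finer scales that must be resolved at the next level.
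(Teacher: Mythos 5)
Your proposal follows the standard Sacks--Uhlenbeck/Ding--Tian/Parker induction on concentration scales, which is exactly what the paper intends: the paper omits the proof entirely, declaring it ``by now standard'' and citing Song--Waldron for a sketch. So in approach you are aligned with the paper.

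One justification in your sketch is not adequate, however. The final bullet of the theorem asserts that for \emph{every} $\eps' \leq \eps$ there is a $\zeta'$ with $E\bigl(u_i, U_{(\zeta')^{-1}\lambda_{j_1,\ldots,j_q}}^{\zeta' \lambda_{j_1,\ldots,j_{q-1}}}\bigr) < \eps'$ --- smallness of the \emph{total} energy of the whole neck annulus, at an arbitrarily fine threshold. You claim this is ``built into the definition of the outer energy scale applied successively at consecutive depths,'' but the outer energy scale only guarantees that each \emph{dyadic} annulus $U^{\rho}_{\rho/2}$ between the two scales carries energy below the \emph{fixed} threshold $\eps$; summing over the $O(\log(\lambda_{q-1}/\lambda_q))$ dyadic pieces gives no bound at all, and even a single dyadic piece is only controlled at level $\eps$, not $\eps'$. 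This bullet is precisely the no-energy-loss-in-necks statement of the energy identity for almost-harmonic sequences (Ding--Tian, Qing, Lin--Wang), proved via angular-energy decay or Lorentz-duality estimates on the neck; it is the genuinely nontrivial analytic input of the theorem and is also what the rest of the paper leans on (e.g.\ to verify the hypothesis (\ref{Edfnonconst:zeta'assn}) in the proof of Theorem \ref{thm:lowerboundsone}). You should invoke that neck analysis explicitly rather than the outer-energy-scale definition. The remainder of your outline (identification of concentration points, rescaling, removable singularities, termination by energy quantization, nonconstancy of terminal bubbles from the definition of the outer energy scale) is correct and standard.
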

\begin{proof} The proof is by now standard (see Song-Waldron \cite{songwaldron} for a recent sketch), and we omit it.
\end{proof}

\begin{center}

\includegraphics[scale=.4]{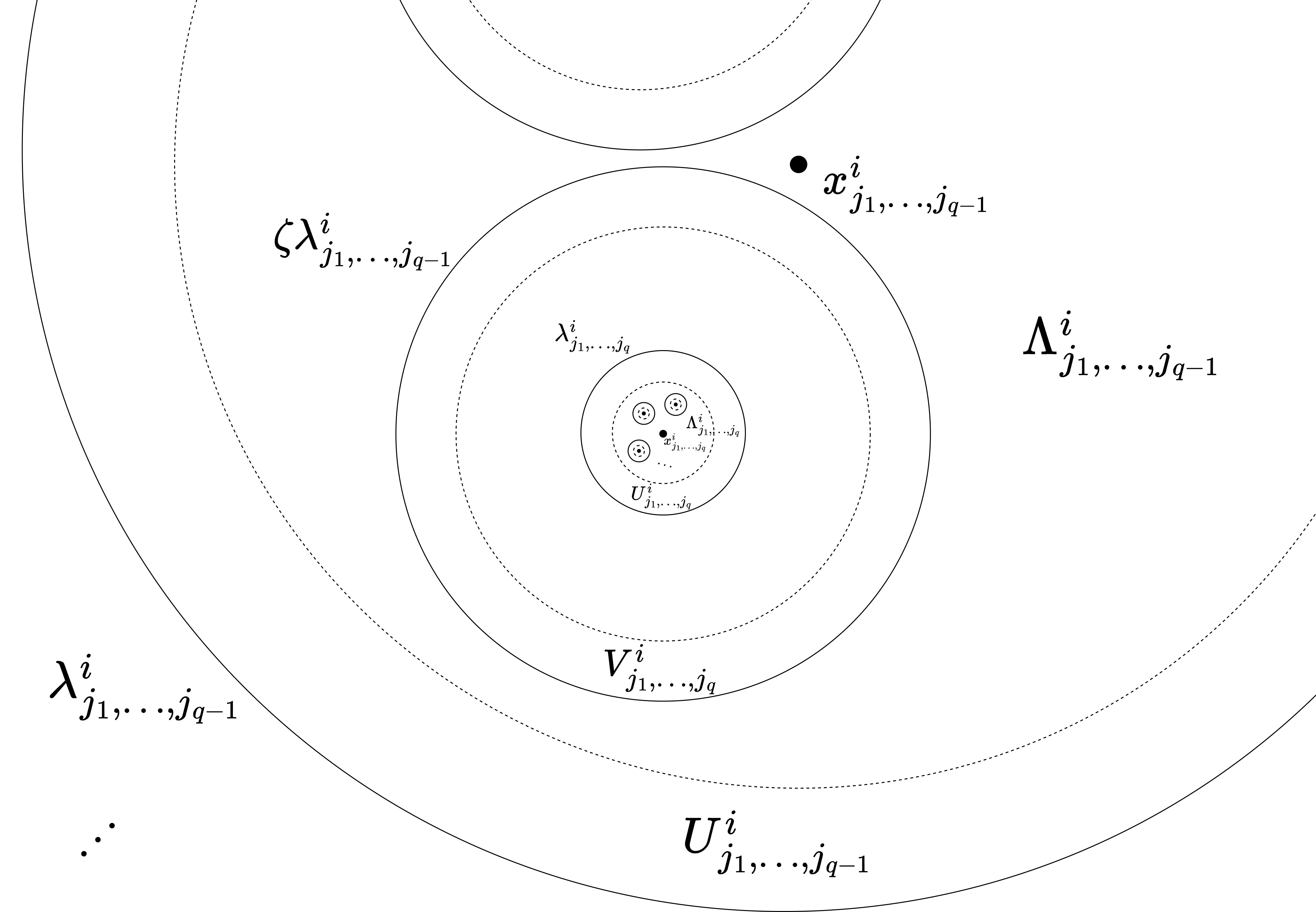}

\end{center}


\begin{defn}\label{defn:jandJ} Given a set of bubble-tree data $\mathcal{B},$ write
$$J = J_0 \sqcup J_1,$$
where $J_0$ are the constant (a.k.a ghost) indices and $J_1$ are the nonconstant indices. Define two maps, $\cev{\jmath}(\cdot)$ and $\vec{J}(\cdot),$ as follows. Given any index $\vec{\jmath} = (j_1, \ldots, j_q),$ write
$$\cev{\jmath}(\vec{\jmath }) = ( j_1, \ldots, j_p) \in J_1 \cup \{(0)\},$$
with $p \leq q,$ for the nearest nonconstant index preceding $\vec{\jmath}$ in the bubble tree, or $\cev{\jmath}(\vec{\jmath }) = (0)$ if there are only ghost (i.e. constant) bubbles between $\vec{\jmath}$ and the body map.
Write
$$\vec{J}(\vec{\jmath}) = \{ (j_1, \ldots, j_q, \ldots, j_r) \} \subset J_1$$
for the set of nonconstant indices succeeding $\vec{\jmath}$ in the bubble tree, i.e., $(j_1, \ldots, j_q, \ldots, j_r) \in \vec{J}(\vec{\jmath})$ iff $(j_1, \ldots, j_q, \ldots, j_r) \in J_1$ but $(j_1, \ldots, j_p, \ldots, j_s) \in J_0$ for any $p \leq s < r.$ 
\end{defn}

\begin{defn}\label{defn:Mexponents}
Suppose that $\phi_{j_1, \ldots, j_q}$ and $u_\infty$ are each either holomorphic, antiholomorphic, or constant, as is the case for $\Sigma = N = S^2.$ For each $\vec{\jmath} = (j_1, \ldots, j_q) \in J,$ define an integer $M_{j_1, \ldots, j_q} \geq 2$ as follows. For $\vec{\jmath} = (j_1, \ldots, j_q) \in J_1,$ 
let
$$M_{j_1, \ldots, j_q} = \text{order-of-vanishing of $d \phi_{j_1, \ldots, j_q}$ at $\infty$} .$$
For a ghost index $\vec{\jmath} \in J_0,$ define
\begin{equation*}
M_{j_1, \ldots, j_q} = \sum M_{j_1, \ldots, j_q, \ell_{q + 1}, \ldots, \ell_n},
\end{equation*}
where the sum runs over 
$(j_1, \ldots, j_q, \ell_{q + 1}, \ldots, \ell_n) \in \vec{J} (\vec{\jmath}).$
\end{defn}
\begin{defn}\label{defn:Nexponents}
Define nonnegative integers $N_{j_1, \ldots, j_q}$ as follows. Supposing first that
$$(j_1, \ldots, j_{q - 1}) \in J_1,$$
let
$$N_{j_1, \ldots, j_q} = \text{order-of-vanishing of $d \phi_{j_1, \ldots, j_{q-1}}$ at $y_{j_1, \ldots, j_q}$}.$$
Suppose next that the preceding index $(j_1, \ldots, j_{q - 1}) \in J_0$ is a ghost index. Assuming $q = 1$ (i.e. the body map $\phi_0$ is constant), define
$$N_{j_1} = \begin{cases} 0 & \text{ if only } (j_1) \in J \\ \sum_{j \neq j_1} M_{j} - 2 & \text{ otherwise.} \end{cases}$$
Assuming $q > 1,$ write 
$\cev{\jmath}(\vec{\jmath}) = (j_1, \ldots, j_p),$ and define
\begin{equation*}
N_{j_1, \ldots, j_q} = N_{j_1, \ldots, j_p}  + \sum_{n = p + 1}^q \sum_{j \neq j_n} M_{j_1, \ldots, j_p, \ldots, j_{n-1}, j}.
\end{equation*}
\end{defn}
\begin{rmk}\label{rmk:L}
The integers $M_{j_1, \ldots, j_q}$ and $N_{j_1, \ldots, j_q}$ are evidently bounded above by an integer depending only on $k$ and $|J|.$ We denote this integer again by $L,$ so that
$$M_{\vec{\jmath}}, N_{\vec{\jmath}} \leq L$$
for all $\vec{\jmath} \in J.$
\end{rmk}

Now, given $0 < \eps \leq \eps_0,$ let
$$x^i_{j_1, \ldots, j_q} \in S^2, \qquad 0 < \lambda^i_{j_1, \ldots, j_q} \leq \zeta,$$
for $i = 1, 2, \ldots,$ be the sequences of points and scales guaranteed by Theorem \ref{thm:bubbletree}. For $q \geq 1,$ also let 
$$\Lambda^i_{j_1, \ldots, j_q} = D_{\lambda^i_{j_1, \ldots, j_q} } \left(x^i_{j_1, \ldots, j_q} \right) \setminus \cup_k \bar{D}_{\zeta \lambda^i_{j_1, \ldots, j_q} }\left(x^i_{j_1, \ldots, j_q, k} \right),$$
\begin{equation*}
U^i_{j_1, \ldots, j_q} = U^{\lambda^i_{j_1, \ldots, j_q} }_{\lambda^i_{j_1, \ldots, j_q} / 2}\left(x^i_{j_1, \ldots, j_q} \right) \subset \Lambda^i_{j_1, \ldots, j_q},
\end{equation*}
and
\begin{equation*}
V^i_{j_1, \ldots, j_{q } } = U^{2\zeta \lambda^i_{j_1, \ldots, j_{q -1}} }_{\zeta \lambda^i_{j_1, \ldots, j_{q - 1}}} \left( x^i_{j_1, \ldots, j_{q} } \right) \subset \Lambda^i_{j_1, \ldots, j_{q-1}}.
\end{equation*}
In addition, let
\begin{equation*}
\Lambda^i_0 = S^2 \setminus \cup_j \bar{D}_{\zeta} \left( x^i_j \right).
\end{equation*}
Given $\xi_1 > 0,$ for $i$ large enough, we may assume that for each ghost index $(j_1, \ldots, j_q) \in J_0,$ we have
\begin{equation}\label{lowpartial}
E \left(u_i, \Lambda^i_{j_1, \ldots, j_q} \right) < \xi_1.
\end{equation}
We are finally ready to prove our main bubble-tree estimate.

\begin{thm}\label{thm:lowerboundsone} Let $0 < \beta < \frac12.$ Suppose that $u_i : S^2 \to N,$ K\"ahler, is a bubble-tree convergent almost-harmonic sequence, with bubble-tree data $\mathcal{B},$ for which the body map and all bubbles are either holomorphic or antiholomorphic.\footnote{These assumptions can likely be removed if the exponents $M$ and $N$ are allowed to be arbitrary.} Write
$$\delta_i := \| \T(u_i) \| \to 0.$$
Given $0 < \eps \leq \eps_0,$ 
there exist $\kappa, \xi > 0,$ depending on $\eps,$ $\beta,$ and $\mathcal{B},$ as follows.

Let $\zeta,$ $\Lambda^i_{\vec{\jmath}},$ 
$M_{\vec{\jmath}},$ $N_{\vec{\jmath} },$ etc., be as above, 
and put
$$\mu^i_{\vec{\jmath} } = \max \left \{ \frac{ \sqrt{ \min \left\{ E \left( u_i, \Lambda^i_{\vec{\jmath} } \right), \eps \right\} } }{ \lambda^i_{\vec{\jmath} } }, \xi^{-1} \delta_i \right\}$$
and
\begin{equation}\label{lowerboundsone:fidef}
f^i_{\vec{\jmath} }(r) = \max \left\{ \sqrt{ \dashint_{S^1_r(x^i_{\vec{\jmath}} )} e(u_i) \, d\theta}, \xi^{-1} \delta_i \right\}
\end{equation}
for $\lambda^i_{j_1, \ldots, j_q} \leq r \leq \zeta \lambda^i_{j_1, \ldots, j_{q - 1}}$ (where $\vec{\jmath} = (j_1, \ldots, j_q)$).

\vspace{2mm}

\noindent ($a$) For $i$ sufficiently large, we have
\begin{equation}\label{lowerboundsone:aestimate}
f^i_{j_1, \ldots, j_q}(r) \geq \kappa \left( \mu^i_{j_1, \ldots, j_q} \left( \frac{\lambda^i_{j_1, \ldots, j_q} }{r} \right)^{M_{j_1, \ldots, j_{q} } + \beta } + \mu^i_{j_1, \ldots,j_{q - 1} } \left( \frac{r}{\lambda^i_{j_1, \ldots, j_{q - 1} } } \right)^{N_{j_1, \ldots, j_q} + \beta } \right)
\end{equation}
for each $\vec{\jmath} \in J$ and $\lambda^i_{j_1, \ldots, j_q} \leq r \leq \zeta \lambda^i_{j_1, \ldots, j_{q - 1}},$ as well as
\begin{equation}\label{lowerboundsone:bestimate}
f^i_{j_1, \ldots, j_q}(r) \geq c_{L} \sup_{r/2\leq s \leq 2 r} f^i_{j_1, \ldots, j_q}(s)
\end{equation}
for $\kappa^{-1} \lambda^i_{j_1, \ldots, j_q} \leq r \leq \kappa \zeta \lambda^i_{j_1, \ldots, j_{q - 1}}.$

\vspace{2mm}

\noindent ($b$) Suppose that 
\begin{equation}\label{lowerboundsone:ehypothesis}
\begin{split}
E_\p \left( u_i, \Lambda^i_{j_1, \ldots, j_q} \right) + (\lambda^i_{j_1, \ldots, j_q} \delta_i)^2 & \geq \xi E \left( u_i, \Lambda^i_{j_1, \ldots, j_q} \right) \,\, \text{ and }\\
E_\dbar \left( u_i, \Lambda^i_{j_1, \ldots, j_{q-1}} \right) + (\lambda^i_{j_1, \ldots, j_{q-1}} \delta_i)^2 & \geq \xi E \left( u_i, \Lambda^i_{j_1, \ldots, j_{q-1}} \right)
\end{split}
\end{equation}
or vice-versa (with $\p$ and $\dbar$ interchanged).
Then 
there exists $\rho_i \in \LB 2\lambda^i_{j_1, \ldots, j_q}, \frac12 \zeta \lambda^i_{j_1, \ldots, j_{q-1} } \RB $ such that
\begin{equation}\label{lowerboundsone:eestimate}
\begin{split}
\inf_{\rho_i \leq r \leq 2 \rho_i } f^i_{\p, \vec{\jmath}} (r) \wedge f^i_{\dbar, \vec{\jmath} }(r) \geq \kappa \sup_{\rho_i/2\leq s \leq 4 \rho} f^i_{ \vec{\jmath}} (s).
\end{split}
\end{equation}

\end{thm}
\begin{proof}  



\begin{claim}$\mathit{1}.$ We can choose $\sigma$ large enough that for each $\vec{\jmath} \in J$ and $i$ sufficiently large, there holds
\begin{equation}\label{lowerboundsone:claim1}
\mu^i_{\vec{\jmath}} \leq \sigma^{1 - \beta} \sup_{\frac12 \sigma \lambda^i_{\vec{\jmath}} \leq r \leq \sigma \lambda^i_{\vec{\jmath}} } \left( \frac{r}{\lambda^i_{\vec{\jmath}} } \right)^{M_{\vec{\jmath}}} f^i_{\vec{\jmath}} (r).
\end{equation}
\end{claim}
\begin{claimproof} First assume that $\vec{\jmath} \in J_1$ is a nonconstant index. Since $u_i$ is tending to $\phi_{\vec{\jmath}}$ in $W_{loc}^{2,2}$ in a rescaled sense, (\ref{lowerboundsone:claim1}) follows from the definition of $M_{\vec{\jmath }}$ 
for $\sigma$ and $i$ sufficiently large (with $\sigma$ depending only on $\mathcal{B}$).


We can now prove the claim for the ghost indices $J_0$ by ``inward'' induction on the bubble tree, the base case being that of the terminal bubbles (which all belong to $J_1$). Let $\vec{\jmath} = (j_1, \ldots, j_q) \in J_0.$ Note from Definition \ref{defn:Mexponents} that
\begin{equation}\label{lowerboundsone:Mghostdef}
M_{j_1, \ldots, j_q} = \sum_k M_{j_1, \ldots, j_q, k}.
\end{equation}
Assume that the claim has been proved for all indices succeeding $\vec{\jmath};$ in particular
\begin{equation*}
\mu^i_{j_1, \ldots, j_q, k} \leq \sigma^{1 - \beta} \sup_{\frac12 \sigma \lambda^i_{j_1, \ldots, j_{q},k } \leq r \leq \sigma \lambda^i_{j_1, \ldots, j_{q},k } } \left( \frac{r}{\lambda^i_{j_1, \ldots, j_{q},k } } \right)^{M_{j_1, \ldots, j_{q},k} } f^i_{j_1, \ldots, j_{q},k } (r)
\end{equation*}
for each $k$ such that $(j_1, \ldots, j_{q},k) \in J.$  
The second inequality of Proposition \ref{prop:Edf}$a$ then gives
\begin{equation}\label{lowerboundsone:Mlowerbound}
\begin{split}
\sup_{ \frac{ \zeta \lambda^i_{j_1, \ldots, j_{q}} }{\sigma } \leq r \leq \zeta \lambda^i_{j_1, \ldots, j_{q} } } \left( \frac{r}{ \zeta \lambda^i_{j_1, \ldots, j_{q} } } \right)^{M_{j_1, \ldots, j_{q},k} + \beta} f^i_{j_1, \ldots, j_{q},k } (r) & \leq \frac{ \sqrt{ E \left(u,  V_{j_1, \ldots, j_{q}, k }  \right)  } }{ \lambda^i_{j_1, \ldots, j_{q}} } \vee \xi^{-1} \delta_i \\
& \leq \mu^i_{j_1, \ldots, j_q}.
\end{split}
\end{equation}
First notice that if $\mu^i_{j_1, \ldots, j_q} = \xi^{-1} \delta_i,$ then the claim (\ref{lowerboundsone:claim1}) is trivial since $M_{j_1, \ldots, j_q} \geq 0.$ Otherwise, we have $\lambda^i_{\vec{\jmath} } \delta_i < \xi \sqrt{ E \left( u_i, \Lambda^i_{\vec{\jmath} } \right) },$ which guarantees (\ref{multiannulus:corThypothesis}).
We may apply Corollary \ref{cor:multiannulus}$a,$ with $m = M^\p_{j_1, \ldots, j_q}$ and $n_k = -M^\p_{j_1, \ldots, j_q, k}$ (in view of (\ref{lowerboundsone:Mghostdef}) and (\ref{lowerboundsone:Mlowerbound})), to complete the induction step.
\end{claimproof}


\begin{claim}$\mathit{2}.$ For each $\vec{\jmath} = (j_1, \ldots, j_q) \in J$ and $i$ sufficiently large, we have
\begin{equation}\label{lowerboundsone:claim2}
\mu^i_{j_1, \ldots, j_{q-1}} \leq \sigma^{1 - \beta} \sup_{ \frac{\zeta \lambda^i_{ j_1, \ldots, j_{q-1} } }{2 \sigma} \leq r \leq \frac{\zeta \lambda^i_{j_1, \ldots, j_{q-1} } }{\sigma} } \left( \frac{ \zeta \lambda^i_{j_1, \ldots, j_{q - 1} } }{r} \right)^{N_{\vec{\jmath}}} f^i_{\vec{\jmath}} (r).
\end{equation}
\end{claim}
\begin{claimproof} Assuming that $(j_1, \ldots, j_{q-1}) \in J_1$ is a nonconstant index, taking $\sigma$ larger if necessary, the claim follows as above from Definition \ref{defn:Nexponents}. Also as above, the claim is trivial in the case that $\mu^i_{j_1, \ldots, j_q} = \xi^{-1} \delta_i,$ so we may always assume that $\lambda^i_{ \vec{\jmath} } \delta_i < \xi \sqrt{ E \left( u_i, \Lambda^i_{\vec{\jmath} } \right) }$ for $\xi$ depending only on $\mathcal{B}.$

Next, assume that the body map ($\vec{\jmath} = (0)$) is a ghost; then (\ref{lowpartial}) holds for $i$ sufficiently large. Fix a nonzero index $j_1$ such that $(j_1) \in J.$ If there is only one such index, then we have $N_{j_1} = 0$ by Definition \ref{defn:Nexponents}.  The LHS of (\ref{puncturedsphere:implication}) is vacuously true, so we may apply Theorem \ref{thm:puncturedsphere} with $n_{j_1} = -2$ to obtain a stronger inequality than (\ref{lowerboundsone:claim2}).

Next, assume that $(j_1)$ is not the only singleton index in $J,$ i.e., there is more than one bubble point. Let $n_{j_1} = N_{j_1}$ and $n_k = - M_{k}$ for $k \neq j_1.$ By Definition \ref{defn:Nexponents}, the hypothesis (\ref{sumniis2}) is satisfied. In view of (\ref{lowerboundsone:Mlowerbound}), we may apply Theorem \ref{thm:puncturedsphere} to obtain the claim for $\vec{\jmath} = (j_1).$

We can now prove the claim for the remaining indices by ``outward'' induction on the bubble tree. Assume that Claim 2 has been proven for all indices up to and including a given index $\vec{\jmath} = (j_1, \ldots, j_q);$ we shall prove it for $(j_1, \ldots, j_q, j_{q + 1}) \in J_0.$ 
Note that from Definition \ref{defn:Nexponents}, we have
\begin{equation*}
N_{j_1, \ldots, j_q, j_{q + 1} } = N_{j_1, \ldots, j_q} + \sum_{k\neq j_{q + 1} } M_{j_1, \ldots, j_q, k}.
\end{equation*}
Setting $m = -N_{j_1, \ldots, j_q},$ $n_{j_{q + 1}} = N_{j_1, \ldots, j_{q + 1}},$ and $n_k = -M_{j_1, \ldots, j_q, k}$ for $k \neq j_{q + 1},$ in view of the induction hypothesis (\ref{lowerboundsone:claim2}) and the established inequality (\ref{lowerboundsone:Mlowerbound}), Corollary \ref{cor:multiannulus}$b$ implies the claim for $(j_1, \ldots, j_{q + 1}),$ completing the induction.
\end{claimproof}

Claims 1 and 2 imply the hypotheses (\ref{Edf:aassumption}) and (\ref{Edf:bassumption}) of Proposition \ref{prop:Edf}$c,$ which gives the bounds of ($a$); in the statement, we absorb all dependence on $\sigma$ into the constant $\kappa.$ The result of ($b$) follows from Corollary \ref{cor:EdEdbarcomparable}.

\end{proof}

\begin{cor}\label{cor:repulsion} 

Let $\vec{\jmath} = (j_1, \ldots, j_q) \in J,$ and let
$$\cev{\jmath}(\vec{\jmath}) = (j_1, \ldots, j_p)$$
be as in Definition \ref{defn:jandJ}.

\vspace{2mm}

\noindent (a) Suppose that $\cev{\jmath}(\vec{\jmath}) \in J_1,$ i.e. either $\cev{\jmath}(\vec{\jmath}) \neq (0)$ or $\cev{\jmath}(\vec{\jmath}) = (0)$ and the body map is nonconstant. We then have
\begin{equation*}
f^i_{\vec{\jmath}}(r) \geq \kappa \eps \left( \frac{r}{\lambda^i_{\cev{\jmath} (\vec{\jmath})} } \right)^{N_{\vec{\jmath}} + \beta}
\end{equation*}
for $\lambda^i_{j_1, \ldots, j_q} \leq r \leq \zeta \lambda^i_{j_1, \ldots, j_{q - 1}}.$ In particular, if the body map is nonconstant, then we have
\begin{equation}\label{repulsion:universallowerbound}
f^i_{\vec{\jmath}}(r) \geq \kappa \eps r^{N_{\vec{\jmath}} + \beta}
\end{equation}
for all indices $\vec{\jmath} \in J$ and $\lambda^i_{j_1, \ldots, j_q} \leq r \leq \zeta \lambda^i_{j_1, \ldots, j_{q - 1}}.$

\vspace{2mm}

\noindent (b) Let $\vec{k} = (j_1, \ldots, j_q, k_{m + 1}, \ldots, k_n) \in \vec{J}(\vec{\jmath} ).$ We have
\begin{equation*}
f^i_{\vec{\jmath}}(r) \geq \kappa \eps \left( \frac{\lambda^i_{ \vec{k} } }{r} \right)^{M_{\vec{\jmath} } + \beta}.
\end{equation*}

\noindent (c) Let $p \leq m \leq q$ and consider three indices of the form 
$$\vec{\ell} = (j_1, \ldots, j_m),$$
$$\vec{\jmath} = (j_1, \ldots, j_m, j_{m +1}, \ldots, j_q),$$
$$\vec{k} = (j_1, \ldots, j_m, k_{m+ 1}, \ldots, k_n).$$
Suppose that $\vec{k} \in \vec{J}(\vec{\ell})$ 
and $\vec{J} (\vec{\jmath}) \subset \vec{J}(\vec{\ell}),$ i.e., there are no nonconstant indices between $\vec{\ell}$ and $\vec{k}$ or between $\vec{\ell}$ and $\vec{\jmath}.$
For $i$ sufficiently large, we then have
\begin{equation*}
f^i_{j_1, \ldots, j_q}(r) \geq \kappa \eps \frac{ \left( \lambda^i_{\vec{k}} \right)^{M^{\p}_{ \vec{\ell} } + \beta} r^{N^{\p}_{\vec{\jmath}} +\beta} }{ \left( \lambda^i_{\vec{\ell}}  \right)^{M^{\p}_{ \vec{\ell} } + N^{\p}_{\vec{\jmath} } + 2 \beta} }.
\end{equation*}
\end{cor}
\begin{proof} According to the hypotheses, the relevant indices are connected only by antiholomorphic or flat maps. The estimates then follow by running Theorem \ref{thm:lowerboundsone}$a$ across the connecting indices.
\end{proof}

We can encapsulate the results of this section in the following statement.

\begin{cor} Given $k \in \N,$ 
$\beta > 0,$ and $\eps > 0,$ there exists $\delta > 0$ as follows. Given any $u \in W^{2,2} \left( S^2, S^2 \right)$ with $E(u) \leq 4 \pi k$ and
\begin{equation*}
\| \T(u) \| < \delta,
\end{equation*}
there exists at least one bubble-tree decomposition of $u$ such that the estimates of Theorem \ref{thm:bubbletree} and Corollary \ref{cor:repulsion}$a$-$c$ hold, with $\kappa = \xi = \delta.$
\end{cor}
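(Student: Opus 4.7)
The plan is a standard compactness-and-contradiction argument converting the sequence-level estimates of Theorem~\ref{thm:lowerboundsone} and Corollary~\ref{cor:repulsion} into a single-map statement. Suppose the corollary fails, so that for some $k \in \N$, $\beta > 0$ and $\eps > 0$ there is a sequence $u_i \in W^{2,2}(S^2,S^2)$ with $E(u_i) \leq 4\pi k$ and $\delta_i := \|\T(u_i)\| < 1/i$, for which \emph{no} bubble-tree decomposition of $u_i$ satisfies the conclusions of Theorem~\ref{thm:bubbletree} and Corollary~\ref{cor:repulsion}(a)--(c) with $\kappa = \xi = \delta_i$. Then $\{u_i\}$ is an almost-harmonic sequence in the sense of Section~5, so Theorem~\ref{thm:bubbletree} lets me pass to a subsequence (still denoted $u_i$) that converges in the bubble-tree sense to some bubble-tree data $\mathcal{B}$, with associated points $x^i_{\vec{\jmath}}$ and scales $\lambda^i_{\vec{\jmath}}$. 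Since $N = S^2$, the classical result of Lemaire--Wood forces the body map and every bubble to be either holomorphic, antiholomorphic, or constant, so the hypotheses of Theorem~\ref{thm:lowerboundsone} are met.

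With the parameters $\eps$ and $\beta$ fixed, Theorem~\ref{thm:lowerboundsone} and Corollary~\ref{cor:repulsion} then produce constants $\kappa^*, \xi^* > 0$ (depending on $\eps$, $\beta$, and $\mathcal{B}$) such that for $i$ sufficiently large, every estimate in Theorem~\ref{thm:lowerboundsone}(a)--(b) and Corollary~\ref{cor:repulsion}(a)--(c) holds along $\{u_i\}$ with $\kappa = \kappa^*$ and $\xi = \xi^*$. This is the content of those two results applied to the specific $\mathcal{B}$ we just extracted.

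Finally, I want to show that the same estimates survive the replacement of $(\kappa^*, \xi^*)$ by $(\delta_i, \delta_i)$ for $i$ large enough. Choose $i$ so that $\delta_i < \min(\kappa^*, \xi^*)$. Shrinking $\kappa^*$ to $\delta_i$ only weakens the lower bounds in Theorem~\ref{thm:lowerboundsone} and Corollary~\ref{cor:repulsion} (the RHS becomes smaller). Shrinking $\xi^*$ to $\delta_i$ only \emph{increases} the quantities $\mu^i_{\vec{\jmath}}$ and $f^i_{\vec{\jmath}}(r)$ via their $\xi^{-1}\delta_i$ terms, so the same lower bound holds for the new $f^i_{\vec{\jmath}}(r)$. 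Consequently $u_i$ does admit a bubble-tree decomposition obeying all the required estimates with $\kappa = \xi = \delta_i$, contradicting the choice of $u_i$.

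The only potentially delicate point is the final bookkeeping in the third paragraph: because $\xi$ appears inside the max defining $f^i_{\vec{\jmath}}(r)$ and $\mu^i_{\vec{\jmath}}$ on both sides of the inequalities, one must verify monotonicity in $\xi$ case by case, but in every instance the $\xi^{-1}\delta_i$ floor enters additively and only strengthens the conclusion when $\xi$ is decreased. No new analytical input beyond Theorems~\ref{thm:bubbletree}, \ref{thm:lowerboundsone} and Corollary~\ref{cor:repulsion} is needed.
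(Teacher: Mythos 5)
Your proposal is correct and is essentially the paper's proof, which is stated there in one line as "follows by contradiction from the last two results." The only quibble is your monotonicity bookkeeping: since $\mu^i_{\vec{\jmath}}$ also appears on the \emph{right-hand} side of the lower bounds, decreasing $\xi$ does not purely strengthen the conclusion as you claim; one must also use that $\kappa$ is simultaneously being shrunk to $\delta_i \ll \kappa^*$, which absorbs the (bounded) increase of the right-hand side for $i$ large.
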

\begin{proof} This follows by contradiction from the last two results.
\end{proof}

\vspace{10mm}

\section{Main theorems}

The following is our main technical result.

\begin{thm}\label{thm:sequenceloj}
Let $u_i : S^2 \to S^2$ be a bubble-tree convergent almost-harmonic sequence with $E(u_i) \leq 4 \pi k.$ There exist 
constants $C_{\ref{thm:sequenceloj}a-e}$ and $M \geq 1,$ depending on the bubble-tree data $\mathcal{B}$ and $\beta > 0,$ such that for all $i$ sufficiently large, $u_i$ satisfies each of the following statements.

\vspace{2mm}

\noindent (a) Let $\vec{\jmath}$ be a nonzero index and $i \in \N$ sufficiently large. Suppose that there exists 
$\lambda_{j_1, \ldots, j_q} \leq \rho \leq \zeta \lambda_{j_1, \ldots, j_{q-1}}$ such that, letting
\begin{equation*}
W = U^{\rho}_{\rho / 2}(x^i_{\vec{\jmath}} ), \qquad \hat{W} = U^{2\rho}_{\rho/ 4}(x^i_{\vec{\jmath}} ),
\end{equation*}
we have
\begin{equation}\label{sequenceloj:aEassumption} 
E(u, \hat{W} )  < \eps_3,
\end{equation}
where $\eps_3$ is the constant of Theorems \ref{thm:preloj}-\ref{thm:localloj}, and
\begin{equation}\label{sequenceloj:aedbarassumption}
M^{-1} \left(  E_{\p}(u_i, W ) - \rho^2 \| \T(u_i) \|_{L^2(\hat{W} )}^2 \right) \leq E_\dbar(u_i, W ) \leq M \left(  E_{\p}(u_i, W ) + \rho^2 \| \T(u_i) \|_{L^2(\hat{W} )}^2 \right).
\end{equation}
 We then have
\begin{equation}\label{sequenceloj:alojest}
\rho ^{-\beta} E(u_i, W ) + \dist \left( E\left( u_i, D_{2 \rho } \right)  , 4 \pi \Z \right) \leq C_{\ref{thm:sequenceloj}a}  \rho^{1 - \beta} \| \T(u_i) \|.
\end{equation}

\vspace{2mm}

\noindent (b) 
After passing to a subsequence, there exists a subset $J_2 \subset J,$ not containing $(0)$ and with no index preceding any other index in $J_2,$ as well as
$\rho^i_{\vec{\jmath}} \in \LB \lambda^i_{j_1, \ldots, j_q}, \zeta \lambda^i_{j_1, \ldots, j_{q-1}} \RB$ for each $\vec{\jmath} \in J_2$ and $i \in \N,$ such that (\ref{sequenceloj:aEassumption}-\ref{sequenceloj:aedbarassumption}) are satisfied for each $\vec{\jmath} \in J_2$ and $\rho = \rho^i_{\vec{\jmath}},$ and either
\begin{equation*}
E_{\dbar} \left( u_i, S^2 \setminus \cup_{\vec{\jmath} \in J_2} D_{\rho^i_{\vec{\jmath} } } \right) < \eps_3 \quad \text{ or } \quad E_{\p} \left( u_i, S^2 \setminus \cup_{\vec{\jmath} \in J_2} D_{\rho^i_{\vec{\jmath} } } \right) < \eps_3.
\end{equation*}

\vspace{2mm}



\noindent (c) Let $J_2$ and $\left\{\rho^i_{\vec{\jmath}} \mid \vec{\jmath}\in J_2\right\}$ be any collection as in (b). Letting $\lambda_i = \max_{\vec{\jmath} \in J_2} \rho^i_{\vec{\jmath}},$ we have
\begin{equation*}
\dist \left( E(u_i) , 4 \pi \Z \right) \leq C_{\ref{thm:sequenceloj}c} \left( \| \T(u_i) \|^{2} + \lambda_i^{1 - \beta} \| \T(u_i) \| \right).
\end{equation*}

\vspace{2mm}

\noindent (d) Let $L \in \N$ be the integer described in Remark \ref{rmk:L}, and choose $1 < \alpha < \frac{2L+2}{2L + 1}.$ Suppose that the body may $u_\infty$ is nonconstant and holomorphic, 
or that the outermost nonconstant bubbles on each branch are holomorphic. 
Then 
\begin{equation*}
\dist \left( E(u_i) , 4 \pi \Z \right) \leq C_{\ref{thm:sequenceloj}d} \| \T(u_i) \|^{\alpha}.
\end{equation*}
Here $C_{\ref{thm:sequenceloj}d}$ may depend additionally on $\alpha.$

\vspace{2mm}

\noindent (e) Under the same assumption as (d), given any index $\vec{\jmath} \in J_1$ for which the corresponding bubble is nonconstant and antiholomorphic, we have
\begin{equation*}
\left( \lambda^i_{\vec{\jmath}} \right)^{2L + 1 + \beta} \leq C_{\ref{thm:sequenceloj}e} \|\T(u_i) \|.
\end{equation*}
\end{thm}
\begin{proof} We assume that $i$ is sufficiently large that Theorem \ref{thm:lowerboundsone} applies. We then suppress the label $i,$ writing $u = u_i,$ $\Lambda_{\vec{\jmath}} = \Lambda^i_{\vec{\jmath}} ,$ $\delta = \delta_i = \| \T(u_i)\|,$ etc.

Define $J_\p \subset J$ to be the set of all $\vec{\jmath} \in J$ for which
\begin{equation}\label{sequenceloj:EdgeqEdbar}
E_\p \left( u, \hat{\Lambda}_{\vec{\jmath}} \right) \geq E_{\dbar} \left( u, \hat{\Lambda}_{\vec{\jmath}} \right).
\end{equation}
Let $J_\dbar = J_\p^c,$ with $\vec{\jmath} \in J_{\dbar}$ satisfying
\begin{equation}\label{sequenceloj:EdbargeqEd}
E_\p \left( u, \hat{\Lambda}_{\vec{\jmath}} \right) < E_{\dbar} \left( u, \hat{\Lambda}_{\vec{\jmath}} \right).
\end{equation}
We then have
$$J = J_\p \amalg J_\dbar.$$
We first prove ($a$) in the case that $\vec{\jmath} \in J_\p$ and all succeeding indices are also in $J_\p$ (or $\vec{\jmath} \in J_\dbar$ and all succeeding indices are in $J_\dbar$). In this case, for $i$ sufficiently large, we have
$$E_\dbar(u, D_\rho(x_{\vec{\jmath}})) < \eps_3.$$
Then (\ref{sequenceloj:alojest}) follows from the trivial ($\ell = 0$) case of Theorem \ref{thm:localloj}$b.$

We now use ``outward'' induction to establish ($a$) in general. Let $\vec{\jmath}$ be an index for which there exists $\rho$ as in (a), and suppose that the claim has been proven for all indices (and $\rho$'s) succeeding $\vec{\jmath}.$ Suppose without loss of generality that $\vec{\jmath} \in J_\p.$ Let $J' \subset J_{\dbar}$ be the set of all antiholomorphic indices immediately following $\vec{\jmath},$ i.e., indices $(j_1, \ldots, j_p, k_{p + 1}, \ldots, k_n ) \in J_\dbar$ such that $(j_1, \ldots, j_p, k_{p + 1}, \ldots, k_m ) \in J_\p$ for all $p < m < n.$ By Theorem \ref{thm:lowerboundsone}$b,$ for each $\vec{\jmath}' \in J'$ there exists $\rho = \rho^i_{\vec{\jmath}'}$ such that (\ref{sequenceloj:aedbarassumption}) holds. Let
$$\Omega = D_\rho(x_{\vec{\jmath}}) \setminus \cup_{\vec{\jmath}' \in J'} \bar{D}_{\rho_{\vec{\jmath}'}}(x_{\vec{\jmath}'} ).$$
It is clear from the construction that for $i$ sufficiently large, we have
$$E_\dbar(u, \Omega) < \eps_3.$$
We may apply Theorem \ref{thm:localloj}$a,$ to obtain
\begin{equation*}
\rho ^{-\beta} E(u, W ) + \dist \left( E\left( u, \Omega \right)  , 4 \pi \Z \right) \leq C_{\ref{thm:sequenceloj}a}  \rho^{1 - \beta} \| \T(u) \|.
\end{equation*}
Meanwhile, by induction, we also have
\begin{equation*}
\dist \left( E\left( u_i, D_{\vec{\jmath}'} \left( x_{\vec{\jmath}'} \right) \right)  , 4 \pi \Z \right) \leq C_{\ref{thm:sequenceloj}a}  \rho_{\vec{\jmath}'}^{1 - \beta} \| \T(u) \|
\end{equation*}
for each $\vec{\jmath}' \in J'.$ Combining the previous two estimates, we obtain (\ref{sequenceloj:alojest}) for the given $\rho,$ completing the induction and the proof of ($a$).

To obtain the existence of a collection $J_2$ as in ($b$), suppose without loss of generality that $(0) \in J_\p.$ We can take $J_2$ to be the set of all antiholomorphic indices $\vec{\jmath} \in J_{\dbar}$ immediately following $\vec{\jmath}.$ As in the construction of $J'$ above, it is easy to see that this has the required properties.

The estimate of ($c$) now follows by combining the global pre-\L ojasiewicz inequality, Theorem \ref{thm:preloj}$a,$ with (\ref{sequenceloj:alojest}).

The estimate of ($d$) follows in the same fashion, where the estimate (\ref{repulsion:universallowerbound}) allows us to apply Theorems \ref{thm:preloj}$b$-\ref{thm:localloj}$b$ in place of Theorems  \ref{thm:preloj}$a$-\ref{thm:localloj}$a$ in the above argument.

Finally, given an index $\vec{\jmath}$ as in ($e$), there must exist $\rho \leq \lambda^i_{\vec{\jmath}}$ as in ($a$), where we may assume that $\rho \leq \rho_{\vec{\jmath}'}$ for some $\vec{\jmath}' \in J_2.$ The claimed estimate now follows from (\ref{preloj:rhoiestimate}).
\end{proof}

\subsection{Proof of Theorems \ref{thm:loj}-\ref{thm:quantitativealphaloj}} Suppose for contradiction that the estimate of Theorem \ref{thm:loj}$a$ fails. We then have a sequence of maps $u_i$ satisfying the assumptions, but for which
\begin{equation}\label{loja:contrad}
\dist(E(u), 4 \pi \Z ) \geq i \| \T(u_i) \|,
\end{equation}
which implies
\begin{equation*}
\| \T(u_i) \| \leq \frac{4 \pi}{i}.
\end{equation*}
In particular, $u_i$ form an almost-harmonic sequence. But we may then pass to a bubble-tree convergent subsequence and apply Theorem \ref{thm:sequenceloj}$c$ to obtain the estimate
\begin{equation*}
\dist(E(u_i), 4 \pi \Z ) \leq C \| \T(u_i) \|
\end{equation*}
for some constant $C$ and $i$ sufficiently large. This contradicts (\ref{loja:contrad}), establishing the result.

Theorem \ref{thm:loj}$b$ follows by contradiction from Theorem \ref{thm:sequenceloj}$d,$ since $\eps$-regularity (Lemma \ref{lemma:epsreg}$a$) ensures that the body map is nonconstant under the assumptions (\ref{loj:Gammahatassumption}-\ref{loj:Gammaassumption}).

To prove Theorem \ref{thm:quantitativelambdaloj}$a,$ one can argue similarly by contradiction to Theorem \ref{thm:sequenceloj}$c;$ in view of  (\ref{quantitativeloj:edbarsmall}-\ref{quantitativeloj:aassumption}) and the estimates of Theorem \ref{thm:bubbletree}, it is clearly possible to refine the given collection $(U_i, \lambda_i)$ to a collection $J_2$ as in Theorem \ref{thm:sequenceloj}$b.$ 
Meanwhile, Theorem \ref{thm:quantitativealphaloj}$b$ follows by contradiction from Theorem \ref{thm:sequenceloj}$d,$ whose hypotheses are guaranteed by the assumptions (\ref{quantitativeloj:edbarsmall}) and (\ref{quantitativealphaloj:assumption}).

Finally, Corollary \ref{cor:localonebubbleloj} can be obtained from Theorem \ref{thm:quantitativelambdaloj} as follows. By cutting off $u$ using 
Lemma \ref{lemma:epsreg}$a$ on the annulus $B_2 \setminus B_1,$ one can obtain a map $\hat{u} : S^2 \to S^2$ with
$$\| \T(\hat{u})\|^2 \leq C \left( \| \T(u) \|_{L^2(B_2 \setminus B_1)}^2 + E(u, B_2 \setminus B_1) \right)$$
which is constant outside $B_2$ and agrees with $u$ inside $B_1.$ On the annulus $U = B_{2 \lambda} \setminus B_\lambda,$ we either have $E_\p(u, U) \geq \frac12 E_\dbar(u, U)$ or $E_\dbar(u, U) \geq \frac12 E_\p(u, U),$ and the same is true for $\hat{u}.$ In either case, we can apply Theorem \ref{thm:quantitativelambdaloj} with $\ell = 1$ to $\hat{u},$ to obtain the desired estimate.
\qed

\vspace{2mm}

\subsection{Proof of Theorem \ref{thm:flowconv} and Corollary \ref{cor:flowconv}}

Let
\begin{equation*}
\delta(t) = \| \T(u(t) ) \|
\end{equation*}
and
$$\Delta(t) = E(u(t)) - 4\pi n,$$
where $n$ is the unique integer such that
$$E(u(t)) \in \left( 4 \pi \left( n - \frac12 \right) , 4\pi \left(n + \frac12 \right) \RB.$$
To avoid confusion with $\delta(t),$ we use $\delta_0$ during the proof in place of the constant $\delta$ in the statement.

By the energy identity for 2D harmonic map flow \cite{dingtian, linwangenergyidentity}, we know that $E(u(t))$ can only jump by an integer multiple of $4 \pi$ at each singular time. Since $|\Delta(t)| < \delta_0 $ for $0 \leq t < T,$ the function $\Delta(t)$ is in fact continuous and decreasing, and the global energy identity
\begin{equation}\label{Deltaglobalenergy}
\Delta(t_2) + \int_{t_1}^{t_2} \delta(t)^2 = \Delta(t_1)
\end{equation}
is valid. 
Let $0 < T_* \leq T$ be the largest time such that the following hold for $0 \leq t < T_*$:
\begin{equation}\label{flowconv:edbarsmall}
E_{\dbar}\left(u(t), \Omega' \right) \leq 2\eps_0,
\end{equation}
\begin{equation}\label{flowconv:edbarUsmall}
\max_i E \left( u(t), \hat{U}_i' \right) \leq 2\eps_0,
\end{equation}
\begin{equation}\label{flowconv:bassumption}
\frac{\kappa}{2} \leq \min_i E_{\p} \left( u(t), U_i' \right),
\end{equation}
and
\begin{equation}\label{flowconv:DeltaT*est}
- 2 \left(  \delta_0^{\frac{\alpha - 2}{\alpha} } + \frac{2 - \alpha}{\alpha}  (T_* - t) \right)^{\frac{\alpha}{\alpha - 2}}  \leq \Delta(t) \leq 2 \left(  \delta_0^{\frac{\alpha - 2}{\alpha} } + \frac{2 - \alpha}{\alpha} t \right)^{\frac{\alpha}{\alpha - 2}}.
\end{equation}
Assuming that $\delta_0$ is sufficiently small, we will show that each of these estimates holds with strict inequality on $\LB 0, T_* \RB.$ This implies that $T_*$ cannot be maximal unless $T_* =  T,$ which will give us the desired estimates.

The following local energy identity is standard (see e.g. \cite{songwaldron} or \cite{toppingannals}):
\begin{equation}\label{flowconv:localenergy}
\left| \int \left( e_\dbar(u(t_2)) - e_\dbar(u(t_1)) \right) \varphi \, dV \right| \leq C \sup |\nabla \varphi | \sqrt{k} \int_{t_1}^{t_2} \delta(t) \, dt,
\end{equation}
where $\varphi$ is compactly supported. The same identity holds with $e_{\p}$ or $e$ in place of $e_\dbar.$ Let
$$U_i' = U^{\frac{5}{2}\lambda_i}_{\frac{4}{5} \lambda_i} (x_i), \quad \hat{U}_i' = U^{3\lambda_i}_{\frac{2}{3} \lambda_i} (x_i), \quad \Omega' = S^2 \setminus \cup \bar{D}_{\frac{4}{5} \lambda_i}(x_i).$$
We can apply (\ref{flowconv:localenergy}) several times, to obtain:
\begin{equation}\label{flowconv:1stlocalenergy}
\begin{split}
E_\dbar(u(t), \Omega') 
& < \eps_0 + \frac{C \sqrt{k}}{\min \lambda_i^2}  \int_{0}^{t} \delta(t) \, dt,
\end{split}
\end{equation}
\begin{equation}\label{flowconv:2ndlocalenergy}
\begin{split}
E(u(t), \hat{U}_i') 
& < \eps_0 + \frac{C\sqrt{k}}{\min \lambda_i^2}  \int_{0}^{t} \delta(t) \, dt,
\end{split}
\end{equation}
and
\begin{equation}\label{flowconv:3rdlocalenergy}
\begin{split}
E_\p(u(t), U'_i) 
& \geq \kappa - \frac{C\sqrt{k}}{\min \lambda_i^2}  \int_{0}^{t} \delta(t) \, dt.
\end{split}
\end{equation}
We first prove (\ref{flowconv:DeltaT*est}) with coefficient $1$ in place of $2$ on both sides. Since the hypotheses of Theorem \ref{thm:quantitativealphaloj} are satisfied, we have
\begin{equation}\label{flowconv:Deltadeltaloj}
\Delta(t) \leq \delta(t)^\alpha,
\end{equation}
where we may ignore the constant by taking $\delta_0$ sufficiently small. This gives us
$$\frac{d}{dt} \Delta(t) \leq - \Delta(t)^{\frac{2}{\alpha}}$$
as long as $\Delta(t) \geq 0,$ and
$$\frac{d}{dt} \Delta(t) \leq - (-\Delta(t))^{\frac{2}{\alpha}}$$
thereafter. The inequalities follow from Gronwall's Theorem.

Next, we prove strict inequality in (\ref{flowconv:edbarsmall}-\ref{flowconv:bassumption}). By (\ref{Deltaglobalenergy}), we have 
$$\frac{d}{dt} \Delta(t)^{\frac{\alpha - 1}{\alpha}} = -\frac{\alpha - 1}{\alpha} \Delta(t)^{-\frac{1}{\alpha}} \delta^2(t).$$
Together with (\ref{flowconv:Deltadeltaloj}), this gives
\begin{equation}\label{flowconv:deltaL1est}
\begin{split}
\int_{t_1}^{t_2} \delta(t) \, dt = \int_{t_1}^{t_2} \delta(t)^2 \delta(t)^{-1} \, dt & \leq \int_{t_1}^{t_2} \delta(t)^2 \Delta(t)^{-\frac{1}{\alpha}} \, dt \\
& = -\frac{\alpha}{\alpha - 1} \int_{t_1}^{t_2} \frac{d}{dt} \Delta(t)^{\frac{\alpha - 1}{\alpha}} \, dt \\
& \leq \frac{\alpha}{\alpha - 1} \left( \Delta(t_1)^{\frac{\alpha - 1}{\alpha}} - \Delta(t_2)^{\frac{\alpha - 1}{\alpha}} \right).
\end{split}
\end{equation}
Apply (\ref{flowconv:deltaL1est}) with $t_1 = 0$ and $t_2 = T_*,$ to obtain
\begin{equation}\label{flowconv:deltaL1delta0est}
\int_{0}^{T_*} \delta(t) \, dt \leq \frac{2\alpha}{\alpha - 1} \delta_0^{\frac{\alpha - 1}{\alpha} }.
\end{equation}
Provided that $\delta_0$ is sufficiently small, we may conclude from (\ref{flowconv:1stlocalenergy}-\ref{flowconv:3rdlocalenergy}) that strict inequality holds in (\ref{flowconv:edbarsmall}-\ref{flowconv:bassumption}). This implies that in fact $T_* = T,$ so the first estimate of Theorem \ref{thm:flowconv}$a$ now follows from (\ref{flowconv:DeltaT*est}). The second estimate of Theorem \ref{thm:flowconv}$a$ follows from (\ref{flowconv:deltaL1est}) by integrating the harmonic map flow equation in time and applying the $L^2$ norm, then using (\ref{flowconv:deltaL1delta0est}).

To obtain Theorem \ref{thm:flowconv}$b,$ let $u_\infty$ be the body map along any subsequence of times; by (\ref{flowconv:edbarsmall}-\ref{flowconv:bassumption}), this must be nonconstant and holomorphic. The stated $L^2$ estimate follows by letting $t_2 \to \infty$ in (\ref{flowconv:deltaL1est}) and inserting (\ref{flowconv:DeltaT*est}). The $C^k$ improvements follow by combining (\ref{flowconv:deltaL1est}) with standard derivative estimates valid away from the bubbling set $\{z_j\}$ (see e.g. Song-Waldron \cite[\S 3.3]{songwaldron}).

To prove Corollary \ref{cor:flowconv}, suppose that $u_\infty$ is nonconstant, and assume without loss of generality that it is holomorphic. 
Let $\{x_i\} = \{z_i\}$ be the bubbling set along the given sequence $t_i \to \infty,$ put $\lambda_j = \zeta$ for each $j,$ and take $\Omega$ as in Theorem \ref{thm:quantitativealphaloj}-\ref{thm:flowconv}. Since $u(t_i) \to u_\infty$ strongly on $\Omega,$ there exists $\kappa > 0$ such that they hypotheses (\ref{quantitativeloj:edbarsmall}) and (\ref{quantitativealphaloj:assumption}) hold, with $u_0 = u(t_i)$ for $t_i$ sufficiently large. Theorem \ref{thm:flowconv} gives the desired conclusions.

\bibliographystyle{amsinitial}
\bibliography{biblio}

\end{document}